\newcommand{\parte}{\textup{part}_t}
\newcommand{\parteu}{\textup{part}}
\newcommand{\vol}{\textup{vol}_t}
\newcommand{\volu}{\textup{vol}}
\newtheorem{thm}{Theorem}
\newtheorem{lem}[thm]{Lemma}
\newtheorem{cor}[thm]{Corollary}
\newtheorem{question}[thm]{Question}
\newtheorem{remark}[thm]{Remark}
\newtheorem{example}[thm]{Example}
\newtheorem{identity}[thm]{Identity}
\newtheorem*{claim-non}{Claim}
\newcommand{\ignore}[1]{{}}
\newcommand{\defi}[1]{\emph{#1}}
\newcommand{\calP}{\mathcal{P}}
\newcommand{\calR}{\mathcal{R}}
\newcommand{\calL}{\mathcal{L}}
\newcommand{\calQ}{\mathcal{Q}}
\newcommand{\calT}{\mathcal{T}}
\newcommand{\calX}{\mathcal{X}}
\newcommand{\volev}[1]{\textup{vol}_{{#1} \mathrm{\,even}}}
\newcommand{\volod}[1]{\textup{vol}_{{#1} \mathrm{\,odd}}}
\newcommand{\ZZeven}[2]{Z^{{#1} \mathrm{\,even}}_{{#2}}} 
\newcommand{\ZZodd}[2]{Z^{{#1} \mathrm{\,odd}}_{{#2}}} 
\newcommand{\Out}{\mathrm{Out}}
\newcommand{\In}{\mathrm{In}}
\newcommand{\rarr}{\rightarrow}
\newcommand{\floor}[1]{\lfloor{#1}\rfloor}
\newcommand{\Hzz}{\emptyset}
\newcommand{\um}{\mathbbm{1}}
\newcommand{\IR}{\mathbb{R}}
\newcommand{\ZZ}{\mathbb{Z}}
\newcommand{\Mod}[1]{\ (\mathrm{mod}\ #1)}
\title{On the period collapse of a family of \\ Ehrhart quasi-polynomials
\thanks{
   This work was partially supported by
   Conselho Nacional de Desenvolvimento Científico e Tecnológico – CNPq
   (Proc. 423833/2018-9, 308116/2016-0, and~456792/2014-7),
   grants \#2012/24597-3 and \#2013/03447-6,
   São Paulo Research Foundation (FAPESP), and by the grant INSMI-CNRS.}
}
\author{\hspace{1cm} Cristina G.~Fernandes
  \thanks{Instituto de Matem\'atica e Estat\'istica, Universidade de S\~ao Paulo, 05508-090 S\~ao Paulo, Brazil
    (\texttt{cris@ime.usp.br}, \texttt{coelho@ime.usp.br}, \texttt{srobins@ime.usp.br}).}
\and Jos\'e C.~de~Pina \footnotemark[2] \hspace{1cm}
\and Jorge~Luis Ram\'irez~Alfons\'in 
	\thanks{IMAG, Univ.\ Montpellier, CNRS, Montpellier, France and UMI2924 - Jean-Christophe Yoccoz, CNRS-IMPA (\texttt{jorge.ramirez-alfonsin@umontpellier.fr}).} 
\and Sinai Robins \footnotemark[2]
}
\begin{document}
\maketitle

\begin{abstract}
 A graph whose nodes have degree~$1$ or~$3$ is called a $\{1,3\}$-graph. 
  Liu and Osserman associated a polytope  to each $\{1,3\}$-graph and 
  studied the Ehrhart quasi-polynomials of these polytopes.  
  They showed that the vertices of these polytopes have coordinates 
  in the set $\{0,\frac14,\frac12,1\}$, which implies that the period 
  of their Ehrhart quasi-polynomials is either $1, 2$, or $4$.  
  We show that the period of the Ehrhart quasi-polynomial of these polytopes is at most~2 
  if the graph is a tree or a cubic graph, and it is equal to~$4$ otherwise.
  
  In the process of proving this theorem, several interesting combinatorial 
  and geometric properties of these polytopes were uncovered, arising from the structure of their associated graphs.
The tools developed here may find other applications in the study of Ehrhart quasi-polynomials and enumeration problems
for other polytopes that arise from graphs.
  Additionally, we have identified some interesting connections with triangulations of 3-manifolds. 
\end{abstract}

\newpage
\tableofcontents

\section{Introduction}

A \emph{$\{1,3\}$-graph} is a graph whose nodes have degree $1$ or~$3$.
Liu and Osserman~\cite{LiuO2006} associated a polytope $\calP_G$ to each 
$\{1,3\}$-graph~$G$ and studied the Ehrhart quasi-polynomial
arising from $\calP_G$.  
They were mainly motivated by the relation of these quasi-polynomials to 
the study of \emph{dormant torally indigenous bundles on a general curve}, 
objects arising in algebraic geometry~\cite{Mochizuki1996}.  
This connection was further investigated in~\cite{Wakabayashi2013}, and more 
properties of the polytope $\calP_G$ were presented in~\cite{FernandesPRAR2020}.

Specifically, Liu and Osserman~\cite[Theorem~3.9]{LiuO2006} observed that
Mochizuki~\cite{Mochizuki1996} implicitly proved that the value of the 
Ehrhart quasi-polynomial of $\calP_G$ on odd primes is the number of 
dormant torally indigenous bundles in a certain class of curves
parametrized 
by the number of nodes and edges of~$G$.  
Also, they had proved that the coordinates of 
all vertices of~$\calP_G$ are in~$\{0,\frac14,\frac12,1\}$.  
This implies that the period of the Ehrhart quasi-polynomial 
of~$\calP_G$ is either~1 or~2 or~4~\cite{BeckS2009,Ehrhart1977}.
Using a result of Mochizuki~\cite{Mochizuki1996}, they concluded that the
odd constituents of this Ehrhart quasi-polynomial are the same polynomial.  
Liu and Osserman~\cite[Question~4.3]{LiuO2006} then raised 
questions about the period of the Ehrhart quasi-polynomial of~$\calP_G$.  
In this paper, we  answer some of these questions. 


The polytope $\calP_G$ has nice geometric and combinatorial
properties. For instance, Wakabayashi has proved~\cite[Proposition~5.3
  and Corollary~5.4]{Wakabayashi2013} that, for cubic graphs $G_1$ and~$G_2$, 
the polytopes $\calP_{G_1}$ and $\calP_{G_2}$ are isomorphic
(that is, there is an $\IR$-linear bijection $f:\IR^d\rightarrow
\IR^d$ such that $\calP_{G_2} = f(\calP_{G_1})$) if and only if the
graphs $G_1$ and $G_2$ are isomorphic.  Let $T$ be a
$\{1,3\}$-tree. It turns out that $\calP_T$ enjoys even more plenty of
interesting geometric properties that can be combinatorially
described. The latter might provide useful and attractive insights for
appealing questions on {\em 0/1 polytopes} (that is, the convex hull
of subsets of~$\{0,1\}^d$, the vertices of the $d$-cube). All these
will be discussed throughout the paper.
 
Our work is also closely connected with some invariants of $3$-manifolds, 
investigated by Maria and Spreer~\cite{MariaS2016}.   
They associated a linear system of inequalities to a
triangulation~$T$ of a 3-manifold and studied \emph{admissible
  colourings} of the edges of $T$ with the aim to understand better
Turaev-Viro type invariants.  It turns out that admissible
colourings correspond to integer points belonging to a polytope related 
to one of the two central polytopes investigated in this paper.
This relationship as well as an application, with the same topological flavor as in~\cite{MariaS2016},
on a problem concerning non-intersecting closed curves in the plane
will be explained towards the end of the paper.

\subsection{Ehrhart quasi-polynomials and period collapse}

Let $\calL$ be a sublattice of $\ZZ^d$ and fix $u \in \ZZ^d$.
We define the \emph{discrete volume} of a polytope~$\calP$ in~$\IR^d$   
with respect to the coset $\calL+u \subseteq \ZZ^d$ by
\[
  \volu_{\calL+u}(\calP) := |\calP \cap (\calL + u)|.
\]
Here we use the standard notation for 
the translation of any set $Y \subseteq \IR^d$ by a fixed vector $u \in \IR^d$, namely
$Y + u := \{y + u : y \in Y\} \subseteq \IR^d$.
Ehrhart~\cite{Ehrhart1977} introduced the function 
\[
L_{\calP}(t) := \volu_{\ZZ^d}(t\calP), 
\]
which is the number of lattice points in the \emph{dilated polytope} $t\calP := \{ tx: x \in \calP\}$, 
for a nonnegative integer dilation $t$.
Ehrhart showed that if~$\calP$ is an integral polytope, then this function is a polynomial in the integer 
parameter~$t$. 
A \emph{quasi-polynomial} $f(t)$ is a function defined by
a list $p_0,p_1,\ldots,p_{\alpha-1}$ of polynomials
such that
\begin{numcases}{f(t)=}
  p_0(t) & \mbox{if $t \equiv 0 \Mod{\alpha}$,} \nonumber \\
  p_1(t) & \mbox{if $t \equiv 1 \Mod{\alpha}$,} \nonumber \\
  \quad \vdots & \quad \vdots  \nonumber \\
  p_{\alpha-1}(t) & \mbox{if $t \equiv \alpha-1 \Mod{\alpha}$.} \nonumber 
\end{numcases}
The minimal such $\alpha$ is the {\em period} of $f$ and
$p_0,p_1,\ldots,p_{\alpha-1}$ are the \emph{constituents} of $f$.
More generally, Ehrahrt also showed that if~$\calP$ is a rational polytope, 
then the function $L_{\calP}(t)$ is a quasi-polynomial, for integer values of $t$,
whose period divides 
the least common multiple of the denominators in the coordinates 
of the vertices of $\calP$~\cite{BeckS2009,Ehrhart1977}.

If a rational polytope $\calP$ has a quasi-polynomial $L_{\calP}(t)$
whose period is $p$, we also call $p$ the period of $\calP$.  The
denominator of $\calP$ is the minimal integer $\alpha$ such that the
vertices of the dilated polytope $\alpha \calP$ are integral.  For a
`generic' rational polytope, one expects its period to be equal to its
denominator.  Regarding the complexity of computing the periods,
Woods~\cite{Woods2005} has shown that, for fixed dimension $d$, there
is a polynomial-time algorithm which, given a rational polytope $\calP
\subset \mathbb R^d$ and an integer $n > 0$, decides whether~$n$ is a
multiple of the period of the quasi-polynomial $L_{\calP}(t)$.

When we have a rational polytope whose period is smaller than its
denominator, we refer to this situation as \emph{period collapse}.
Usually,
researchers~\cite{BeckSamWoods2008,McAllisterRochais2018,McAllisterW2017,Woods2005,Woods2015}
prove that the phenomenon of period collapse exists for some particular
rational polytope $\calP$ by exhibiting a decomposition of $\calP$
into rational simplices, and then applying a (different) unimodular
affine linear transformation to each simplex.
If, somewhat magically, the reassembling of all of the images of
these simplices form an integral polytope, then the Ehrhart quasi-polynomial of $\calP$ is in
fact a polynomial.  It is a well-known open problem whether it is
always possible to carry out this process, for any rational polytope
that possesses period collapse~\cite{Haase2008}.
 
Here we develop a different technique for proving period collapse,
building on an idea of Liu and Osserman.  Namely, rather than
decomposing the object into smaller polytopes, and using special types
of unimodular transformations for each of them, we instead decompose
the integer lattice into a certain sublattice $\cal L$, together with
all of its cosets in the integer lattice.  Then, for each fixed coset
of $\cal L$, we count the number of points of $\cal L$ that belong to
$\calP$, 
and show that this number is in bijection with the number of integer
points that belong to another, naturally-defined, integral polytope.
This novel technique seems interesting in itself and might be useful in a wider context.

In this research, we have made extensive use of both of the following software packages: LattE~\cite{latte}, and \texttt{polymake}~\cite{polymake_XML:ICMS_2016, polymake:ICMS_2006}.

\subsection{Liu and Osserman's polytopes}

Let $G$ be a $\{1,3\}$-graph.  We say that a node of degree~1 is a
\emph{leaf node} and a node of degree~3 is an \emph{internal node}.  We denote by
$V(G)$, $E(G)$, and~$I(G)$ the set of nodes, the set of edges, and
the set of internal nodes of $G$, respectively. If the graph is
clear from the context we write simply $V$, $E$, and $I$.
A subgraph $H$ of $G$ is \emph{internally Eulerian} 
if the degree in $H$ of every internal node
of~$G$ is equal to zero or two.
In particular, the empty subgraph is internally Eulerian. 

If $X$ is a finite set, then we denote by  
$\um_S : X \rarr \{0,1\}$ the characteristic vector of a set~$S \subseteq X$.
If $S = \{e\}$ for some $e$ in $X$, we write $\um_e$.
If $S = E(H)$ for some graph $H$, we write $\um_H$.
We allow all of our graphs to have loops and parallel edges.

Liu and Osserman~\cite{LiuO2006} associated to each $\{1,3\}$-graph $G$ a
polytope $\calP_G$ in $\IR^E$  as follows.
For each internal node $v$ of $G$, let $a$,~$b$, and $c$ be the three edges incident to $v$.  
Denote by~$S^\triangle(v)$ the linear system of triangle inequalities defined on the variables 
$w_a$, $w_b$, and~$w_c$ as follows:
\begin{eqnarray*}
  w_a & \leq & w_b + w_c \\
  w_b & \leq & w_a + w_c \\
  w_c & \leq & w_a + w_b \, .
\end{eqnarray*}
From $S^\triangle(v)$, one can derive that $w_a$, $w_b$, and $w_c$ are nonnegative. 
%
%
We denote by $S^{\calP}_t(v)$ the linear system of inequalities, at each internal node~$v$ 
of~$G$, resulting from imposing to~$S^\triangle(v)$ the additional \emph{perimeter inequality}
\begin{equation}\label{eq:t}
  w_a + w_b + w_c \ \leq \ t \, .
\end{equation}
Now, consider the union of all the linear systems~$S^{\calP}_1(v)$, 
taken over all internal nodes~$v$ of~$G$.  
Add the constraint $0 \leq w_e \leq 1/2$ 
for every edge $e$ in $G$ that alone is a component of~$G$.  
The set~$\calP_G$ consists of all real solutions 
for this linear system~\cite[Definition~2.3]{LiuO2006}.
These last two constraints imply that $\calP_G$ is a polytope, and we denote its Ehrhart quasi-polynomial by
\begin{equation}
L^{\calP}_G(t) := \volu_{\ZZ^d}(t\calP_G),
\end{equation}
where $d = |E(G)|$.

If~$G$ has connected components $H$ and $J$, 
then $\calP_G$ would be the Cartesian product of~$\calP_H$ and~$\calP_J$, 
and thus $L^{\calP}_G(t) = L^{\calP}_H(t) \cdot L^{\calP}_J(t)$ for all $t$.
Therefore, henceforth we assume that~$G$ is a connected graph.

In order to derive properties of $L^{\calP}_G$ for a $\{1,3\}$-graph $G$, Liu
and Osserman considered a polytope $\calQ_G$ in $\IR^{E}\times\IR^{I}$
closely related to $\calP_G$, where $I$ is the set of internal nodes of~$G$.  
To define $\calQ_G$, for each internal node $v$ of~$G$, 
with $a$,~$b$, and~$c$ being the three edges incident to~$v$,
consider an auxiliary variable~$z_v$ and let~$S^{\calQ}_t(v)$ be the previous linear system
$S^\triangle(v)$, together with the additional \emph{parity constraint}:
\begin{eqnarray}
  w_a + w_b + w_c & = & 2\,z_v  \label{eq:parity}\\
             z_v & \leq & t \, . \nonumber
\end{eqnarray}
We now consider the union of all the linear systems~$S^{\calQ}_1(v)$, 
taken over all internal nodes~$v$ of~$G$.
We add the constraint $0 \leq w_e \leq 1$ for every edge $e$ in $G$ that alone is a component of~$G$.
The polytope $\calQ_G$ consists of the solutions 
for this linear system~\cite[Definition~3.1]{LiuO2006}. 
For each nonnegative integer $t$, the number of integer points in the polytope $t\calQ_G$ 
is  denoted by $L^{\calQ}_G(t)$, the Ehrhart quasi-polynomial in $t$, associated to the polytope $\calQ_G$.
That is,
\begin{equation}
L^{\calQ}_G(t) := \volu_{\ZZ^d}(t\calQ_G),
\end{equation}
where $d = |E| + |I|$.  Figure~\ref{fig:involution} shows an example.

\begin{figure}[htb]
\begin{center}
\begin{minipage}[h]{0.5\textwidth}
  \scalebox{.9}{\includegraphics{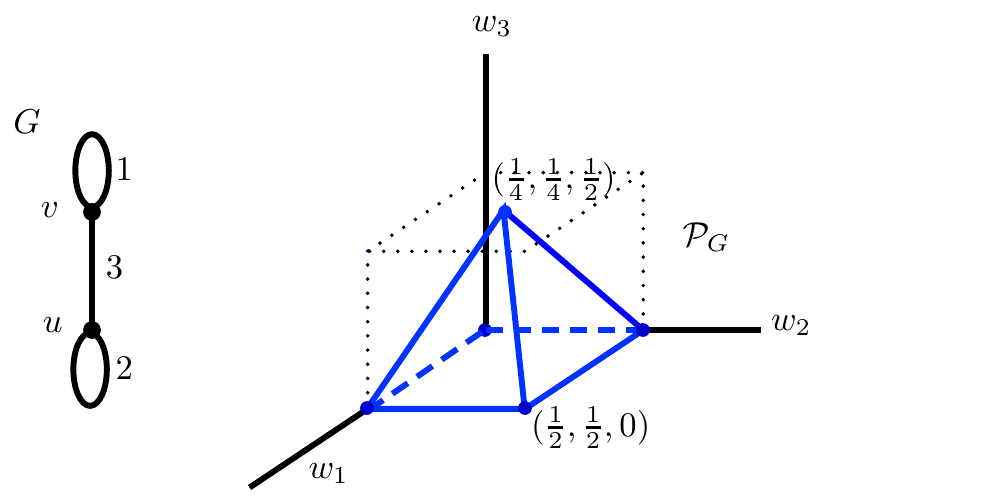}}
  \begin{align}
   L^{\calP}_G(t) & =  \frac{1}{24} t^3 + \frac{1}{4} t^2 + \,
    \left\{\begin{array}{ll}
             \frac{5}{6} t + 1,   & \mbox{if $t$ is even} \\[1mm] 
             \frac{11}{24} t + \frac{1}{4}, & \mbox{if $t$ is odd}
           \end{array}\right. \nonumber\\
   L^{\calQ}_G(t) & = \frac16 t^3 + t^2 + \frac{11}6 t + 1 \nonumber
\end{align}
\end{minipage}
\begin{minipage}[h]{0.4\textwidth}
{\small
  \[
    t\calP_G = \left\{ 
    \begin{array}{rcl}
      w_3 & \le & 2w_1 \\
      w_3 & \ge & 0 \\
      2w_1 + w_3 & \le & t \\
      w_3 & \le & 2w_2 \\
      2w_2 + w_3 & \le & t \\
    \end{array} \right.\\
\]
\[
  t\calQ_G = \left\{
    \begin{array}{rcl}
      w_3 & \le & 2w_1 \\
      w_3 & \ge & 0 \\
      2w_1 + w_3 & = & 2z_v \\
      z_v & \le & t \\
      w_3 & \le & 2w_2 \\
      2w_2 + w_3 & = & 2z_u \\
      z_u & \le & t
    \end{array} \right.
  \]
}
\end{minipage}
\end{center}
\caption{
  A cubic graph $G$, its polytope $\calP_G$, the linear systems of $t\calP_G$
  and $t\calQ_G$,
  and the Ehrhart quasi-polynomials $L^{\calP}_G(t)$ and~$L^{\calQ}_G(t)$~\cite{latte}.
  The polytope $\calQ_G$ lies in~$\IR^3 \times \IR^2$.
  The points in $t\calP_G$ have coordinates~$(w_1,w_2,w_3)$ and the points in $t\calQ_G$
  have coordinates~$((w_1,w_2,w_3),(z_v,z_u))$.}
\label{fig:involution}
\end{figure}

\begin{example}
For the graph $G$ in Figure~\ref{fig:involution},
$L^{\calP}_G(t)$ has period~2 and $L^{\calQ}_G(t)$ has period~1, therefore the number of
integer points in the polytope~$\calP_G$ is $L^{\calP}_G(1) =
\frac{1}{24} + \frac{1}{4} + \frac{11}{24} + \frac{1}{4} = 1$ and the
number of integer points in the polytope
$\calQ_G$ is $L^{\calQ}_G(1) = \frac16 + 1 + \frac{11}6 + 1 = 4$.
Indeed, the unique integer point in~$\calP_G$ is $(0,0,0)$ and the integer points in~$\calQ_G$ are
$((0, 0, 0), (0, 0))$, $((0, 1, 0), (0, 1))$, $((1, 0, 0), (1, 0))$, and $((1, 1, 0), (1, 1))$.
\hfill $\square$\\[1mm]
\end{example}

We denote the constituent polynomials of $L^{\calP}_G(t) $ by $p_0(t), p_1(t), p_2(t)$, and $p_3(t)$,
where 
\[
p_k(t) = L^{\calP}_G(t), \  \text{for } t \equiv k \Mod{4}\,.
\]
If  $L^{\calP}_G(t)$   has period~1, then $p_0 = p_1 = p_2 = p_3$.
If~$L^{\calP}_G(t)$ has period~2, then~$p_0 = p_2$ and $p_1 = p_3$.
Similarly, we denote by~$q_0(t)$ and~$q_1(t)$ the constituent polynomials of~$L^{\calQ}_G(t)$,
where 
\[
q_k(t) = L^{\calQ}_G(t),  \text{ for }  t \equiv k \Mod{2}\,.
\]
If~$L^{\calQ}_G(t)$ has period~1, then $q_0= q_1$.

\begin{example}
For the graph $G$ in Figure~\ref{fig:involution}, we have that 
$$
p_0(t) = p_2(t) = \frac{1}{24} t^3 + \frac{1}{4} t^2 + \frac{5}{6} t + 1 \mbox{ \ and \ }
p_1(t) = p_3(t) = \frac{1}{24} t^3 + \frac{1}{4} t^2 + \frac{11}{24} t + \frac{1}{4},
$$ 
so that here $L^{\calP}_G(t)$ has period $2$.
The quasi-polynomial $L^{\calQ}_G(t)$ has period~1 because 
$$q_0(t)= q_1(t)= \frac16 t^3 + t^2 + \frac{11}6 t + 1\;.$$
The vertices of $\calQ_G$ are its integer points and the point
$((\frac{1}{2}, \frac{1}{2}, 1), (1, 1))$.
The least common multiple of denominators in the coordinates 
of the vertices of~$\calP_G$  is~$4$ and of~$\calQ_G$ is~$2$, giving us examples of period collapse.
\hfill $\square$\\[1mm]
\end{example}

\subsection{Hightlighting context of our main results}

Liu and Osserman~\cite[Proposition~3.5]{LiuO2006} proved that, in general, the coordinates of all
vertices of~$\calP_G$ are in~$\{0,\frac14,\frac12,1\}$, while the coordinates
of all vertices of $\calQ_G$ are in $\{0,\frac12,1\}$.  Therefore, the period
of $L^{\calP}_G(t)$ is either~1 or~2 or~4 and the period of $L^{\calQ}_G(t)$ is either~1 or~2.
In particular, Liu and Osserman~\cite[Question~4.3]{LiuO2006} posed some questions 
related to the period of the Ehrhart quasi-polynomials $L^{\calP}_G(t)$ and $L^{\calQ}_G(t)$,
restated as follows.

\begin{question}[{\cite[Question~4.3]{LiuO2006}}]\label{qst:liu} {\ }
  \begin{enumerate}[{\normalfont (a)}] 
  \item Is it true that if $G$ is cubic then the period of 
    the Ehrhart quasi-polynomial $L^{\calP}_G(t)$ is~2?\label{qst:liu:a}
  \item For which $\{1,3\}$-graphs $G$ is the period of $L^{\calP}_G(t)$ smaller than 
    the least common multiple of the denominators of the vertices
    of~${\calP}_G$? \label{qst:liu:b}
  \item Is the period of the Ehrhart quasi-polynomial $L^{\calQ}_G(t)$ always half 
    the period of $L^{\calP}_G(t)$ for every $\{1,3\}$-graph~$G$? \label{qst:liu:c}
  \end{enumerate}  
\end{question}  

This paper gives a partial answer to some of these questions.  
We prove the following.

\begin{thm}[the period for $\{1,3\}$-trees]\label{thm:period2trees}
  If $T$ is a $\{1,3\}$-tree, 
  then $L^{\calP}_T(t)$ has period~2 and $L^{\calQ}_T(t)$ has period 1.
\end{thm}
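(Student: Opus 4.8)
The plan is to prove both assertions by a bottom-up (``transfer'') computation along $T$: root $T$ at a leaf and, sweeping from the leaves inward, attach to each edge a lattice-point count whose dependence on $t$ is shown to be polynomial --- outright for $\calQ_T$, and on each residue class modulo $2$ for $\calP_T$. Concretely, fix a leaf $r$ of $T$; if $T$ is a single edge both claims are immediate (the polytopes are $\{0\le w\le t/2\}$ and $\{0\le w\le t\}$), so assume $T$ has an internal node. For an edge $e$, let $T_e$ be the subtree formed by $e$ together with the component of $T-e$ not containing $r$; if the endpoint $v$ of $e$ farther from $r$ is internal, its two other edges are the \emph{children} $l,r$ of $e$ (leaf edges have no children). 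For $x\in\ZZ_{\ge 0}$, let $f_e(x)$ be the number of $w\in\ZZ_{\ge 0}^{E(T_e)}$ with $w_e=x$ such that at every internal node of $T_e$, with incident edges $a,b,c$, the triangle inequalities hold, $w_a+w_b+w_c$ is even (so $z_v:=\tfrac12(w_a+w_b+w_c)\in\ZZ$), and $w_a+w_b+w_c\le 2t$ (i.e.\ $z_v\le t$). Since the $z$-coordinates of an integer point of $\calQ_T$ are determined by its $w$-coordinates, $L^{\calQ}_T(t)=\sum_{x\ge 0}f_{e_r}(x)$, where $e_r$ is the edge at $r$.

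For a leaf edge $f_e\equiv 1$, and if $e$ has internal far endpoint $v$ with children $l,r$ then, separating the constraint at $v$ from the disjoint subtrees $T_l,T_r$,
\[
  f_e(x)\ =\!\!\sum_{\substack{w_l,w_r\ge 0,\ |w_l-w_r|\le x\\ x\le w_l+w_r\le 2t-x,\ \ w_l+w_r\equiv x\Mod{2}}}\!\! f_l(w_l)\,f_r(w_r) .
\]
The key lemma is: \emph{for every edge $e$ there is a polynomial $P_e(x,t)$ with $f_e(x)=P_e(x,t)$ for all $0\le x\le t$} (and $f_e(x)=0$ for $x>t$ unless $e$ is a leaf edge), and I would prove it by induction from the leaves inward. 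In the recursion, adding $|w_l-w_r|\le x$ to $w_l+w_r\le 2t-x$ forces $w_l,w_r\in[0,t]$, so the inductive hypothesis allows replacing $f_l,f_r$ by $P_l,P_r$. Setting $u=w_l+w_r$ and $d=w_l-w_r$, the constraints become $x\le u\le 2t-x$, $u\equiv x\Mod{2}$, $|d|\le x$, $d\equiv u\Mod{2}$ (the conditions $|d|\le u$ and $u+|d|\le 2t$ being then automatic); as $x$ and $2t-x$ have the same parity, the parameters $u=x+2j$ ($0\le j\le t-x$) and $d=x-2i$ ($0\le i\le x$) run over genuine integer intervals, and the sum becomes
\[
  f_e(x)\ =\ \sum_{j=0}^{t-x}\ \sum_{i=0}^{x}\ P_l(x+j-i,\,t)\,P_r(j+i,\,t),
\]
a polynomial in $(x,t)$ because $\sum_{k=0}^{N}k^{m}$ is a polynomial in $N$ (for $x>t$ the index set is empty). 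Hence $L^{\calQ}_T(t)=\sum_{x=0}^{t}P_{e_r}(x,t)$ is a polynomial in $t$, so $L^{\calQ}_T(t)$ has period $1$.

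For $\calP_T$ one runs the same scheme with the parity condition dropped and $2t$ replaced by $t$: $g_e\equiv 1$ on leaf edges, $g_e(x)=\sum_{w_l,w_r\ge 0,\ |w_l-w_r|\le x,\ x\le w_l+w_r\le t-x} g_l(w_l)\,g_r(w_r)$ otherwise, and $L^{\calP}_T(t)=\sum_{x\ge 0} g_{e_r}(x)$. Here the constraints force $w_l,w_r,x\in[0,\lfloor t/2\rfloor]$, so, fixing the parity of $t$, the same induction shows that each $g_e(x)$ agrees, on $0\le x\le\lfloor t/2\rfloor$, with a polynomial in $(x,t)$; the only new feature is that $u=w_l+w_r$ now carries no parity restriction, so one splits its range $x\le u\le t-x$ into the two progressions $u\equiv x$ and $u\not\equiv x\Mod{2}$ (taking $d=x-2i$, resp.\ $d=(x-1)-2i$), each with endpoints that --- for $t$ of fixed parity --- are integer-valued affine functions of $x$ and $t$. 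Consequently $L^{\calP}_T(t)=\sum_{x=0}^{\lfloor t/2\rfloor}g_{e_r}(x)$ is a polynomial in $t$ on each class modulo $2$, so the period of $L^{\calP}_T(t)$ divides $2$. That it equals $2$ follows from a small computation: for $T=K_{1,3}$ (where $\dim\calP_T=3$) one gets $L^{\calP}_T(t)=1,1,4,5,11$ at $t=0,\dots,4$, with nonzero fourth finite difference, so $L^{\calP}_T$ is not a polynomial; alternatively, carry out the argument only for even $t$ and quote Liu and Osserman's observation (from Mochizuki) that the two odd constituents of $L^{\calP}_T$ coincide.

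The step I expect to be the main obstacle is this parity bookkeeping inside the induction: one has to verify that after the change of variables $(w_l,w_r)\mapsto(u=w_l+w_r,\ d=w_l-w_r)$ every residue restriction cuts out an arithmetic progression whose endpoints are integer-valued affine functions of the parameters $x$ and $t$, so that ``sum of a polynomial over the feasible region'' is a genuine polynomial and not a proper quasi-polynomial --- which is exactly where it matters that for $\calQ_T$ the bound $2t$ makes $x$ and $2t-x$ equal in parity, and that for $\calP_T$ one stays within one parity class of $t$. A secondary point is to check that the recursion only ever evaluates the child functions on the interval where the inductive hypothesis has already produced a polynomial ($[0,t]$, resp.\ $[0,\lfloor t/2\rfloor]$); this is what the triangle inequality $|w_l-w_r|\le x$ combined with the perimeter bound delivers.
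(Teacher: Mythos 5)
Your transfer (dynamic-programming) argument for the two period statements ``period $1$ for $L^{\calQ}_T$'' and ``period dividing $2$ for $L^{\calP}_T$'' is essentially sound and takes a genuinely different route from the paper. The paper determines the vertex sets explicitly: the vertices of $\calP_T$ are the points $\frac12\um_H$ for collections $H$ of disjoint leaf-paths (so $\calP_T$ is half-integral) and $\calQ_T$ is an integral $0/1$ polytope; Ehrhart's theorem then gives the period bounds at once. You instead never touch the vertices and compute the counting functions directly by sweeping from the leaves inward, showing by induction that each partial count $f_e(x)$ (resp.\ $g_e(x)$, on a fixed parity class of $t$) agrees with a polynomial in $(x,t)$ on the feasible range, the crucial points being exactly the ones you flag: the triangle plus perimeter constraints confine the child variables to the interval where the inductive hypothesis applies, and after the substitution $u=w_l+w_r$, $d=w_l-w_r$ every residue condition cuts out a progression whose endpoint parameters are integer-valued affine in $x$ and $t$ (and never drop below $-1$), so Faulhaber-type summation stays polynomial. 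Your approach yields less geometric information (no description of the vertices, hence none of the consequences in Section~\ref{sec:furtherconnexions}), but it is more self-contained and makes the period collapse of $\calQ_T$ visible without proving integrality.

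There is, however, a genuine gap in the claim that the period of $L^{\calP}_T(t)$ is \emph{exactly}~$2$. Your finite-difference computation ($1,1,4,5,11$ with nonzero fourth difference) establishes $p_0\neq p_1$ only for the single tree $T=K_{1,3}$; the theorem asserts it for every $\{1,3\}$-tree, and nothing in your argument excludes the possibility that for a larger tree the two constituents coincide and the quasi-polynomial collapses to period~$1$. (The ``alternative'' you mention --- doing the even case and quoting that the two odd constituents agree --- again only bounds the period by $2$; it does not separate $p_0$ from $p_1$.) The paper closes this for all trees by comparing constant terms: $p_0(0)=L^{\calP}_T(0)=1$, while Liu and Osserman's relation $q_1=N_T\,p_1$ together with $q_1(0)=L^{\calQ}_T(0)=1$ (which uses the period-$1$ statement you have already proved) forces $p_1(0)=1/N_T\le\frac12$, so $p_0\neq p_1$. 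You could finish your proof the same way with no extra machinery, since you already know $L^{\calQ}_T$ is a polynomial taking the value $1$ at $t=0$; as it stands, though, the exactness claim is only verified for one tree.
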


Theorem \ref{thm:period2trees}  is related to 
Questions~\ref{qst:liu}\eqref{qst:liu:b} and~\ref{qst:liu}\eqref{qst:liu:c}.

\begin{thm}[the period for $\{1,3\}$-graphs]\label{thm:main}
If $G$ is a connected $\{1,3\}$-graph then the period of the Ehrhart quasi-polynomial
$L^{\calP}_G(t)$  associated to the polytope $\calP_G$
is at most~2 if~$G$ is a tree or a cubic graph, and it is equal to~$4$ otherwise.
\end{thm}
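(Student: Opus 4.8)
The plan is to treat the three regimes of the trichotomy separately, noting that a connected $\{1,3\}$-graph with no leaf node is $3$-regular and one with no cycle is a tree, so ``neither a tree nor cubic'' means precisely that $G$ has at least one leaf node and at least one cycle. The tree case is nothing new: Theorem~\ref{thm:period2trees} already gives that $L^{\calP}_T(t)$ has period~$2$. For the other two cases I would first normalise the target. Liu and Osserman, using Mochizuki's theorem, showed that the two odd constituents of $L^{\calP}_G$ always coincide; hence the period of $L^{\calP}_G$ lies in $\{1,2,4\}$, it is at most~$2$ exactly when $p_0 = p_2$, and it equals~$4$ exactly when $p_0 \neq p_2$. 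Moreover $p_0 = p_2$ holds if and only if $s \mapsto L^{\calP}_G(2s)$ is a polynomial in $s$, equivalently the dilate $2\calP_G$ has Ehrhart period~$1$. So it remains to prove $p_0 = p_2$ when $G$ is cubic and $p_0 \neq p_2$ otherwise.

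For the cubic case I would use the lattice-decomposition technique announced in the introduction. To each integral edge weighting $w \in \ZZ^E$ assign the parity vector $\big(\sum_{e \ni v} w_e \bmod 2\big)_{v \in I}$; for connected cubic $G$ this map carries $\ZZ^E$ onto the even-sum subspace of $\mathbb F_2^{I}$ (loops map to $0$, and the edges of a spanning tree supply the remaining dimensions), so its kernel $\calL$ is a full-rank sublattice of $\ZZ^E$ of index $2^{|I|-1}$. Writing $\ZZ^E = \bigsqcup_\sigma \calL_\sigma$ for the $2^{|I|-1}$ parity cosets gives, for every nonnegative integer $s$,
\[
  L^{\calP}_G(2s) \;=\; \sum_\sigma \big|\, 2s\calP_G \cap \calL_\sigma \,\big|.
\]
The core of the argument is that each summand is a polynomial in $s$. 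For the trivial coset this is essentially the definition of $\calQ_G$: an integer point of $s\calQ_G$ is a pair $(w,z)$ with $w$ obeying the triangle systems and $\sum_{e \ni v} w_e = 2z_v \le 2s$, so forgetting $z$ identifies the integer points of $s\calQ_G$ with $2s\calP_G \cap \calL_{\mathbf 0}$, whence $\big|\,2s\calP_G \cap \calL_{\mathbf 0}\,\big| = L^{\calQ}_G(s)$. For a general coset one performs the analogous substitution -- writing each node sum as $2z_v$ plus a fixed $0/1$ correction prescribed by $\sigma$ -- which realises $\big|\,2s\calP_G \cap \calL_\sigma\,\big|$ as the number of lattice points of a ``$\calQ$-like'' polytope $\calQ_{G,\sigma}$. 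It then suffices to show that every $\calQ_{G,\sigma}$ is integral, so that each summand is a true Ehrhart polynomial and their sum $L^{\calP}_G(2s)$ is a polynomial in $s$; this yields $p_0 = p_2$. I expect the integrality of the family $\{\calQ_{G,\sigma}\}$ to be the main obstacle here, to be handled by the same total-unimodularity-flavoured analysis of the triangle systems that yields Liu and Osserman's $\{0,\tfrac12,1\}$ bound on the vertices of $\calQ_G$, now run for the parity-shifted systems.

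For the remaining case -- $G$ connected with a leaf node and a cycle -- I would run the same decomposition but produce a summand that is genuinely non-polynomial. The combinatorics now differs in a decisive way: a pendant edge contributes the single basis vector at the internal neighbour of its leaf, so the internal-node parity map becomes surjective onto \emph{all} of $\mathbb F_2^{I}$ and $\calL$ has index $2^{|I|}$ rather than $2^{|I|-1}$; the extra cosets are where the obstruction lives. Using the combinatorial description of the vertices of $\calP_G$, one locates a pendant edge $e_0$ at an internal node $v$ whose two other edges lie on a common cycle -- the configuration impossible in trees and in cubic graphs -- which produces a vertex of $\calP_G$ with a coordinate equal to $\tfrac14$. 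The claim to prove is that for a suitable coset $\sigma^\ast$ the polytope $\calQ_{G,\sigma^\ast}$ has a $\tfrac12$-vertex whose contribution does \emph{not} collapse, so that $s \mapsto \big|\,2s\calP_G \cap \calL_{\sigma^\ast}\,\big|$ is a quasi-polynomial of period exactly~$2$, and that this is not cancelled by the remaining summands -- forcing $p_0 \neq p_2$, hence period~$4$. The non-cancellation is the hard part, and I would attack it by a reduction: starting from an arbitrary connected non-tree non-cubic $G$, apply local moves (deleting or suppressing portions of $G$ away from the chosen pendant-on-a-cycle gadget) that provably preserve the strict inequality $p_0 \neq p_2$ and terminate at a small explicit base graph, for which the inequality is verified by a direct lattice-point count and cross-checked with \texttt{LattE}. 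As in every period-collapse problem the two difficulties are dual -- showing that the collapse happens where it should (the cubic case) and that it fails where it should not (this case) -- and I expect the bulk of the combinatorial work to sit in the latter.
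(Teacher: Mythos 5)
Your normalization is fine (the tree case via Theorem~\ref{thm:period2trees}, and the reduction of the remaining cases to deciding whether $p_0=p_2$, using Liu--Osserman's $p_1=p_3$), and your coset decomposition of $2s\calP_G\cap\ZZ^E$ by node-sum parities is a legitimate and genuinely different starting point from the paper's, which instead partitions the integer points of $t\calQ_G$ by edge parities (i.e.\ by internally Eulerian subgraphs) and reduces to caterpillars $G_{h,k}$ by weighted NNI moves. But your cubic case has a genuine gap: the sufficient condition you propose --- integrality of every $\calQ_{G,\sigma}$ --- fails already for the trivial coset, because $\calQ_{G,\mathbf 0}$ is exactly $\calQ_G$, and $\calQ_G$ is \emph{not} integral for cubic graphs: the paper's own example exhibits the vertex $((\frac12,\frac12,1),(1,1))$ of $\calQ_G$ for the two-node cubic graph, and Liu--Osserman only give the $\{0,\frac12,1\}$ bound, not integrality. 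So ``each summand is a true Ehrhart polynomial because its polytope is integral'' cannot be the argument; what you actually need is that each coset count is a polynomial \emph{despite} half-integral vertices, i.e.\ a fresh period-collapse statement. For the trivial coset this statement is precisely ``$L^{\calQ}_G$ has period~1 for cubic $G$'', which the paper never proves and cannot decide (it only shows the periods of $L^{\calP}_G$ and $L^{\calQ}_G$ differ). Thus your plan replaces the theorem by an intermediate claim that is at least as hard, and possibly false summand-by-summand, unless you argue cancellation across cosets --- which you do not.

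The non-cubic, non-tree case is also not yet a proof: locating a pendant edge on a cycle and a ``bad'' coset is plausible, but the non-cancellation you defer is the entire difficulty, and you give no mechanism for why your local moves would preserve $p_0\neq p_2$; a base-case check plus unspecified reductions does not close this. The paper's route is instructive here. After reducing to the caterpillars $G_{h,k}$ (Wakabayashi's theorem plus Lemma~\ref{lem:samepolynomials}), it proves two exact identities valid for all even $t$ --- the leaf-path lemma (Lemma~\ref{lem:summationleafs}) and the loop lemma (Lemma~\ref{lem:summationloops}) --- which express $q_0^{h,k}$ once through the polynomials $p_0^{h-j,k}$ and once through the $p_2^{h-j,k}$; subtracting yields a triangular recurrence in the differences $p_0^{h-j,k}-p_2^{h-j,k}$, which is then inverted by a Chebyshev-type inverse relation to give an explicit, strictly positive formula for $p_0^{h,k}-p_2^{h,k}$. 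In particular the true argument is not an induction that ``preserves $p_0\neq p_2$'' under removing a gadget: the differences for all smaller numbers of leaves enter with alternating signs, and positivity only emerges after the explicit inversion. If you want to pursue your decomposition, you would need quantitative analogues of those two lemmas for your cosets $\calQ_{G,\sigma}$ (exact values of the coset counts, or of their pairwise differences, as polynomials in $t$), not just structural claims about vertices.
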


Theorem \ref{thm:main} answers Question~\ref{qst:liu}\eqref{qst:liu:a} 
and is one of the main contributions of this paper. 
Liu and Osserman~\cite[Lemma~3.3]{LiuO2006} proved that the constituent 
polynomials $p_1=p_3$ by showing that, for every $\{1,3\}$-graph $G$, 
$q_1(t) = N_G\,p_1(t) = N_G\,p_3(t)$, where~$N_G$ is the number of internally 
Eulerian subgraphs of $G$.

\begin{example}
For the graph $G$ in Figure~\ref{fig:involution}, the internally Eulerian
subgraphs are induced by the edge sets $\emptyset$, $\{1\}$, $\{2\}$, 
and $\{1,2\}$, so $N_G=4$, and we have
$$q_1(t) \ = \ \frac16 t^3 + t^2 + \frac{11}6 t + 1 
         \ = \ N_G\, p_1(t) 
         \ = \ 4\, \big(\frac{1}{24} t^3 + \frac{1}{4} t^2 + \frac{11}{24} t + \frac{1}{4}\big)\;. \\[-9mm]
         $$
\hfill $\square$\\[1mm]
\end{example}         

Their proof \cite[Lemma~3.3]{LiuO2006} is based on a partition, for every nonnegative odd integer~$t$,
of the~{$L^{\calQ}_G(t)=q_1(t)$} integer points of the polytope $t\calQ_G$
into~$N_G$ parts of size~$L^{\calP}_G(t)$.
So, answering Question~\ref{qst:liu}\eqref{qst:liu:a} 
boils down to deciding whether we have equality between the two constituent polynomials $p_0$ and  $p_2$ for cubic graphs. 

We answer Question~\ref{qst:liu}\eqref{qst:liu:a} positively by also presenting, 
for every cubic graph $G$ and nonnegative even integer $t$, 
a partition of the~$L^{\calQ}_G(t)=q_0(t)$ integer points of the polytope~$t\calQ_G$.
This partition has $N_G$ parts,
one part of size~$L^{\calP}_G(t)$
and~$N_G-1$ parts of
size~$L^{\calP}_G(t) - (\frac{t}{2}+1)^{k-1}$,
where~$m$ and $n$ are the number of edges and nodes of~$G$, respectively,
and $k=m-n+1$ is the \defi{cyclomatic number} of $G$.
From this it follows that 
$$q_0(t) \ = \ N_G\, p_0(t) - (N_G-1)\,\big(\frac{t}{2}+1\big)^{k-1} 
           = \ N_G\, p_2(t) - (N_G-1)\,\big(\frac{t}{2}+1\big)^{k-1}\,,$$
implying that $p_0 = p_2$ and that the period of~$L^{\calP}_G(t)$ is at most~2. 
This also shows that the class of cubic graphs is a class of $\{1,3\}$-graphs 
as sought-after in Question~\ref{qst:liu}\eqref{qst:liu:b}.

\begin{example}
Inspecting the example in Figure~\ref{fig:involution} a bit further,
we see that $k = 3-2+1 = 2$ and, because $N_G=4$, we have that
\begin{align*}
  q_0(t) & = \frac16 t^3 + t^2 + \frac{11}6 t + 1 \\
         & = N_G\, p_0(t) - (N_G-1)\,\big(\frac{t}{2}+1\big)^{k-1} \\
         & = 4\, \big(\frac{1}{24} t^3 + \frac{1}{4} t^2 + \frac{5}{6} t + 1\big) -
             3\, \big(\frac{t}{2}+1\big)\,.\\[-15mm]
\end{align*}
\hfill $\square$\\[1mm]
\end{example}
  
We were not able to show that, for a cubic graph $G$, 
the period of $L^{\calP}_G(t)$ is exactly~2, that is, that $p_0 \neq p_1$.  
However, we point out that the periods of $L^{\calP}_G(t)$ and $L^{\calQ}_G(t)$ are different.
Indeed, from the above, 
\begin{align}
q_0(1) & = N_G\,p_0(1) - (N_G{-}1)(\frac32)^{k-1} = 2^k\,p_0(1) - (2^k-1)(\frac32)^{k-1} \mbox{\ and \ } \nonumber \\
q_1(1) & = N_G\,p_1(1) = N_G = 2^k.\nonumber
\end{align}
Therefore, if ~$L^{\calQ}_G(t)$ has period~1, then $q_0 = q_1$, 
hence~$q_0(1) = q_1(1)$, which implies 
that~$p_0(1) = 1 + \frac{2^k-1}{2^k}(\frac32)^{k-1} \neq 1 = p_1(1)$, because $k \geq 2$, 
thus~$L^{\calP}_G(t)$ has period~2.
Similarly, if $L^{\calP}_G(t)$ has period~1, then $L^{\calQ}_G(t)$ has period~2.
This tackles Question~\ref{qst:liu}\eqref{qst:liu:c}.

\newpage

Finally, we derive that the period of~$L^{\calP}_G(t)$ is~4 for any $\{1,3\}$-graph~$G$ 
that is not a tree or a cubic graph, applying the same strategy used for cubic graphs.
This shows that cubic graphs are the only ones that satisfy 
Question~\ref{qst:liu}\eqref{qst:liu:b}. 
Together with Theorem~\ref{thm:period2trees}, this result leads to Theorem~\ref{thm:main}.

Although our approach allows us to control the behavior for the difference of polynomials, 
we were not able, despite many efforts, to find a method to compute the desired polynomials 
explictly. This seems a challenging task even for $\{1,3\}$-trees as stated 
in~\cite[Problem 6.5]{FernandesPRAR2020}.

\subsection{Paper organization}

The paper is organized as follows. 
In Section~\ref{sec:13trees}, we show that, for every $\{1,3\}$-tree~$T$, 
the coordinates of all vertices of $\calP_T$ are in $\{0,\frac12\}$
(Theorem~\ref{thm:thevertices}). 
We also derive that~$\calQ_T$ is a 0/1 polytope (Theorem~\ref{thm:thevertices2})
yielding Theorem~\ref{thm:period2trees}.
In Section~\ref{sec:nnis} we carry out the decomposition of 
the integer points of the polytope $\calQ_G$ that will lead to Theorem~\ref{thm:main}.
We do so by studying~$L^{\calQ}_G$ via cosets of certain lattices and 
based on internally Eulerian subgraphs of~$G$.
The latter allows us to establish a connection with $L^{\calP}_G$ 
through a simple class of representative trees called caterpillars.
To this end, we use the NNI machinery (an easy local move performed 
on the edges of~$G$). 
Section~\ref{sec:theperiod} presents the proof of Theorem~\ref{thm:main}
using two main technical lemmas that are proved in 
Sections~\ref{sec:leafs} and~\ref{sec:loops}.
Finally, in Section~\ref{sec:furtherconnexions}, 
we discuss further geometric properties of the polytopes $\calP_T$ 
and a related topological connection. 

\section{The structure and period for $\mathbf{\{1,3\}}$-trees}
\label{sec:13trees}

Let $T$ be a $\{1,3\}$-tree.
We say that $T$ is \emph{trivial} if it has no internal node.
A \emph{leaf-edge} is an edge that is incident to a leaf.
A path in $T$ is a \emph{leaf-path} if its extreme edges are leaf-edges.
An internally Eulerian subgraph in $T$ is simply a collection of disjoint leaf-paths.

If $x$ be a point in $\calP_T$, then $0 \leq x_e \leq \frac12$
for each edge $e$.
Suppose that all coordinates of $x$ are in $\{0,\frac12\}$.
If  $a$, $b$, and $c$ are edges incident to a node, then
either zero or two of~$x_a$, $x_b$, and~$x_c$
are equal to~$\frac12$.
This implies that the support of $x$ is a set of
edges of a collection of disjoint leaf-paths.
We shall prove that
${V_{\calP_T} := \{\frac12\um_H : \mbox{$H$ is a collection of disjoint leaf-paths in $T$}\}}$
is the set of vertices of $\calP_T$.

First we show that any point in $\calP_T$ with all coordinates 
in $\{0,\frac12\}$ is uniquely determined by the coordinates
associated to leaf-edges.

\begin{figure}[htb]  
\begin{center}
\begin{minipage}[h]{0.5\textwidth}
  \scalebox{.9}{\includegraphics{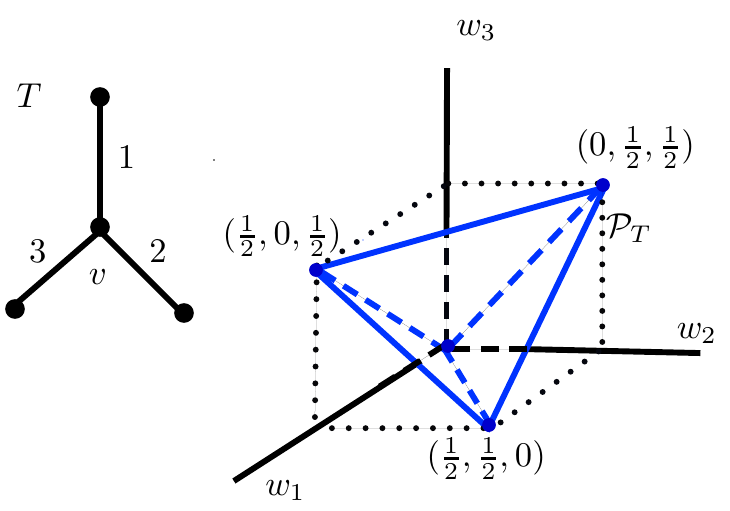}}
  \begin{align}
   L^{\calP}_T(t) & =  \frac{1}{24} t^3 + \frac{1}{4} t^2 + \,
    \left\{\begin{array}{ll}
             \frac{5}{6} t + 1,   & \mbox{if $t$ is even} \\[1mm] 
             \frac{11}{24} t + \frac{1}{4}, & \mbox{if $t$ is odd}
           \end{array}\right. \nonumber\\
   L^{\calQ}_T(t) & = \frac16 t^3 + t^2 + \frac{11}6 t + 1 \nonumber
\end{align}
\end{minipage}
\begin{minipage}[h]{0.4\textwidth}
{\small
  \[
    t\calP_T = \left\{ 
    \begin{array}{rcl}
      w_1 & \le & w_2 + w_3\\
      w_2 & \le & w_1 + w_3\\
      w_3 & \le & w_1 + w_2\\
      w_1 + w_2 + w_3 & \le & t 
    \end{array} \right.\\
\]
\[
  t\calQ_T = \left\{
  \begin{array}{rcl}
    w_1 & \le & w_2 + w_3\\
    w_2 & \le & w_1 + w_3\\
    w_3 & \le & w_1 + w_2\\
    w_1 + w_2 + w_3 & = & 2z_v \\
      z_v & \le & t     \end{array} \right.
  \]
}
\end{minipage}
\end{center}
\caption{\label{fig:involutionT}
  A $\{1,3\}$-tree $T$, its polytope $\calP_T$, the linear systems of $t\calP_T$
  and $t\calQ_T$, and
  the Ehrhart quasi-polynomials $L^{\calP}_T(t)$ and~$L^{\calQ}_T(t)$~\cite{latte}.
  The Ehrhart quasi-polynomials associated to $T$ are equal to the 
  Ehrhart quasi-polynomials associated to the cubic graph $G$ of Figure~\ref{fig:involution}.
  The points in $t\calP_G$ have coordinates~$(w_1,w_2,w_3)$ and the points in $t\calQ_G$
  have coordinates~$((w_1,w_2,w_3),(z_v))$.
} 

\end{figure}

\begin{example}
  For the $\{1,3\}$-tree $T$ in Figure~\ref{fig:involutionT} the set with all collections of
  disjoint leaf-paths is $\{ \emptyset, \{1,2\},\{2,3\},\{1,3\}\}$
  and $V_{\calP_T} = \{(0,0,0), (\frac12,\frac12,0), (0,\frac12,\frac12), (\frac12,0,\frac12)\}$
  is the vertex set of $\calP_T$.
\hfill $\square$
\end{example}  

\newpage

\begin{lem}\label{lem:PTuniqueness}
  If $T$ is a $\{1,3\}$-tree,
  $X_T$ is the set of leaf-edges of~$T$,
  and~$X$ is a subset of~$X_T$ with $|X|$ even,
  then there exists a unique point $x$ in~$\calP_T$ such that
  \begin{align}
    \label{eq:vertex}
  x_e & =\begin{cases}
  \frac12 & \text{if $e$ in $X$}\\
  0       & \text{if $e$ in $X_T \setminus X$}.
  \end{cases}
  \end{align}
  Moreover, all coordinates of $x$ are in $\{0,\frac12\}$ and
  its support is the set of edges of a collection of disjoint leaf-paths.
\end{lem}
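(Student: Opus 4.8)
\emph{The local step.} The plan is to induct on the number $|I(T)|$ of internal nodes of $T$, at each step deleting a pair of sibling leaves. The engine is a local computation at a \emph{cherry}: an internal node $v$ two of whose three incident edges, say $a$ and $b$, are leaf-edges; write $c$ for the third incident edge. I will use the following. If $x\in\calP_T$ and $x_a,x_b\in\{0,\frac12\}$, then $x_c$ is forced: it equals $\frac12$ when exactly one of $x_a,x_b$ is $\frac12$, and $0$ otherwise; moreover $x_c\in\{0,\frac12\}$, and the number of coordinates of $(x_a,x_b,x_c)$ equal to $\frac12$ is even. Conversely, for any $x_a,x_b\in\{0,\frac12\}$, assigning $x_c$ this forced value makes $(x_a,x_b,x_c)$ satisfy all of $S^{\calP}_1(v)$. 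This is a short finite check: at $v$, the inequalities $S^\triangle(v)$ give $x_c\ge|x_a-x_b|$, while the perimeter inequality gives $x_c\le 1-x_a-x_b$; so $x_a=x_b=0$ forces $x_c=0$, $\{x_a,x_b\}=\{0,\frac12\}$ forces $x_c=\frac12$, and $x_a=x_b=\frac12$ forces $x_c=0$, the forced triple being $(0,0,0)$, a permutation of $(\frac12,0,\frac12)$, or $(\frac12,\frac12,0)$ respectively, all of which lie in $S^{\calP}_1(v)$.

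\emph{Base cases and setup of the induction.} If $|I(T)|=0$ then $T$ is a single edge $e$, $X=\emptyset$ is the only even subset of $X_T=\{e\}$, and $\calP_T=\{x:0\le x_e\le\frac12\}$ contains the unique point $x=0$ realizing it, with empty support. If $|I(T)|=1$ then $T=K_{1,3}$ and all three edges are leaf-edges, so $X$ prescribes $x$ completely; one checks the prescribed point lies in $\calP_T$ exactly when $|X|$ is even (it is $(0,0,0)$ or a permutation of $(\frac12,\frac12,0)$), which gives existence and uniqueness, with support empty or a single leaf-path. Now suppose $|I(T)|\ge 2$. The subgraph $T[I(T)]$ induced on the internal nodes is connected — any interior vertex of a path in $T$ has degree at least $2$, hence $3$, so is internal — and thus a tree with at least two nodes, so it has a node $v$ of degree $1$ in $T[I(T)]$. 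Then $v$ is a cherry of $T$ and its third incident edge $c=vu$ has $u\in I(T)$. Delete the two leaf-neighbours $\ell_1,\ell_2$ of $v$ (with the edges $a,b$) to form $T''$: this is a $\{1,3\}$-tree with $|I(T'')|=|I(T)|-1$, in which $v$ is a leaf and $c$ a leaf-edge, and $X_{T''}=(X_T\setminus\{a,b\})\cup\{c\}$. Let $\varepsilon=\frac12$ if exactly one of $a,b$ lies in $X$ and $\varepsilon=0$ otherwise, and put $X''=(X\setminus\{a,b\})\cup Y$, where $Y=\{c\}$ if $\varepsilon=\frac12$ and $Y=\emptyset$ otherwise. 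Since $|X''|$ exceeds $|X\setminus\{a,b\}|$ by $1$ precisely when $|X\cap\{a,b\}|=1$, a parity count gives $|X''|\equiv|X|\equiv 0\pmod 2$. By the induction hypothesis there is a unique $x''\in\calP_{T''}$ realizing $X''$ on its leaf-edges, with coordinates in $\{0,\frac12\}$ and support a collection of disjoint leaf-paths of $T''$; note $x''_c=\varepsilon$.

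\emph{Assembling the point.} For uniqueness: if $x\in\calP_T$ realizes $X$ on $X_T$, then $x_c$ is forced by the local step, and the restriction $x|_{E(T'')}$ satisfies the inequalities defining $\calP_{T''}$ — namely $S^{\calP}_1(w)$ for internal $w\ne v$, a sub-list of those defining $\calP_T$ — and realizes $X''$; by uniqueness for $T''$, $x|_{E(T'')}=x''$, and together with the prescribed values $x_a,x_b$ this determines $x$. For existence: extend $x''$ by the prescribed values on $a,b$; the inequalities at internal nodes $w\ne v$ hold because $x''\in\calP_{T''}$, and those at $v$ hold by the converse half of the local step, since $(x_a,x_b,x''_c)=(x_a,x_b,\varepsilon)$ is precisely the forced triple. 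The coordinates remain in $\{0,\frac12\}$. Finally, the support of $x$: if $x_a=x_b=0$ (hence $x_c=0$) it equals that of $x''$; if exactly one of $x_a,x_b$ is $\frac12$ (hence $x_c=\frac12$), then $v$ is an endpoint of one leaf-path of $x''$, which we prolong along $a$ (or $b$) to $\ell_1$ (or $\ell_2$); if $x_a=x_b=\frac12$ (hence $x_c=0$), then $v$ is untouched by $x''$ and we adjoin the new leaf-path $\ell_1 v \ell_2$. In all cases this is a collection of disjoint leaf-paths of $T$, completing the induction.

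\emph{Expected main obstacle.} The two delicate points are (i) choosing the cherry as a leaf of $T[I(T)]$, which is exactly what guarantees $u$ internal and hence $T''$ a genuine $\{1,3\}$-tree to which the hypothesis applies; and (ii) the parity bookkeeping $|X''|\equiv|X|\pmod 2$, which is what makes the value forced at $c$ mutually consistent and, globally, ensures $\calP_T$ is nonempty for every even $X$ (for odd $X$ the cherry rule already has no solution, as the $K_{1,3}$ case shows). Everything else is a bounded verification at a single node.
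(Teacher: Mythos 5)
Your proof is correct and follows essentially the same route as the paper's: induction on the size of the tree, removing the two leaves of a cherry, adjusting $X$ with the same parity bookkeeping at the new leaf-edge, and invoking uniqueness/existence for the smaller tree. The only difference is that you spell out the local forcing of $x_c$ at the cherry and the existence of a suitable cherry via $T[I(T)]$, steps the paper leaves as ``one can verify.''
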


\begin{proof}
  The proof is by induction on the number of nodes of $T$.
  If $T$ is trivial or has only one internal node, then all edges of $T$
  are leaf-edges and $x$ is completely defined by~\eqref{eq:vertex} and satisfies all conditions of the lemma. 
  Thus, we may assume that $T$ has at least two internal nodes. 
  
  Let $r$ be an internal node of $T$ adjacent to two leaf-edges,
  and let $a$, $b$, and~$c$ be the three edges incident to~$r$.
  We may assume that $b$ is incident to a leaf $u$ and $c$ is incident
  to a leaf $v$.
  Take $T' := T - \{u, v\}$ and
  $X' := X \setminus \{b, c\} \cup \{a\}$ if $|X \cap \{b, c\}| = 1$ and
  $X' := X \setminus \{b, c\}$ otherwise.
  By induction, there exists a unique point $x'$ in $\calP_{T'}$ with
  coordinates in $\{0,\frac12\}$ such that for each leaf-edge~$e'$ of $T'$
  we have that $x'_e = \frac12$ if and only if $e$ in $X'$ and such that
  the support of $x'$ is the set of edges of a collection of
  disjoint leaf-paths of $T'$.
  One can verify that~$x'$ can be uniquely extended to a point $x$ in
  $\calP_{T}$ with coordinates in $\{0,\frac12\}$ satisfying the
  conditions of the lemma.
\end{proof}

Next, we show that any point in $\calP_T$ with coordinates
in $\{0,\frac12\}$ whose support is a collection of disjoint
leaf-paths is a vertex of $\calP_T$.

\begin{lem}\label{lem:PTvertices}
  If $T$ is a $\{1,3\}$-tree and $H$ is a
  collection of disjoint leaf-paths in~$T$, 
  then~$\frac12\um_H$ is a vertex of $\calP_T$.
\end{lem}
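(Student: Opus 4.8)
The plan is to exhibit a linear functional on $\IR^E$ that attains its maximum over $\calP_T$ at the single point $\frac12\um_H$; since a point that uniquely maximizes a linear functional over a polytope is a vertex of that polytope, this proves the lemma, and the needed uniqueness will be handed to us by Lemma~\ref{lem:PTuniqueness}. First I would dispose of the trivial case: if $T$ is trivial then $\calP_T$ is the segment $[0,\frac12]$, whose two vertices are exactly $\frac12\um_\emptyset = 0$ and $\frac12\um_{\{e\}} = \frac12$ for the unique edge $e$. So from now on assume $T$ is nontrivial.

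Next I would verify that $\frac12\um_H$ actually belongs to $\calP_T$. A collection $H$ of disjoint leaf-paths is internally Eulerian, so at every internal node $v$ the degree of $v$ in $H$ is $0$ or $2$; hence among the coordinates of $\frac12\um_H$ on the three edges incident to $v$ either all three are $0$ or exactly two equal $\frac12$ and one equals $0$. In either case the triangle inequalities of $S^\triangle(v)$ and the perimeter inequality $w_a+w_b+w_c\le 1$ are satisfied, and since $T$ is connected and nontrivial there is no component-edge constraint $0\le w_e\le\frac12$ left to check. Thus $\frac12\um_H\in\calP_T$.

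Now let $X_T$ be the set of leaf-edges of $T$ and put $X := E(H)\cap X_T$. Because $T$ is nontrivial, each leaf-path in $H$ has exactly two distinct leaf-edges and, the leaf-paths being disjoint, no leaf-edge is shared by two of them; hence $|X|$ is even and $X$ is precisely the set of leaf-edges in the support of $\frac12\um_H$. Consider the linear functional
\[
  \ell(x)\;:=\;\sum_{e\in X}x_e\;-\;\sum_{e\in X_T\setminus X}x_e .
\]
Since every point of $\calP_T$ has all coordinates in $[0,\frac12]$, we have $\ell(x)\le\frac12\,|X|$ for all $x\in\calP_T$, with equality if and only if $x$ satisfies \eqref{eq:vertex} for this $X$; and $\frac12\um_H$ does satisfy \eqref{eq:vertex}, so it attains this maximum. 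As $|X|$ is even, Lemma~\ref{lem:PTuniqueness} applies and tells us that \eqref{eq:vertex} has a unique solution in $\calP_T$. Therefore $\frac12\um_H$ is the unique maximizer of $\ell$ over $\calP_T$, hence a vertex of $\calP_T$. Once Lemma~\ref{lem:PTuniqueness} is in place the argument is light; the only point needing attention is the bookkeeping that keeps the parity hypothesis ``$|X|$ even'' available — which is exactly why the trivial tree, where a lone leaf-edge would force $|X|=1$, is handled separately.
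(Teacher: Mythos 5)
Your proposal is correct and follows essentially the same route as the paper: the paper also uses the supporting functional $2\sum_{e \in X} w_e - 2\sum_{e \in X_T \setminus X} w_e \leq |X|$ on the leaf-edge coordinates and invokes Lemma~\ref{lem:PTuniqueness} to conclude that $\frac12\um_H$ is the unique maximizer, hence a vertex. Your extra care in checking membership of $\frac12\um_H$ in $\calP_T$, the parity of $|X|$, and the trivial tree only makes explicit what the paper leaves implicit.
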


\begin{proof}
  It is clear that $\frac12\um_H$ is in $\calP_T$.
  Let $X_T$ be the set of leaf-edges of $T$ and
  $X$ be the edges of~$H$ that are leaf-edges of $T$.
  The hyperplane $2\sum_{e \in X} w_e -2 \sum_{e \in X_T \setminus X} w_e = |X|$ is a 
  supporting hyperplane of~$\calP_T$.
  Indeed, for each point $x$ in~$\calP_T$ and each edge $e$, we have
  that $0 \leq x_e \leq \frac12$.
  Thus,
  $2\sum_{e \in X} x_e - 2 \sum_{e \in X_T \setminus X} x_e \leq 2 \sum_{e \in X} x_e  \leq |X|$
  and equality holds if and only if $x_e = 1/2$ for each $e$ in $X$ and $x_e = 0$
  for each~$e$ in $X_T \setminus X$.
  By Lemma~\ref{lem:PTuniqueness}, $\frac12\um_H$ is the unique point in $\calP_T$
  for which equality holds and therefore it is a vertex~of $\calP_T$.
\end{proof}

\begin{thm}\label{thm:thevertices}
  If $T$ is a $\{1,3\}$-tree, then $V_{\calP_T} $ is the set of vertices of the polytope~$\calP_T$.
\end{thm}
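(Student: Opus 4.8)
The statement to prove is Theorem~\ref{thm:thevertices}: for a $\{1,3\}$-tree $T$, the set $V_{\calP_T}=\{\tfrac12\um_H : H\text{ a collection of disjoint leaf-paths in }T\}$ is exactly the vertex set of $\calP_T$. The plan is to establish two inclusions. The inclusion $V_{\calP_T}\subseteq \mathrm{vert}(\calP_T)$ is already done: by Lemma~\ref{lem:PTvertices}, every point $\tfrac12\um_H$ with $H$ a collection of disjoint leaf-paths is a vertex of $\calP_T$. So all the work is in the reverse inclusion: every vertex of $\calP_T$ lies in $V_{\calP_T}$.

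For that, I would argue as follows. Let $v$ be a vertex of $\calP_T$. The first step is to show that all coordinates of $v$ lie in $\{0,\tfrac12\}$. One route: recall from the discussion preceding the theorem that a point in $\calP_T$ with all coordinates in $\{0,\tfrac12\}$ has support equal to the edge set of a collection of disjoint leaf-paths (this follows from the node condition that at each internal node either zero or two of the three incident coordinates equal $\tfrac12$). So once we know $v\in\{0,\tfrac12\}^{E}$, we are done: $v=\tfrac12\um_H$ for the collection $H$ of disjoint leaf-paths that is its support, and hence $v\in V_{\calP_T}$. Thus the crux is: \emph{a vertex of $\calP_T$ has all coordinates in $\{0,\tfrac12\}$.} Since Liu and Osserman already proved all vertex coordinates are in $\{0,\tfrac14,\tfrac12,1\}$, and since $0\le x_e\le\tfrac12$ on $\calP_T$ rules out the value $1$, it remains only to rule out the value $\tfrac14$ at a vertex.

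To rule out $\tfrac14$, I would use Lemma~\ref{lem:PTuniqueness} as a counting/uniqueness device. Let $X_T$ be the leaf-edge set and let $v$ be any point of $\calP_T$. Consider the map sending $v$ to the pattern of its leaf-edge coordinates. If $v$ is a vertex, I want to show its leaf-edge coordinates are all in $\{0,\tfrac12\}$ and then invoke Lemma~\ref{lem:PTuniqueness} to conclude $v$ equals the unique point determined by that $\{0,\tfrac12\}$-pattern, which has all coordinates in $\{0,\tfrac12\}$. One clean way: pick the vertex $v$ and suppose for contradiction some coordinate equals $\tfrac14$ (or more generally is not in $\{0,\tfrac12\}$). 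Using an argument on a pendant internal node $r$ with two incident leaf-edges $b,c$ (exactly the reduction used in Lemma~\ref{lem:PTuniqueness}), one shows that the triangle inequalities plus perimeter bound $\le 1$ at $r$, together with $0\le w\le\tfrac12$ on leaf-edges, force a small perturbation of $v$ inside $\calP_T$ in a direction $\pm d$ unless all leaf-edge coordinates at $r$ are at the extreme values $0$ or $\tfrac12$ — contradicting that $v$ is a vertex. Iterating the pendant-deletion reduction, all leaf-edge coordinates of $v$ must be in $\{0,\tfrac12\}$; then Lemma~\ref{lem:PTuniqueness} pins $v$ down as the unique $\{0,\tfrac12\}$-point with that leaf pattern, finishing the proof.

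The main obstacle is the step ruling out the value $\tfrac14$ at a vertex — i.e.\ showing that any point of $\calP_T$ whose leaf-edge coordinates are not all in $\{0,\tfrac12\}$ fails to be a vertex. The cleanest implementation is probably the induction mirroring Lemma~\ref{lem:PTuniqueness}: take a pendant internal node $r$ incident to leaf-edges $b,c$ and an internal edge $a$; if, say, $v_b\notin\{0,\tfrac12\}$, build an explicit perturbation that changes $v_b$ (and compensates on $a$ and along the leaf-path through $a$) while staying in $\calP_T$, exhibiting $v$ as a midpoint and so not a vertex; if $v_b,v_c\in\{0,\tfrac12\}$ but some interior coordinate is off, pass to $\calP_{T'}$ with $T'=T-\{u,v\}$ and use that by induction $v$ restricted to $T'$ must be a vertex of $\calP_{T'}$, hence in $\{0,\tfrac12\}^{E(T')}$, and then check the unique extension across $r$ also lands in $\{0,\tfrac12\}$. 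Care is needed at the base cases (trivial tree, single internal node) and in verifying the perturbation respects the perimeter constraint $w_a+w_b+w_c\le 1$; those are routine but must be checked. Once all coordinates are known to be in $\{0,\tfrac12\}$, the identification with $\tfrac12\um_H$ for a disjoint union of leaf-paths is immediate from the node parity condition stated just before Lemma~\ref{lem:PTuniqueness}.
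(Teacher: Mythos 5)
Your overall architecture (Lemma~\ref{lem:PTvertices} gives $V_{\calP_T}\subseteq\mathrm{vert}(\calP_T)$; then show every vertex is half-integral and invoke Lemma~\ref{lem:PTuniqueness}) is reasonable, but the step that carries all the weight --- ruling out the value $\tfrac14$ at a vertex --- is not actually proved, and the local claim you base it on is false as stated. At a pendant internal node $r$ with leaf-edges $b,c$ and third edge $a$, fix $w_a=\alpha$ and look at the feasible region in the $(w_b,w_c)$-plane cut out by $S^{\calP}_1(r)$: its vertices are $(\alpha,0)$, $(0,\alpha)$, $(\tfrac12,\tfrac12-\alpha)$, $(\tfrac12-\alpha,\tfrac12)$. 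So a point with, say, $x_a=x_b=\tfrac14$, $x_c=0$ admits \emph{no} perturbation in the $b,c$ coordinates alone, even though $x_b\notin\{0,\tfrac12\}$; the constraints at $r$ do not ``force the leaf coordinates to the extreme values $0$ or $\tfrac12$.'' You acknowledge this by proposing to compensate on $a$ and further along the tree, but that propagation is precisely the hard part: to show $x$ is not a vertex you must exhibit a nonzero direction orthogonal to \emph{all} tight constraints of $\calP_T$ at $x$, i.e.\ show the tight system is rank-deficient, and no such construction is given (asserting it is ``routine'' essentially assumes what is to be proved). Two further unaddressed points: Lemma~\ref{lem:PTuniqueness} applies only when the set $X$ of $\tfrac12$-valued leaf-edges has even cardinality, and you never show a vertex's leaf pattern has even parity; and the assertion that the restriction of a vertex of $\calP_T$ to $T'=T-\{u,v\}$ ``must be a vertex of $\calP_{T'}$ by induction'' does not follow from the inductive hypothesis (which concerns vertices of $\calP_{T'}$, not restrictions of vertices of $\calP_T$) and would itself need a lifting argument.

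For comparison, the paper's proof sidesteps the coordinate analysis entirely. It shows $\calP_T$ is contained in the convex hull of $V_{\calP_T}$ by taking a smallest counterexample: at a vertex $x$ there are $|E|$ linearly independent tight inequalities, and since $|E|=2|I|+1$ while each internal node contributes only four inequalities, some internal node $v$ has three tight inequalities; these force $(x_a,x_b,x_c)$ at $v$ to be $(0,0,0)$ or a permutation of $(\tfrac12,\tfrac12,0)$. One then splits $T$ at $v$ into $T_a,T_b,T_c$, checks the projections of $x$ lie in the smaller polytopes, decomposes each as a convex combination of $V_{\calP_{T_i}}$ by minimality, and glues. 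If you want to salvage your route, the counting step $|E|=2|I|+1$ is exactly the tool that replaces your missing propagation argument; without something of that nature, the proposal has a genuine gap.
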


\begin{proof}
  Let $\calR_T$ be the set of points that are a convex combination of points in $V_{\calP_T} $.
  By Lemma~\ref{lem:PTvertices} we have that $\calR_T \subseteq \calP_T$.
  In order to prove the lemma it suffices to show the converse inclusion.

  Suppose that $\calP_T \not\subseteq \calR_T$ and let $T$ be the smallest $\{1,3\}$-tree
  with  $\calP_T \not\subseteq \calR_T$; that is, with the number of nodes as small as possible.
  It is clear that $T$ is nontrivial.
  Let $x$ be a vertex of~$\calP_T$ not contained in~$\calR_T$. 

  Since $x$ is a vertex of $\calP_T$, there are $|E|$ linearly independent
  inequalities among the ones that determine $\calP_T$
  satisfied by $x$ with equality.
  As $|E| = 2|I| + 1$ there must exist an internal node $v$ such
  that three inequalities in $S^{\calP}_1(v)$ are satisfied by $x$
  with equality.
  Let~$a, b$, and $c$ be the three edges incident to $v$.
  We have two possibilities:
  \begin{itemize}
  \item either the three inequalities
    in $S^{\triangle}(v)$ are satisfied by $x$ with equality and
    therefore $x_a = x_b = x_c = 0$;
  \item or the perimeter inequality and two inequalities in $S^{\triangle}(v)$
    are satisfied by~$x$ with equality and therefore two of $x_a$, $x_b$,
    and $x_c$ are $\frac12$ and the other is~$0$.
  \end{itemize}
  
  The tree~$T$ can be partitioned into $\{1,3\}$-trees
  $T_a$, $T_b$, and~$T_c$ that share the node $v$
  and have~$a$, $b$, and $c$ as leaf-edges, respectively. 
  Let $x^{}_{T_a}$, $x^{}_{T_b}$, and $x^{}_{T_c}$ be the corresponding `projections' of $x$
  onto the edges of $T_a$, $T_b$, and $T_c$, respectively.
  One can verify that in both possibilities ($x_a= 0$ or $x_a = \frac12$)
  $x^{}_{T_a}$ is in $\calP_{T_a}$.
  Since $T_a$ is smaller than~$T$, it follows that $x^{}_{T_a}$ is in $\calR_{T_a}$ and therefore
  $x^{}_{T_a}$ can be decomposed as a convex combination of points in~$V_{\calP_{T_a}}$.
  Similarly, we can decompose~$x^{}_{T_b}$ and~$x^{}_{T_c}$ as a convex combination of
  points in~$V_{\calP_{T_b}}$   and~$V_{\calP_{T_c}}$, respectively.
  These decompositions can be easily glued together to form a decomposition of $x$
  as a convex combination of points in $V_{\calP_T} $, contradicting our assumption.
\end{proof}

Using the same strategy, one can derive the following about $\calQ_T$.
Recalling that~$I$ is the set of internal nodes of~$G$,
let $\um'_H$ denote the pair $(\um_H,z)$ in $\ZZ^E \times \ZZ^{I}$, 
where~$z_v = 1$ if $v$ is an internal node of a leaf-path in $H$, and~$z_v = 0$ otherwise.
Let~${V_{\calQ_T} := \{\um'_H : \mbox{$H$ is a collection of disjoint leaf-paths in $T$}\}}$.
This discussion is summarized in the following theorem.

\begin{thm}\label{thm:thevertices2}
If $T$ is a $\{1,3\}$-tree, then $V_{\calQ_T}$ is the set of vertices of the polytope~$\calQ_T$.~\qed
\end{thm}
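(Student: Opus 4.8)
The plan is to mirror the three-step development just carried out for $\calP_T$, replacing the coordinate set $\{0,\frac12\}$ by $\{0,1\}$ and carrying the auxiliary variables $z_v$ along. First I would establish the analogue of Lemma~\ref{lem:PTuniqueness}: if $X_T$ is the set of leaf-edges of $T$ and $X \subseteq X_T$ has even cardinality, then there is a \emph{unique} point $(w,z)$ in $\calQ_T$ with $w_e = 1$ for $e \in X$, $w_e = 0$ for $e \in X_T \setminus X$, and moreover all $w$-coordinates lie in $\{0,1\}$, all $z$-coordinates lie in $\{0,1\}$, and $(w,z) = \um'_H$ for the collection $H$ of disjoint leaf-paths whose edge set is $\{e : w_e = 1\}$. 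The proof is the same induction on $|V(T)|$: peel off an internal node $r$ adjacent to two leaves $u,v$ via edges $b,c$, set $T' = T - \{u,v\}$ and adjust $X$ to $X'$ at the stub edge $a$ exactly as before, invoke induction on $T'$, and check that the triangle equalities $w_a \le w_b + w_c$ etc.\ together with the parity equation $w_a + w_b + w_c = 2z_r$ force the unique $\{0,1\}$-extension (if $|X \cap \{b,c\}| = 1$ then $w_a = 1$ propagates inward and $z_r = 1$; if $\{b,c\} \subseteq X$ then $w_b = w_c = 1$ forces $w_a = 0$ by the triangle inequality $w_b \le w_a + w_c$ read with the companion inequality, and $z_r = 1$; if $X \cap \{b,c\} = \emptyset$ then $w_b = w_c = 0$ forces $w_a = 0$ and $z_r = 0$).

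Second I would prove the analogue of Lemma~\ref{lem:PTvertices}: every $\um'_H$ is a vertex of $\calQ_T$. Here the supporting hyperplane is again built from the leaf-edges: with $X$ the leaf-edges of $T$ lying in $H$, the functional $\sum_{e \in X} w_e - \sum_{e \in X_T \setminus X} w_e$ is maximized over $\calQ_T$ exactly at the points with $w_e = 1$ on $X$ and $w_e = 0$ on $X_T \setminus X$ (using $0 \le w_e \le 1$, which follows for leaf-edges from the constraint $w_e \le 1$ implicit in the system, or more carefully from the triangle/perimeter structure together with the single-edge component constraint), and by the uniqueness statement just proved this face is a single point, hence a vertex. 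Third, I would run the global induction of Theorem~\ref{thm:thevertices} verbatim: take the smallest $T$ with $\calQ_T \not\subseteq \mathrm{conv}(V_{\calQ_T})$, pick a vertex $(w,z)$ outside the hull, use $|E| + |I| = (2|I|+1) + |I| = 3|I|+1$ tight linearly independent inequalities to locate an internal node $v$ at which the whole local system $S^{\calQ}_1(v)$ (three triangle inequalities plus the parity equation, which is always tight, plus possibly $z_v \le 1$) is tight, deduce $w_a = w_b = w_c = 0, z_v = 0$ or two of the $w$'s equal $1$ with the third $0$ and $z_v = 1$, split $T$ at $v$ into $T_a, T_b, T_c$, project, apply minimality to each piece, and glue the convex decompositions back together.

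The main obstacle, and the only place where $\calQ_T$ genuinely differs from $\calP_T$, is bookkeeping the $z$-variables through the splitting-and-gluing step: when $v$ is split into the three subtrees, the variable $z_v$ must be assigned consistently to the copy of $v$ appearing in each of $T_a, T_b, T_c$, and one must check that the projected points $(w,z)_{T_a}$ etc.\ actually satisfy the parity equation at the new leaf-edge endpoints of the subtrees and that the reassembled convex combination reproduces both the original $w$ and the original $z$. Since $z_v$ is determined by $w_a + w_b + w_c$ (it equals their half-sum), and the splitting preserves each $w_e$, this consistency is automatic, but it should be stated. I would also note the base case: if $T$ is trivial or has a single internal node, $\calQ_T$ is the obvious low-dimensional polytope whose vertices are exactly the $\um'_H$, which is checked directly. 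Everything else is identical in form to the $\calP_T$ arguments, which is why the theorem can be stated with a one-line proof referencing "the same strategy"; the write-up here simply makes that strategy explicit.

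Putting these three lemmas together exactly as Lemmas~\ref{lem:PTuniqueness}--\ref{lem:PTvertices} combine into Theorem~\ref{thm:thevertices} yields that $V_{\calQ_T}$ is precisely the vertex set of $\calQ_T$. In particular every vertex of $\calQ_T$ is a $\{0,1\}$-point, so $\calQ_T$ is a $0/1$ polytope and $L^{\calQ}_T(t)$ is an honest polynomial, i.e.\ has period $1$; combined with Liu and Osserman's identity $q_1(t) = N_T\, p_1(t) = N_T\, p_3(t)$ and the fact (from Theorem~\ref{thm:thevertices}) that the vertices of $\calP_T$ have coordinates in $\{0,\frac12\}$ so that $L^{\calP}_T$ has period dividing $2$, this also delivers Theorem~\ref{thm:period2trees}.
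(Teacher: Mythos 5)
Your proposal is correct and takes essentially the same approach as the paper: the paper proves Theorem~\ref{thm:thevertices2} by simply invoking ``the same strategy'' as for $\calP_T$, and your write-up is a faithful expansion of exactly that strategy, correctly isolating the only new point, namely that the $z$-coordinates are affine functions of $w$ via the parity equations and therefore ride along through the uniqueness, supporting-hyperplane, and splitting-and-gluing steps. One small correction: in the uniqueness step with $w_b=w_c=1$, the value $w_a=0$ is forced by the parity equation together with $z_r\le 1$ (which gives $w_a+2\le 2$), not by the triangle inequalities, which only yield $w_a\ge 0$.
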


\begin{example}
  For the $\{1,3\}$-tree $T$ in Figure~\ref{fig:involutionT} we have that the vertex set of $\calQ_T$ is
  $V_{\calQ_T} = \{((0,0,0),(0)), ((1,1,0),(1)), ((0,1,1),(1)),((1,0,1),(1))\}$.
\hfill $\square$
\end{example}  

Now we can give a proof for Theorem \ref{thm:period2trees}, which  characterizes the periods of the Ehrhart quasi-polynomials for $\{1,3\}$-trees.

\begin{proof}[Proof of Theorem~\ref{thm:period2trees} (the period for $\{1,3\}$-trees)]
By Theorem~\ref{thm:thevertices2}, we known that $\calQ_T$ is integral.
It follows from Ehrhart's theorem~\cite[Theorem~3.8]{BeckS2009} that~$L^{\calQ}_T(t)$ has period 1
and if $q_0$ and $q_1$ are its constituents  then $q_0 = q_1$ and $q_1(0)=1$.
By Theorem~\ref{thm:thevertices}, we have that $\calP_T$ is half-integral and
$L^{\calP}_T(t)$ has period at most~2.
Let $p_0$ and~$p_1$ be the constituents of~$L^{\calP}_T(t)$.
It is clear that $p_0(0) = 1$ and the constant coefficient of $p_0$ is~1.
Liu and Osserman~\cite[Proposition~3.5]{LiuO2006} proved that $q_1 = N_T\,p_1$, where $N_T$
is the number of internally Eulerian subgraphs of $T$.
Therefore, the constant coefficient of $p_1$ is $1/N_T$ and, because  $N_T \geq 2$ for each $\{1,3\}$-tree, then $p_0 \neq p_1$.
Hence, the period of $L^{\calP}_T(t)$ is exactly~2.
\end{proof}





\section{Cosets and combinatorial tools}\label{sec:nnis}

The intent of this section is to derive a relation between
$L^{\calQ}_G(t)$ and~$L^{\calP}_G(t)$ for (even or odd) nonnegative integer $t$.
For this task, internally Eulerian subgraphs, a class of graphs that resembles
caterpillars, and a local move performed on graphs called nearest neighbor
interchange (NNI) play central roles.
The general idea is partitioning the integer points of the
polytope $t\calQ$ into parts of size having~$L^{\calP}_G(t)$ as our unit of
standard measurement.

\subsection{Eulerianicity}

Let $G$ be a $\{1,3\}$-graph and $\calL_G := (2\ZZ^E) \times \ZZ^I$. 
For each integer $t \geq 0$ and each internally Eulerian subgraph~$H$ of~$G$, 
we define 
\begin{equation}
  \label{eq:partevol}
\parte(G,H) \ := \ t\calQ_G \cap (\calL_G+(\um_H,0)) \mbox{\quad and \quad} 
  \vol(G,H) \ := \ |\parte(G,H)|, 
\end{equation}
where $\calL_G+(\um_H,0)$ is a coset of the lattice $\calL_G$, in the lattice 
$\ZZ^E \times \ZZ^I$.

In words, we are now counting integer points in cosets of a lattice, not
just in a lattice.
The term ``$\parte$'' originates from the fact that these sets
correspond to parts of a 
partition of the set of integer points of the polytope $t\calQ_G$.
We note that 
\begin{equation}
  \label{eq:partition}
  t\calQ_G \cap (\ZZ^E \times \ZZ^I) \ = \ \bigcup_H \parte(G,H)   
  \mbox{\quad and \quad}
  L^{\calQ}_G(t) \ = \ \sum_H \vol(G,H), 
\end{equation}
where the union and the summation are over all internally Eulerian subgraphs $H$ of $G$.
Indeed, because of the parity constraint~\eqref{eq:parity}, 
for every integer point $(w,z)$ in $t\calQ_G$, 
the set of edges~$e$ for which $w_e$ is odd induces an internally Eulerian subgraph of $G$.

\newpage

\begin{example}\label{example}
  Consider the polytope $t\calQ_G$ in Figure~\ref{fig:involution}.
  The graph~$G$ has four internally Eulerian subgraphs, namely, the subgraphs
  $\emptyset, H_{\{1\}}, H_{\{2\}}$, and $H_{\{1,2\}}$ induced by the
  edge sets $\emptyset$, $\{1\}$, $\{2\}$, and~$\{1,2\}$, respectively.
  One can verify that
\[
\begin{array}{ll}
  \parteu_2(G,\emptyset) & = \{((0,0,0), (0,0)), ((2,0,0),(2,0)),
  ((0,2,0),(0,2)), ((2,2,0),(2,2))\}, \nonumber \\  
\parteu_2(G,H_{\{1\}}) & = \{ ((1,0,0), (1,0)), ((1,2,0),(1,2)) \}, \nonumber \\  
\parteu_2(G,H_{\{2\}}) & = \{((0,1,0), (0,1)), ((2,1,0),(2,1))\}, \mbox{ and } \nonumber \\  
\parteu_2(G,H_{\{1,2\}}) & = \{((1,1,0), (1,1)), ((1,1,2),(2,2))\}. \nonumber
\end{array}
\]
Therefore, $L^{\calQ}_G(2) = 10 = \volu_2(G,\emptyset) + \volu_2(G,H_{\{1\}}) + \volu_2(G,H_{\{2\}}) + \volu_2(G,H_{\{1,2\}}) = 4+2+2+2.$
\hfill$\square$
\end{example}

The number of  internally Eulerian subgraph of $G$ is denoted by $N_G$.
If $G$ has $n$ nodes and $m$ edges, then its \defi{cyclomatic number} is $m-n+1$.
Liu and Osserman~\cite[Remark~3.11]{LiuO2006} observed that
if $G$ has $h$ leaf-edges and cyclomatic number $k$, then
$N_{G}=2^k$ if $h = 0$ and  $N_{G}=2^{k+h-1}$ if $h > 0$.

The remaining of this entire section is devoted to rewriting the summation
in~\eqref{eq:partition} in a way that its terms are more amenable
to be measured having~$L^{\calP}_G(t)$ as standard.
An important special term is $\vol(G,\emptyset)$.
This term is associated to
the set of integer points $\parte(G,\emptyset)$ formed by all integer points~$(w, z)$ 
in $t\calQ_G$ for which all coordinates of~$w$ are even.  Equivalently,
$$\parte(G,\emptyset) = \{(w, z) \in t\calQ_G: 
\mbox{$w = 2x$ for an integer point $x$ in $t\calP_G$}\}$$ 
since $z$ is uniquely determined by $w$ in order for~$(w,z)$ to be in $t\calQ_G$.
Therefore
\begin{equation}
  \label{eq:emptycoset}
  \vol(G,\emptyset) \ = \ |\parte(G,\emptyset)| \ = \ L^{\calP}_G(t).
\end{equation} 
For instance, for the graph $G$ appearing
in Figure~\ref{fig:involution}, $\volu_2(G,\emptyset) = 4 = L^{\calP}_G(2)$.

\subsection{Caterpillars}


Wakabayashi~\cite[Theorem~A(ii)]{Wakabayashi2013} proved that polytopes $\calP_G$ associated to 
any connected $\{1,3\}$-graph $G$ with a given number $n$ of nodes and a given number $m$ of edges
have the same  Ehrhart quasi-polynomial $L^{\calP}_G(t)$.
This gives us the freedom to elect a convenient representative for every equivalence
class of connected graphs with $n$ nodes and $m$ edges.
It turns out that, in order to derive the period of~$L^{\calP}_G(t)$,
a particularly convenient choice of a representative graph for~$G$
resembles a caterpillar.
We define this convenient class of representative graphs in the sequel.

A \emph{caterpillar} is a tree for which the removal of all leaves 
results in a path, called its \emph{central path}, or in the empty graph
(Figure~\ref{fig:Ghk}(a)).  
When the result is the empty graph, we set the central path to be the empty path. 
The edges that are not in the central path of the caterpillar are its \emph{legs}. 
The nodes that are not in the central path and are adjacent to an end node of the
  central path are its \emph{roots}.
When the central path is empty, both nodes are roots.
The legs adjacent to the roots are the \emph{stems} of the caterpillar.

For nonnegative integers $h$ and $k$ such that $h+k \geq 2$, let $T$ be a
$\{1,3\}$-tree which is a caterpillar with $h+k$ leaves.  A graph obtained
from $T$ by adding a loop to $k$ of its leaves is called an
\emph{$(h,k)$-caterpillar} (Figure~\ref{fig:Ghk}(b)).
A loop of an $(h,k)$-caterpillar is \emph{extremal} if its node is adjacent to
an end node of the central path.

The number of nodes in $T$ and in an
$(h,k)$-caterpillar obtained from $T$ is the same, that is, $2(h+k-1)$.

We denoted by~$G_{h,k}$ the $(h,k)$-caterpillar whose legs with loops are all
consecutive and legs without loops are also all consecutive
(Figure~\ref{fig:Ghk}(c)).  Due to the discussion above, henceforth all of our
ambient graphs will be $(h,k)$-caterpillars $G_{h,k}$.  To make the proofs
that follow easier to read, we use the notation
\[ 
  L^{\calP}_{h,k}(t) := L^{\calP}_{G_{h,k}}(t) \mbox{\quad and \quad} 
   L^{\calQ}_{h,k}(t) := L^{\calQ}_{G_{h,k}}(t).
\]

\begin{figure}[ht]
  \centering \scalebox{.75}{\includegraphics{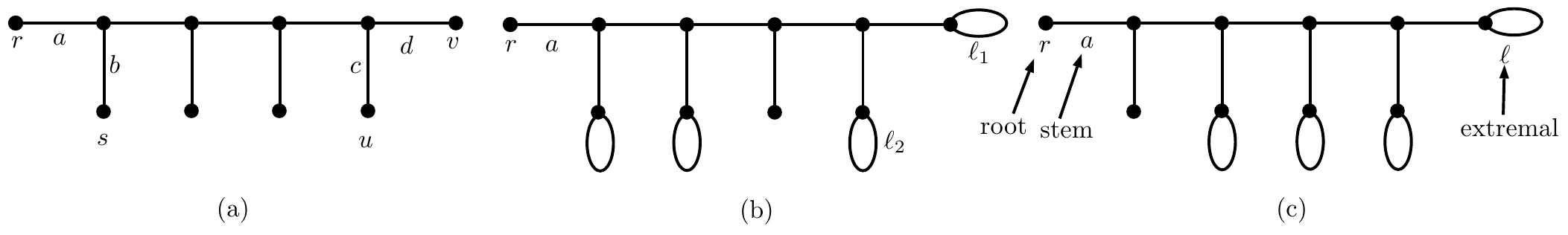}}
  \caption{(a) Caterpillar with 6 legs, with roots $r, s, u, v$  and stems $a, b, c, d$.
    (b) $(2,4)$-caterpillar with root $r$, stem $a$, and extremal loops $\ell_1$ and $\ell_2$.
    (c) $(2,4)$-caterpillar~$G_{2,4}$ with root $r$, stem $a$, and extremal loop $\ell$.}\label{fig:Ghk}
\end{figure}

\subsection{Nearest neighbor interchanges}

This section is devoted to simplify the summation in~\eqref{eq:partition}.
In the following two paragraphs we outline how we deal with the left and 
right side of the summation respectively. 

The cyclomatic number of $G_{h,k}$ is $k$.
From Wakabayashi~\cite[Theorem~A(ii)]{Wakabayashi2013} we know that if $G$ is a connected
$\{1,3\}$-graph with $h$ leaf-edges and cyclomatic number $k$,
then $L^{\calP}_G=L^{\calP}_{h,k}$.
The foremost step is to establish that $L^{\calQ}_G=L^{\calQ}_{h,k}$.

An internally Eulerian subgraph $H$ of any $(h,k)$-caterpillar
consists of a disjoint collection of leaf-paths and loops.
The next step is to prove that the value of
$\vol(G_{h,k},H)$ depends only on the number of leaf-paths and loops in~$H$.
Finally, we show that $\vol(G_{h,k},H)$ actually depends
on the number of leaf-paths in~$H$ and on whether or not $H$ has a loop.
For all these tasks we shall need the machinery of the nearest neighbor interchange.


A \emph{nearest neighbor interchange (NNI)} is a local invertible move performed 
in $G$ on a trail of length three, marked by dark edges in Figure~\ref{fig:nni}.  
This move interchanges one end of the two extreme edges of the trail on the central edge.

\begin{figure}[htbp]
  \centering
  \includegraphics{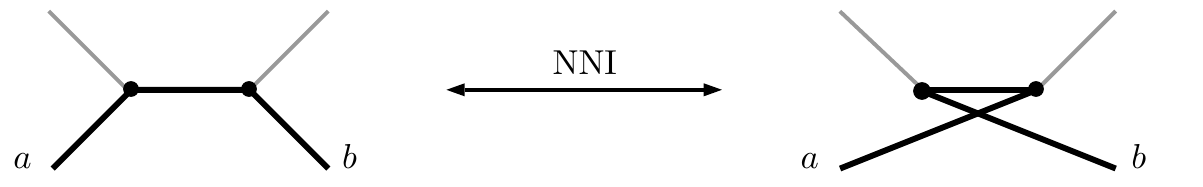}
  \caption{An NNI move on the trail marked by bold darker line segments.
    One of the incidences of the edges~$a$ and $b$ were interchanged.}
  \label{fig:nni}
\end{figure}

We refer to the central edge of the trail as the \emph{pivot} of the NNI move.
The result of the move is another $\{1,3\}$-graph~$G'$ on the same number of 
nodes, edges, and connected components. We call $\{G,G'\}$ an \emph{NNI pair}.

Consider an NNI pair $\{G,G'\}$.
Let $w$ and $z$ be weight functions defined on the edges and internal nodes of~$G$, respectively.
A \emph{weighted NNI} is a local invertible move performed on~$(G,w,z)$.
It is induced by the NNI pair.
The result of the move is~$(G', w', z')$ where~$w'$ and~$z'$ are 
weight functions on the edges and internal nodes of~$G'$ defined as follows.
Suppose that~$e=uv$ is the pivot of the NNI move.
Let~$a$ and~$b$ be the first and last edges in the trail,
and~$c$ and~$d$ be the remaining edges adjacent to~$e$,
with their corresponding weights depicted in
Figure~\ref{fig:weighted-nni}.
Note that $a$, $b$, $c$, and $d$ are not necessarily pairwise distinct.
The weight function $w'$, defined as in~\cite{FernandesPRAR2020},
is such that $w'_f = w_f$ for every~$f \neq e$ and
$$ w'_e = w_e + \max\{w_a+w_c,w_b+w_d\} - \max\{w_b+w_c,w_a+w_d\}. $$
The weight function $z$ is such that $z'_x = z_x$ for every $x \not\in \{u,v\}$,
$$z'_u = (w'_b+w'_c+w'_e)/2, \quad \mbox{and} \quad z'_v = (w'_a+w'_d+w'_e)/2.$$

\begin{figure}[htbp]
  \centering
  \includegraphics{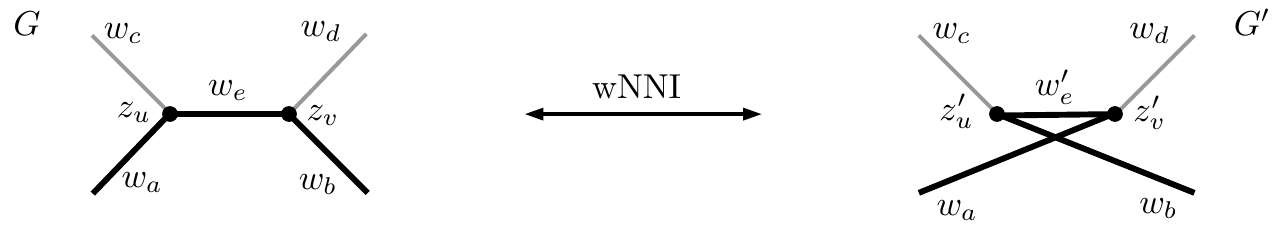}
  \caption{Weighted NNI move on the trail marked by bold darker line segments.
           Weights modified are the ones on the pivot edge and its end nodes.}
  \label{fig:weighted-nni}
\end{figure}

Let us restrict our attention to integer points 
satisfying the parity constraint~\eqref{eq:parity}.
If~$(w,z)$ is one of these points, then so is~$(w',z')$.
In fact, for any internally Eulerian subgraph $H$ of $G$, 
a weighted NNI move maps points in $\parte(G,H)$ into $\parte(G',H')$,
where $H'$ is uniquely determined by $\{G, G'\}$ and $H$.
Indeed, a weighted NNI acts on the parity (of the weight) of the NNI pivot based uniquely
on the parity of the edges incident to it (Figure~\ref{fig:parity-weighted-nni}).
Therefore, as a weighted NNI move is invertible, there is a bijection 
between~$\parte(G,H)$ and~$\parte(G',H')$ for every corresponding pair $H$ and $H'$.
We call $\{(G,H),(G',H')\}$ a \emph{weighted NNI pair}.

\begin{figure}[bhtp]
  \centering
  \scalebox{1.1}{\includegraphics{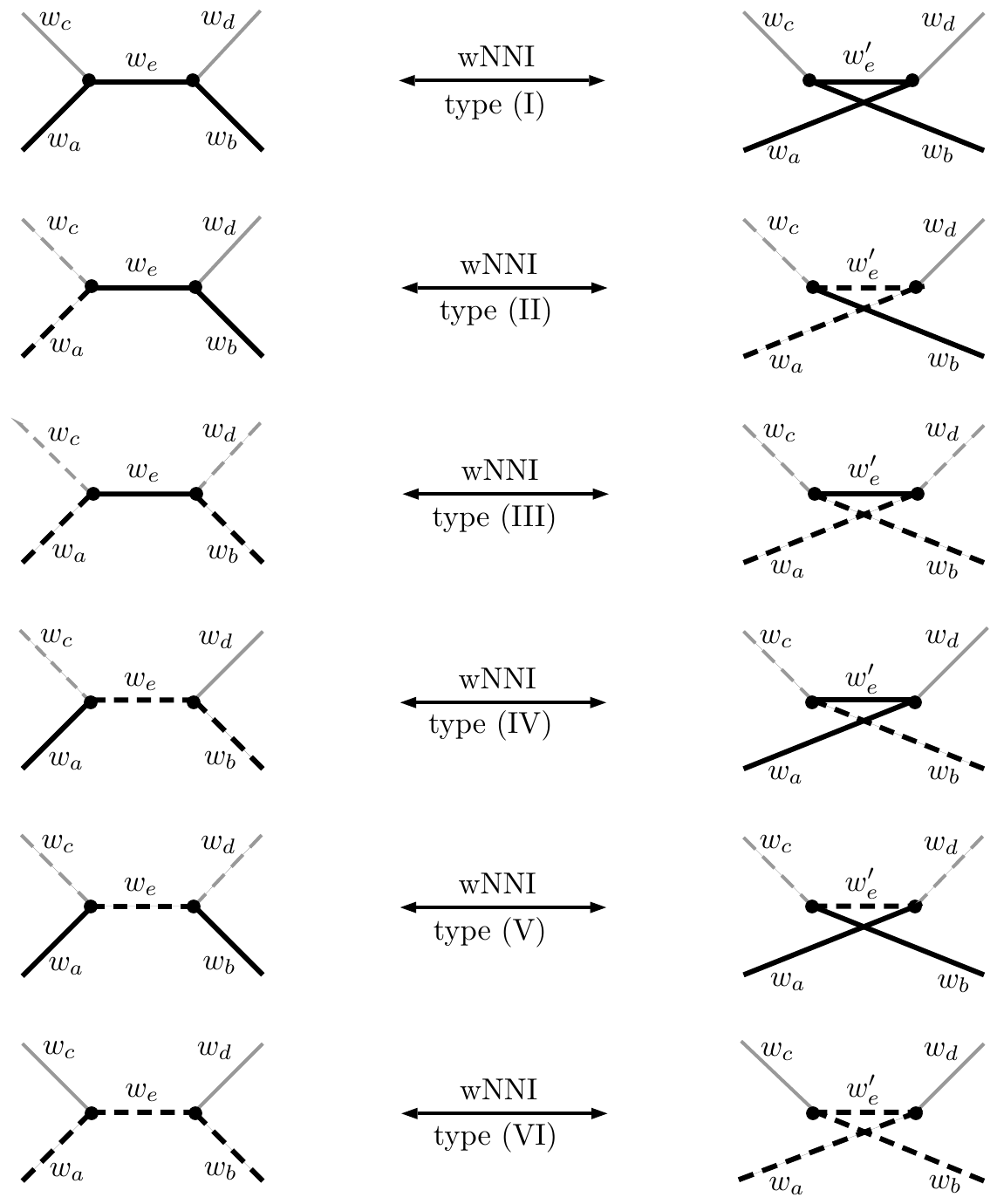}}
  \caption{Parity of weights on edges modified by a weighted NNI on a
    trail marked by bold darker line segments.
    Dashed line segments indicate edges with odd weights, and
    continuous line segments indicate edges with even weights.
    }
  \label{fig:parity-weighted-nni}
\end{figure}

The following lemma that summarizes the previous discussion is 
applied extensively, explicitly and implicitly, in several proofs ahead. 
\begin{lem}\label{claim:NNI}
  Let $\{(G,H),(G',H')\}$ be a weighted NNI pair.
  The corresponding weighted NNI is a bijection between 
  $\parte(G,H)$ and $\parte(G',H')$ and therefore 
  $$\vol(G,H) = \vol(G',H').$$ 
\end{lem}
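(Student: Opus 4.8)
The plan is to prove Lemma~\ref{claim:NNI} by verifying carefully the claim made informally in the preceding paragraphs: that a weighted NNI move is a well-defined involutive-style bijection on integer points obeying the parity constraint~\eqref{eq:parity}, and that it carries $\parte(G,H)$ onto $\parte(G',H')$ for the uniquely-determined pair $H'$. The heart of the argument is local, concentrated entirely on the pivot edge $e = uv$ and its four neighbouring edges $a,b,c,d$; all other coordinates are fixed by the definition of $w'$ and $z'$, so we only need to check the inequalities and equalities of the two linear systems $S^{\calQ}_t(u)$, $S^{\calQ}_t(v)$ before and after the move, and nothing else changes.

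First I would set up notation exactly as in Figure~\ref{fig:weighted-nni}, writing $\mu := \max\{w_a+w_c, w_b+w_d\}$ and $\nu := \max\{w_b+w_c, w_a+w_d\}$, so that $w'_e = w_e + \mu - \nu$, and noting the symmetric formula $w_e = w'_e + \nu' - \mu'$ with the roles of the two maxima swapped, which gives invertibility once we know $w'$ lands in the right polytope. The key algebraic observation is that $\{w_a+w_c+w_e,\ w_b+w_d+w_e\}$ and $\{w_b+w_c+w'_e,\ w_a+w_d+w'_e\}$ are, as multisets, related so that the new ``perimeters'' at $u$ and $v$ are $w'_b+w'_c+w'_e$ and $w'_a+w'_d+w'_e$; I would check that each of these equals $2z'_u$, resp.\ $2z'_v$ (this is literally the definition of $z'$), that $z'_u, z'_v$ are integers (because the relevant edge-weight sums have the same parity as the old perimeters, which were even by~\eqref{eq:parity}), and that the triangle inequalities $S^{\triangle}$ at $u$ and at $v$ continue to hold for $w'$ — this last point is the content of the result of~\cite{FernandesPRAR2020} that the cited $w'$ formula was designed to guarantee, so I would invoke it, and similarly confirm $z'_u, z'_v \le t$ follows since $z'_u + z'_v \le$ (something bounded by the old constraints). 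Then I would observe that the map $(w,z)\mapsto(w',z')$ is its own ``mirror'' under the NNI pair $\{G,G'\}$, hence a bijection on the integer points satisfying the parity constraint.

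Next I would pin down $H'$ and the statement that the bijection restricts to $\parte(G,H)\to\parte(G',H')$. The point is that membership of an integer point in $\parte(G,H)$ is equivalent to saying that the set of edges with odd weight is exactly $E(H)$. Since $w'_f = w_f$ for $f\ne e$, only the parity of the pivot can change; and as indicated in Figure~\ref{fig:parity-weighted-nni}, the parity of $w'_e$ is determined solely by the parities of $w_a, w_b, w_c, w_d$ (one checks $\mu - \nu$ modulo $2$ depends only on those four parities — e.g.\ if an even number of $a,b,c,d$ are odd then $w_e$ and $w'_e$ have the same parity, and the odd-edge set of $G'$ is the image of $E(H)$ under the edge relabelling induced by the NNI). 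Hence for each internally Eulerian $H$ in $G$ there is a unique internally Eulerian $H'$ in $G'$ with the property that the NNI carries odd-edge-set $E(H)$ to odd-edge-set $E(H')$, and the bijection on all parity-constrained points restricts to a bijection $\parte(G,H)\to\parte(G',H')$. Taking cardinalities gives $\vol(G,H)=\vol(G',H')$.

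The main obstacle is the purely computational verification that the specific formula for $w'_e$ preserves the triangle inequalities at both $u$ and $v$ simultaneously while also keeping the perimeter/parity bookkeeping consistent — one has to split into the cases according to which of the two expressions achieves each maximum $\mu$ and $\nu$, and check a handful of inequalities in each case. Since this is exactly the invariant established in~\cite{FernandesPRAR2020} for the $\calP$-side (only the edge weights $w$, not the node variables $z$), I would lean on that citation for the triangle inequalities and supply only the short additional check that the $z'$-variables defined by the displayed formulas are the (unique, by the parity equations) integer values making $(w',z')$ satisfy $S^{\calQ}_t(u)$ and $S^{\calQ}_t(v)$, together with the parity argument of the previous paragraph. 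The rest is routine.
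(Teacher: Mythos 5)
Your proposal is correct and follows essentially the same route as the paper, which offers no separate proof of Lemma~\ref{claim:NNI} beyond the preceding discussion (the formula for $w'$ taken from~\cite{FernandesPRAR2020}, the definition of $z'$, the parity analysis of the pivot in Figure~\ref{fig:parity-weighted-nni}, and invertibility of the move) --- you are simply making those checks explicit. Two small cautions: for $z'_u,z'_v\le t$ do not bound the sum, but use that each new half-perimeter is at most the larger of the old ones, e.g.\ $2z'_u = w_b+w_c+w'_e \le w_e + \max\{w_a+w_c,\,w_b+w_d\} = \max\{2z_u,2z_v\} \le 2t$ (symmetrically for $z'_v$); and the old and new half-perimeters need not agree as multisets (take $w_a=1,w_b=2,w_c=3,w_d=4,w_e=2$), though your argument never actually relies on that phrase.
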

\begin{figure}[htb]  
\begin{center}
\begin{minipage}[h]{0.5\textwidth}
  \scalebox{.9}{\includegraphics{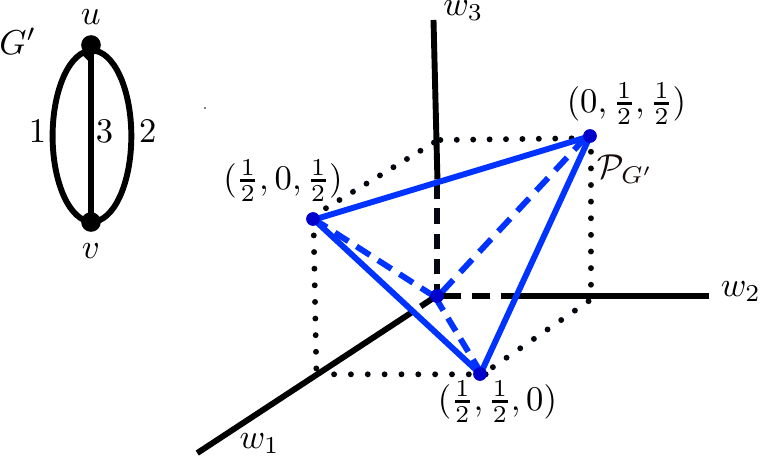}}
  \begin{align}
   L^{\calP}_{G'}(t) & =  \frac{1}{24} t^3 + \frac{1}{4} t^2 + \,
    \left\{\begin{array}{ll}
             \frac{5}{6} t + 1,   & \mbox{if $t$ is even} \\[1mm] 
             \frac{11}{24} t + \frac{1}{4}, & \mbox{if $t$ is odd}
           \end{array}\right. \nonumber\\
   L^{\calQ}_{G'}(t) & = \frac16 t^3 + t^2 + \frac{11}6 t + 1 \nonumber
\end{align}
\end{minipage}
\begin{minipage}[h]{0.4\textwidth}
{\small
  \[
    t\calP_{G'} = \left\{ 
    \begin{array}{rcl}
      w_1 & \le & w_2 + w_3\\
      w_2 & \le & w_1 + w_3\\
      w_3 & \le & w_1 + w_2\\
      w_1 + w_2 + w_3 & \le & t 
    \end{array} \right.\\
\]
\[
  t\calQ_{G'} = \left\{
  \begin{array}{rcl}
    w_1 & \le & w_2 + w_3\\
    w_2 & \le & w_1 + w_3\\
    w_3 & \le & w_1 + w_2\\
    w_1 + w_2 + w_3 & = & 2z_v \\
    w_1 + w_2 + w_3 & = & 2z_u \\
      z_v & \le & t \\ 
      z_u & \le & t     \end{array} \right.
  \]
}
\end{minipage}
\end{center}
\caption{
  The cubic graph $G'$ is obtained from applying an NNI to the graph $G$ in Figure~\ref{fig:involution}.
  The pivot of the NNI was the edge~3.
  The polytope $\calP_{G'}$, the linear systems of $t\calP_{G'}$
  and $t\calQ_{G'}$, and the Ehrhart quasi-polynomials associated to $G'$ are
  identical to the ones associated to the $\{1,3\}$-tree $T$
  in Figure~\ref{fig:involutionT}.
  The points in $t\calP_{G'}$ have coordinates~$(w_1,w_2,w_3)$ and the points in $t\calQ_{G'}$
  have coordinates~$((w_1,w_2,w_3),(z_v,z_u))$.
} 
\label{fig:involutionP}
\end{figure}
  
\begin{example}
  The graph $G$ in Figure~\ref{fig:involution} and $G'$ in Figure~\ref{fig:involutionP} form an
  NNI pair $\{G, G'\}$.
  Consider the polytope $t\calQ_{G'}$ in Figure~\ref{fig:involutionP}.
  The graph~$G'$ has four internally Eulerian subgraphs, namely, the subgraphs
  $\emptyset, H'_{\{1,2\}}, H'_{\{1,3\}}$  and  $H'_{\{2,3\}}$ induced by the
  edge sets $\emptyset$, $\{1,2\}$, $\{2,3\}$, and~$\{1,3\}$, respectively.
  One can verify that
\[
\begin{array}{ll}
\parteu_2(G',\emptyset) & = \{((0,0,0), (0,0)), ((2,2,0), (2,2)), ((2,0,2), (2,2)), ((0,2,2), (2,2)) \}, \nonumber \\  
\parteu_2(G',H'_{\{1,2\}}) & = \{ ((1,1,0), (1,1)), ((1,1,2), (2,2)) \}, \nonumber \\  
\parteu_2(G',H'_{\{1,3\}}) & = \{ ((1,0,1), (1,1)), ((1,2,1), (2,2)) \}, \mbox{ and } \nonumber \\  
\parteu_2(G',H'_{\{2,3\}}) & = \{ ((0,1,1), (1,1)), ((2,1,1), (2,2)) \}. \nonumber
\end{array}
\]
The corresponding weighted NNI pairs are
$\{ (G,\emptyset),  (G',\emptyset) \}$,
$\{ (G, H_{\{1\}}),   (G',H'_{\{1,3\}}) \}$,
$\{ (G, H_{\{2\}}),   (G',H'_{\{2,3\}}) \}$,
$\{ (G, H_{\{1,2\}}), (G',H'_{\{1,2\}}) \}$.
Therefore,
$\vol(G,H) = \vol(G',H')$ for each  weighted NNI pair $\{(G,H),(G',H')\}$,
confirming the previous Lemma \ref{claim:NNI}.
\hfill$\square$
\end{example}


Having the tool of weighted NNIs in hand, we are ready to follow the steps delineated.

\begin{lem}\label{lem:samepolynomials} 
  If $G$ is a connected $\{1,3\}$-graph with $h$ leaf-edges and cyclomatic number $k$ then
  $L^{\calQ}_G(t) = L^{\calQ}_{h,k}(t)$ for each nonnegative integer $t$.
\end{lem}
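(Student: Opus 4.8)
The plan is to prove Lemma~\ref{lem:samepolynomials} by showing that any two connected $\{1,3\}$-graphs with the same number of leaf-edges $h$ and the same cyclomatic number $k$ can be transformed into one another by a finite sequence of NNI moves, and then invoking Lemma~\ref{claim:NNI} together with the partition identity~\eqref{eq:partition}. More precisely, recall from~\eqref{eq:partition} that $L^{\calQ}_G(t) = \sum_H \vol(G,H)$, where the sum is over all internally Eulerian subgraphs $H$ of $G$. Given an NNI pair $\{G,G'\}$, the correspondence $H \mapsto H'$ described before Lemma~\ref{claim:NNI} is a bijection between the internally Eulerian subgraphs of $G$ and those of $G'$ (it is determined by the local parity bookkeeping of Figure~\ref{fig:parity-weighted-nni} and is invertible because the NNI move is). Combining this bijection with the equality $\vol(G,H) = \vol(G',H')$ from Lemma~\ref{claim:NNI} gives $L^{\calQ}_G(t) = L^{\calQ}_{G'}(t)$ for every nonnegative integer $t$. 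Hence $L^{\calQ}$ is an NNI-invariant, and it suffices to connect $G$ to $G_{h,k}$ by NNIs.

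First I would establish the NNI-invariance of $L^{\calQ}_G(t)$ as above: fix an NNI pair $\{G,G'\}$, check that $H\mapsto H'$ is a well-defined involution-like bijection on internally Eulerian subgraphs (using that a weighted NNI preserves the parity constraint~\eqref{eq:parity}, as already observed), and then sum the per-part equalities of Lemma~\ref{claim:NNI} over all $H$ to conclude $L^{\calQ}_G = L^{\calQ}_{G'}$. Next I would prove the connectivity statement: every connected $\{1,3\}$-graph $G$ with $h$ leaf-edges and cyclomatic number $k$ is NNI-equivalent to the canonical $(h,k)$-caterpillar $G_{h,k}$. The natural approach is an induction on some complexity measure of $G$ (for instance, a quantity measuring how far the internal nodes are from lying on a single path, or the number of internal edges not on a longest path), showing that if $G \neq G_{h,k}$ then there is an NNI move bringing $G$ strictly closer to caterpillar form — straightening branch points and sliding loops and legs so that loops become consecutive and legs become consecutive. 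Since NNI preserves the number of nodes, edges, connected components, leaf-edges, and the cyclomatic number, the target is always a valid $(h,k)$-caterpillar, and $G_{h,k}$ is a canonical representative within that NNI-equivalence class. Finally, chaining the NNI moves and applying the invariance from the first step yields $L^{\calQ}_G(t) = L^{\calQ}_{G_{h,k}}(t) = L^{\calQ}_{h,k}(t)$.

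I expect the main obstacle to be the connectivity claim — proving that NNIs suffice to reach the canonical form $G_{h,k}$ — rather than the invariance, which is essentially immediate from Lemma~\ref{claim:NNI}. One has to argue carefully that a single NNI can always be chosen to make measurable progress: moving a loop along the tree backbone, merging a branching internal node into the central path, or reordering legs so that the loop-bearing ones are grouped together. Some care is needed with degenerate configurations (loops adjacent to each other, parallel edges created transiently, the case $h = 0$ of a cubic graph, or small graphs where the ``central path'' is empty), and with verifying that each intermediate graph remains a connected $\{1,3\}$-graph so that $L^{\calQ}$ is defined throughout. A clean way to organize this is to first reduce to the underlying caterpillar-like tree structure (handling legs and branch points) and then separately normalize the placement of the $k$ loops, invoking at each stage that the relevant quantities ($h$, $k$, connectedness) are NNI-invariants so the endpoint is forced to be $G_{h,k}$.
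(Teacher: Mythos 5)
Your proposal is correct and follows essentially the same route as the paper: NNI-invariance of $L^{\calQ}_G(t)$ via the partition~\eqref{eq:partition} and Lemma~\ref{claim:NNI}, combined with NNI-connectivity of $G$ to the caterpillar $G_{h,k}$. The only difference is that the step you anticipate as the main obstacle --- transforming $G$ into $G_{h,k}$ by a sequence of NNI moves --- is not reproved in the paper but simply cited as a known result (Theorem~1 of~\cite{FernandesPRAR2020}), so your sketched induction on a complexity measure is unnecessary.
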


\begin{proof}
  The graph $G$ can be transformed into $G_{h,k}$ through a series of NNI moves~\cite[Theorem~1]{FernandesPRAR2020}.
  We have that \newpage
  \begin{align}
    L^{\calQ}_G(t) & =  \sum_H \vol(G,H) \label{eq:a} \\
    & =  \sum_H \vol(G_{h,k},H) \label{eq:b} \\
    & = L^{\calQ}_{h,k}(t),  \label{eq:c}
  \end{align}
  where the summations in~\eqref{eq:a} and in~\eqref{eq:b} are over
  the internally Eulerian subgraphs $H$ of~$G$ and~$G_{h,k}$,
  respectively.
  Equalities~\eqref{eq:a} and~\eqref{eq:c} are due to the summation in~\eqref{eq:partition},
  and   Equality~\eqref{eq:b} follows from Lemma  \ref{claim:NNI}    
   by induction on the number of NNI moves.
\end{proof}

Now, we prove that $\vol(G_{h,k},H)$ depends only on the number of 
leaf-paths and loops in $H$.

\begin{lem}\label{lem:sametypecosets}
  If $H$ and $H'$ are internally Eulerian subgraphs of~$G_{h,k}$ 
  with the same number of leaf-paths and loops, 
  then $$\vol(G_{h,k},H) = \vol(G_{h,k},H'),$$
  for each nonnegative integer $t$.
\end{lem}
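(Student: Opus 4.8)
The plan is to prove Lemma~\ref{lem:sametypecosets} by reducing it, via a sequence of weighted NNI moves, to the statement that any two internally Eulerian subgraphs with the same number of leaf-paths and loops can be transformed one into the other within $G_{h,k}$ by NNI moves that carry $\parteu_t(G_{h,k},H)$ bijectively onto $\parteu_t(G_{h,k},H')$. In fact, since a weighted NNI pair $\{(G,H),(G',H')\}$ can have $G=G'$ when the pivot lies on a closed trail of length three (so the move is an \emph{internal} NNI fixing the underlying graph but permuting its internally Eulerian subgraphs), the key will be to show that the group of permutations of the internally Eulerian subgraphs of $G_{h,k}$ generated by such self-NNI moves acts transitively on each ``type'' (= number of leaf-paths, number of loops). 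Once that is established, Lemma~\ref{claim:NNI} immediately gives $\vol(G_{h,k},H)=\vol(G_{h,k},H')$.

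First I would set up the combinatorial description: an internally Eulerian subgraph $H$ of $G_{h,k}$ is a disjoint union of leaf-paths (each running between two leaves through a contiguous stretch of the central path) and of loops (each a single edge $\ell_i$ at a loop-node). I would observe that the ``type'' of $H$ records how many of each it has. Then I would exhibit the two basic local moves that do not change the type and are realized by weighted NNIs on $G_{h,k}$: (i) a move that slides the endpoint of a leaf-path from one leg to an adjacent leg along the central path (pivot = a central-path edge, with one extreme edge a leg in $H$ and the other extreme edge the next leg not in $H$); and (ii) a move that ``rotates'' a loop past an adjacent free leg or past a leaf-path endpoint, using the pivot edge incident to the loop-node. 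I would check in each case that the NNI is a \emph{self}-NNI of $G_{h,k}$ (same underlying graph) and that it simply reindexes which edges of the two interchanged legs/loops carry the Eulerian structure, hence preserves the counts of leaf-paths and loops.

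The heart of the argument is then a connectivity claim: using moves (i) and (ii) one can normalize any $H$ of a given type to a fixed canonical representative $H_{\mathrm{type}}$ — say, all loops placed on the first available loop-legs, all leaf-path endpoints pushed to the leftmost legs, leaf-paths made as ``short'' as possible. I would prove this by induction, e.g. on the total displacement of the legs/loops of $H$ from their canonical positions, showing that each move strictly decreases this quantity while staying within the type. One has to be a little careful about the interaction between a loop and a leaf-path endpoint sitting on adjacent legs, and about the two roots/stems at the ends of the central path where the local picture degenerates; those boundary cases should be handled by a direct check. Having normalized both $H$ and $H'$ to the same $H_{\mathrm{type}}$ through sequences of self-NNIs of $G_{h,k}$, Lemma~\ref{claim:NNI} (applied repeatedly, with $G=G'=G_{h,k}$) yields $\vol(G_{h,k},H)=\vol(G_{h,k},H_{\mathrm{type}})=\vol(G_{h,k},H')$.

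The main obstacle I anticipate is bookkeeping rather than conceptual: verifying that each proposed NNI move is genuinely a weighted NNI on a length-three trail of $G_{h,k}$ (the trail need not be a path — it can repeat a node, which is exactly what happens at a loop), that it fixes the underlying graph, and that it induces precisely the claimed bijection on internally Eulerian subgraphs without altering the type. The degenerate configurations — a loop adjacent to a leaf-path's endpoint, a move involving the stems at a root, or the case $h=0$ where $G_{h,k}$ is cubic and there are no leaf-paths at all so only the loop-rotation moves are available — will each need their own short verification, and together these constitute the bulk of the work. Once the move set is shown to act transitively on each type, the conclusion is immediate from Lemma~\ref{claim:NNI}.
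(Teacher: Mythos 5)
Your overall strategy is the paper's own: transform $H$ into $H'$ by a sequence of weighted NNIs that stay inside $G_{h,k}$ up to relabeling, and then compose the bijections supplied by Lemma~\ref{claim:NNI}; the paper's proof does exactly this by swapping consecutive legs of $G_{h,k}$ ``and an appropriated relabeling'' (citing Corollary~10 of~\cite{FernandesPRAR2020}). However, two concrete claims in your move set would fail the verification you yourself make the crux of the argument. First, the leg-swapping move (i) is \emph{not} a self-NNI: its trail (leg, central-path edge, leg) is open, and after the interchange the two legs hang from the opposite endpoints of the pivot, so the resulting labelled graph is not $G_{h,k}$ but only isomorphic to it. The ``closed trail of length three'' mechanism you invoke is not what is at work; what is needed instead is to compose the bijection of Lemma~\ref{claim:NNI} with the bijection induced by the isomorphism identifying the swapped graph with $G_{h,k}$ --- this is precisely the paper's relabeling step, and once it is added your argument goes through. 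Second, move (ii) as described --- pivoting on the leg edge incident to the loop-node --- leaves the caterpillar family altogether: that NNI detaches the loop into a pair of parallel edges, so the result is not $G_{h,k}$ even up to isomorphism (such pivots appear in the paper only inside longer compositions, as in Figures~\ref{fig:decreaseloops} and~\ref{fig:increaseloop}, and for different lemmas). The correct loop-relocating move is again a swap of two adjacent loop-legs with the central-path edge between them as pivot, carrying each loop, and the parity of its weight, along with its leg.

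A further simplification you are missing: in $G_{h,k}$ the loop-legs form one consecutive block and the plain legs another, a disjoint collection of leaf-paths is determined by its set of endpoint legs, and the loops of $H$ are simply a subset of the loops of $G_{h,k}$. Hence adjacent transpositions \emph{within each block} already realize every permutation needed to carry the endpoint set and the loop set of $H$ onto those of $H'$; loops never have to be moved past leaf-path endpoints, so the delicate interaction cases and the displacement-induction canonical-form argument you anticipate are unnecessary. With the relabeling step added and move (ii) corrected, your proof coincides with the paper's.
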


\begin{proof}
  Consider a series of weighted NNIs taking the loops in~$H$ 
  to the ones in~$H'$, by swapping consecutive legs of $G_{h,k}$ and an 
  appropriated relabeling.  
  Similarly, consider a series of weighted NNIs taking the 
  leaf-edges of leaf-paths in~$H$ to the ones in~$H'$, by swapping 
  consecutive legs of $G_{h,k}$ and an appropriated relabeling.  
  The composition of the corresponding bijections is 
  a bijection between $\parte(G_{h,k},H)$ and $\parte(G_{h,k},H')$.
  Indeed, the used NNIs are as in Corollary~10 from~\cite{FernandesPRAR2020}, 
  and preserve the parity of the values on the corresponding leaf-edges and loops.
  So the lemma follows. 
\end{proof}

Any internally Eulerian subgraph of $G_{h,k}$ consists of 
a disjoint collection of $i$ leaf-paths and $j$ loops.  
Inspired by Lemma~\ref{lem:sametypecosets}, we shall start to use the following notation:
$H_{i,j}$  denotes any internally Eulerian subgraph with $i$ leaf-paths and $j$ loops.  
Recalling~\eqref{eq:partevol} in terms of the present notation, we have 
\begin{align}
 \parte(G_{h,k},H_{i,j}) & := t\calQ_{G_{h,k} }\cap (\calL_{G_{h,k}} + (\um_{H_{i,j}},0)), \mbox{ and} \nonumber \\
 \vol(G_{h,k},H_{i,j})  & := |\parte(G_{h,k},H_{i,j})|. \nonumber
\end{align}
Thus we may rewrite the summation in~\eqref{eq:partition}, namely that 
  $L^{\calQ}_G(t) \ = \ \sum_{H}   \vol(G,H)$,  to obtain 
\begin{equation}
  \label{eq:partition2}
L^{\calQ}_{h,k}(t) \ = \ \sum_{H_{i,j}}  
       \vol(G_{h,k},H_{i,j}) \ = \ \sum_{i=0}^{\floor{h/2}} {h \choose 2i} \sum_{j=0}^k{k \choose j}\vol(G_{h,k},H_{i,j}),
\end{equation}
where the first summation is over all internally Eulerian subgraphs $H_{i,j}$ of $G_{h,k}$.

Next we strengthen Lemma~\ref{lem:sametypecosets} and show that $\vol(G_{h,k},H)$ 
does not depend on the exact number of loops in~$H$, but only on whether $H$ has a loop or not. 
In other words, the value of $\vol(G_{h,k},H_{i,j})$ depends only on whether $j=0$ or $j \geq 1$.
This will allow us to simplify the summation in~\eqref{eq:partition2}.

\begin{lem}\label{lem:onelooptype}
  Let $h$ and $k$ be nonnegative integers such that $k \geq 1$ and $h+k \geq 2$.
  For every nonnegative integer $t$, $i = 0,\ldots,\lfloor h/2 \rfloor$, and $j=1,\ldots,k,$
  we have that $$\vol(G_{h,k},H_{i,j})= \vol(G_{h,k},H_{i,1}).$$
\end{lem}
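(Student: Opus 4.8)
The statement says that for an $(h,k)$-caterpillar $G_{h,k}$ with $k \geq 1$, the quantity $\vol(G_{h,k}, H_{i,j})$ is independent of $j$ as long as $j \geq 1$; it suffices to reduce the case $j \geq 2$ to the case $j = 1$. Combined with Lemma~\ref{lem:sametypecosets}, the plan is to fix $i$ and an internally Eulerian subgraph $H$ with $i$ leaf-paths and $j \geq 2$ loops, and exhibit a bijection between $\parte(G_{h,k}, H)$ and $\parte(G_{h,k}, H')$ for a suitably chosen $H'$ with $i$ leaf-paths and $j-1$ loops. The natural mechanism is again a weighted NNI (Lemma~\ref{claim:NNI}), but this time the move must \emph{merge} two of the loops of $H$ (or, more precisely, change the parity pattern so that one fewer loop carries an odd weight) rather than merely relocate them. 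So the first step is to use Lemma~\ref{lem:sametypecosets} to move, without changing $\vol$, to a canonical representative $H$ in which two of the loops sit on consecutive legs near one end of the central path — say on legs incident to nodes $u$ and its neighbour — so that there is a short trail through the central edge between them on which we can perform an NNI.

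\textbf{The key move.} With two loops placed on adjacent legs $\ell_1, \ell_2$ whose stems meet the central path at consecutive nodes joined by a central edge $e = uv$, I would consider the NNI whose pivot is $e$ and whose extreme edges are $\ell_1$ (at $u$) and $\ell_2$ (at $v$). Performing this NNI on the underlying graph produces a graph $G'$ in which the two loops have been replaced by a single loop together with an extra leaf-edge pair — in effect one of the two ``small cycles'' has been opened up. The precise combinatorial bookkeeping here must be checked against the NNI definition in the excerpt and against the catalogue of moves in~\cite{FernandesPRAR2020}: the point is that after the move, $G'$ is still (isomorphic to, after relabelling) an $(h,k)$-caterpillar with the same parameters, since NNI preserves the number of nodes, edges, components, and — crucially — leaf-edges and cyclomatic number. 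Applying Lemma~\ref{lem:samepolynomials} (or directly Lemma~\ref{claim:NNI}) we get that the weighted NNI is a bijection $\parte(G,H) \to \parte(G',H')$ where $H'$ is the image internally Eulerian subgraph; reading off the parity pattern (as in Figure~\ref{fig:parity-weighted-nni}) shows $H'$ has $j-1$ loops and still $i$ leaf-paths. Iterating, $\vol(G_{h,k}, H_{i,j}) = \vol(G_{h,k}, H_{i,j-1}) = \cdots = \vol(G_{h,k}, H_{i,1})$, and then another application of Lemma~\ref{lem:sametypecosets} transports the result back from the relabelled caterpillar to $G_{h,k}$ itself.

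\textbf{Main obstacle.} The delicate point is verifying that the specific NNI described actually changes the loop count by exactly one \emph{while fixing} the number of leaf-paths, and that after relabelling the resulting graph is genuinely $G_{h,k}$ (same $h$ and $k$), not some caterpillar with a different leaf/loop split. Two loops on consecutive legs involve a configuration where the ``extreme edges'' of the NNI trail may be loops, so one must be careful that the NNI as defined in the excerpt is still applicable (the trail has length three, using both half-edges of one loop) and that the weighted version's formula for $w'_e$ and the parity rule of Figure~\ref{fig:parity-weighted-nni} give the claimed parity outcome on the pivot. I expect this to be handled by a careful case check on the parities of the edges incident to $e$ together with an appeal to the already-established general principle that a weighted NNI acts on the pivot's parity as a function of the incident parities only; the bijectivity and volume equality then come for free from Lemma~\ref{claim:NNI}. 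The rest — choosing the canonical placement of the loops, and the final relabelling — is routine in light of Lemma~\ref{lem:sametypecosets}.
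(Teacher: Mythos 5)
Your overall strategy coincides with the paper's: use Lemma~\ref{lem:sametypecosets} to move to a canonical placement of the loops, then produce a parity-changing bijection via weighted NNIs and Lemma~\ref{claim:NNI}, and induct on $j$. However, the key move you propose does not work as described, and this is exactly the non-routine content of the lemma. First, the trail you specify does not exist: in $G_{h,k}$ a loop sits at the far end of a leg, so the loops $\ell_1,\ell_2$ are not incident to the endpoints $u,v$ of the central edge $e$, and hence $(\ell_1,e,\ell_2)$ is not a trail of length three on which an NNI can be performed. Second, an NNI preserves the degree sequence (each of $u$ and $v$ keeps three edge-ends, and the far endpoints of the extreme edges keep their degrees), so it cannot replace two loops by ``a single loop together with an extra leaf-edge pair'': no new degree-$1$ nodes can appear. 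What a single NNI involving a loop actually does is open that loop into a parallel edge, producing a graph $G'$ with a $2$-cycle which is \emph{not} an $(h,k)$-caterpillar; Lemma~\ref{claim:NNI} then only gives $\vol(G_{h,k},H)=\vol(G',H')$ for this different graph $G'$, which is not the self-comparison $\vol(G_{h,k},H_{i,j})=\vol(G_{h,k},H_{i,j-1})$ that the induction needs. (Your appeal to ``same number of leaf-edges and cyclomatic number, hence isomorphic to $G_{h,k}$ after relabelling'' is a non sequitur: these invariants do not determine the graph, and Lemma~\ref{lem:samepolynomials} compares $L^{\calQ}$, not individual coset volumes $\vol(G,H)$.)

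Moreover, the parity rule you cite works against a one-move argument: a weighted NNI changes the parity only of the pivot edge, so no single move can both return to (a relabelling of) $G_{h,k}$ and turn an odd loop even; the moves that do return immediately to the caterpillar are precisely the leg swaps of Lemma~\ref{lem:sametypecosets}, which leave the number of odd loops unchanged. The paper resolves this by exhibiting an explicit \emph{composition} of weighted NNIs (four in the general case, two in the degenerate case $h=0$, $k=2$, see Figure~\ref{fig:decreaseloops}) that passes through intermediate non-caterpillar graphs (with $2$-cycles) and ends back at $G_{h,k}$ after relabelling, with the net effect on parities being exactly ``two odd loops become one odd loop''; bijectivity and equality of volumes then follow from Lemma~\ref{claim:NNI} applied along the chain. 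Your ``careful case check'' placeholder is where this composition has to be constructed and verified, so as written the proposal has a genuine gap at the decisive step, even though the surrounding scaffolding (canonical placement, induction on $j$, final relabelling) matches the paper.
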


\begin{proof}
  If $h=i=0$ and $k=j=2$, then the composition of the corresponding two weighted NNIs
  shown in Figure~\ref{fig:decreaseloops}(a) and a relabeling is by Lemma~\ref{claim:NNI}
  a bijection between $\parte(G_{0,2},H_{0,2})$ and $\parte(G_{0,2},H_{0,1})$,
  where the Eulerian subgraphs~$H_{0,2}$ and~$H_{0,1}$ are
  induced by the edge sets $\{a,c\}$ and $\{c\}$, respectively.

  For the other cases, the proof is by induction on $j$.
  Suppose that $j \geq 2$ and let $H$ be an internally Eulerian subgraph of $G_{h,k}$ 
  with $i$ leaf-paths and $j$ loops. 
  By applying weighted NNIs, 
  we may assume that two of the loops in $H$ are at distance~2.
  Call $a$ one of these loops.
  The composition of the corresponding  four weighted NNIs shown in Figure~\ref{fig:decreaseloops}(b) 
  and a relabeling is by Lemma~\ref{claim:NNI} a bijection between $\parte(G_{h,k},H)$ and $\parte(G_{h,k},H{-}a)$.
  Therefore, $\vol(G_{h,k},H_{i,j}) = \vol(G_{h,k},H_{i,j-1}) = \vol(G_{h,k},H_{i,1}),$
  where the second equality follows from the induction hypothesis.
\end{proof}

\begin{figure}[ht]
  \centering 
  \includegraphics[width=.9\textwidth]{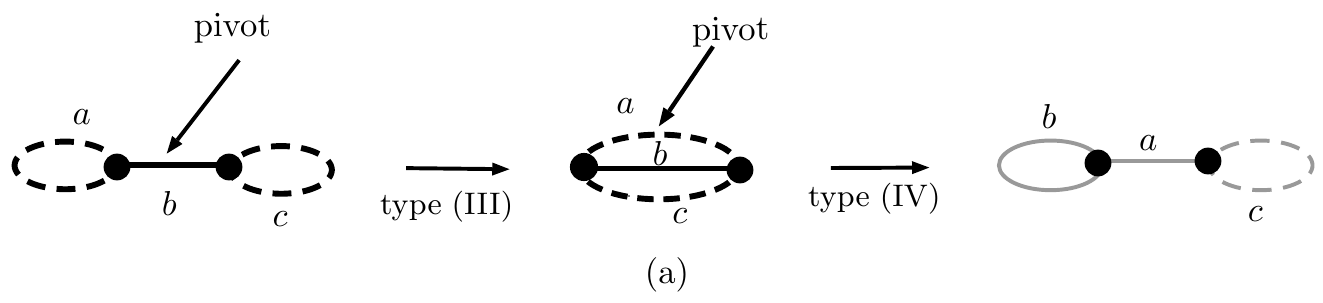}

  \vspace{5mm}

  \includegraphics[width=.9\textwidth]{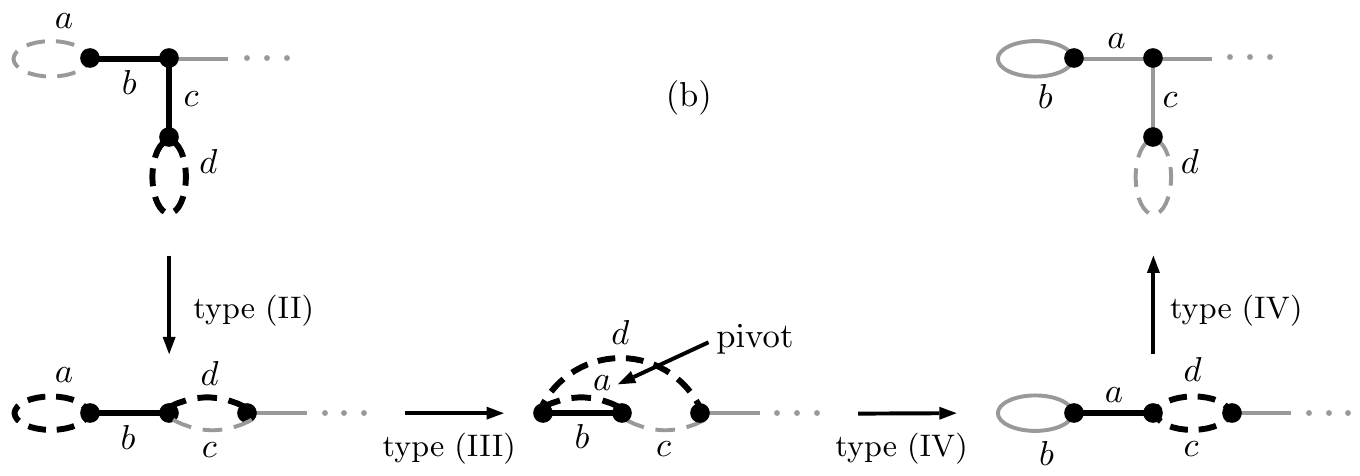}
  \caption{Weighted NNIs that bring two loops with odd weights to one loop with odd weight.
    The trail of each NNI is marked by bold darker line segments.
    Dashed line segments indicate the edges with odd weights.}
  \label{fig:decreaseloops}
\end{figure}

Finally, we take special attention to the value of $\vol(G_{h,k},H_{i,0})$ for $i > 0$.
This will allow us to simplify even further the summation in~\eqref{eq:partition2}.

\begin{lem}\label{lem:somepathzeroloops}
  Let $h$ and $k$ be nonnegative integers such that $k \geq 1$ and $h+k \geq 2$.
  For every nonnegative integer $t$, $i = 1,\ldots,\lfloor h/2 \rfloor$,
  we have that $$\vol(G_{h,k},H_{i,0})= \vol(G_{h,k},H_{i,1}).$$
\end{lem}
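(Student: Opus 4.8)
The plan is to mimic the strategy of Lemma~\ref{lem:onelooptype}, but now trading a leaf-path for a loop rather than collapsing two loops into one. Fix $i \geq 1$, so that $H_{i,0}$ has a leaf-path $P$ and no loop, while $H_{i,1}$ has (another) leaf-path together with one loop. I want to exhibit a composition of weighted NNI moves, together with a relabeling of $G_{h,k}$, that realizes a bijection between $\parte(G_{h,k},H_{i,0})$ and $\parte(G_{h,k},H_{i,1})$; then Lemma~\ref{claim:NNI} applied repeatedly (i.e.\ by induction on the number of moves, exactly as in~\eqref{eq:b} of Lemma~\ref{lem:samepolynomials}) gives the claimed equality of volumes. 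By Lemma~\ref{lem:sametypecosets} we are free to choose the positions of the leaf-paths and loops in $H_{i,0}$ and $H_{i,1}$ conveniently, so we may assume that the leaf-path $P$ we intend to modify uses a stem of $G_{h,k}$ at a root $r$, and that the loop we wish to create sits on a leg adjacent to $r$ (recall $k \geq 1$, so $G_{h,k}$ has a leg carrying a loop near each end of the central path).

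First I would isolate, near the root $r$, a local configuration consisting of the stem-edge $a$ (a leaf-edge of $P$), the adjacent central-path or leg structure, and a nearby leg $\ell$ that in $G_{h,k}$ already carries a loop. The idea is that a short sequence of weighted NNIs with pivots among the edges incident to $r$ can reroute the "odd parity" that currently travels along the leaf-edge $a$ so that it instead becomes trapped on the loop $\ell$, converting the leaf-path $P$ (which contributed $2$ to the number of odd leaf-edges) into a closed odd cycle on $\ell$ — i.e.\ replacing one of the $i$ leaf-paths with one loop. This is the reverse flavor of the moves in Figure~\ref{fig:decreaseloops}: there two odd loops were merged toward a single odd loop by moving parity along a trail; here I push parity from a leaf-edge onto a loop. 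As in Lemmas~\ref{lem:sametypecosets} and~\ref{lem:onelooptype}, each NNI in the sequence is chosen (as in Corollary~10 of~\cite{FernandesPRAR2020}) so that it preserves the parities of the weights on the leaf-edges and loops that are not directly involved, so that the internally Eulerian subgraph determined by the parities of the image point is exactly an $H_{i,1}$; the remaining $i-1$ leaf-paths and the $0$ other loops are untouched.

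The composition of the bijections furnished by Lemma~\ref{claim:NNI} for each move in the sequence is then a bijection $\parte(G_{h,k},H_{i,0}) \to \parte(G_{h,k},H_{i,1})$, after the appropriate relabeling identifying the final caterpillar with $G_{h,k}$ again, and taking cardinalities yields $\vol(G_{h,k},H_{i,0}) = \vol(G_{h,k},H_{i,1})$. Because all the ambient graphs in the chain are $(h,k)$-caterpillars with the same $h$ and $k$ (NNIs preserve the number of nodes, edges, components, and the cyclomatic number, and here also the leaf count), the relabeling step is legitimate. I expect the main obstacle to be the bookkeeping of the local move near the root: one must check that there really is a short trail of length three (a valid NNI pivot configuration) that moves the parity from the stem $a$ onto the loop $\ell$ while keeping all other leaf-edge and loop parities fixed, and that when $i = \lfloor h/2 \rfloor$ with $h$ small or when the central path is short or empty, the required legs and stems are genuinely present — handling these boundary cases (small $h$, empty central path) may require, as in Lemma~\ref{lem:onelooptype}, treating a base configuration separately and then inducting.
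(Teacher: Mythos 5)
Your proposed mechanism---rerouting the odd parity off the stem $a$ of a leaf-path so that it ``becomes trapped'' on a loop, thereby ``replacing one of the $i$ leaf-paths with one loop''---cannot be realized by weighted NNIs, and the identity it would establish is not the one in the lemma. A weighted NNI changes the weight of the pivot only, and the pivot is the \emph{middle} edge of a trail of length three, so both of its endpoints are internal nodes; a leaf-edge can therefore never be a pivot, and since NNIs preserve degrees, leaf-edges also remain leaf-edges (with unchanged weights) throughout any sequence of moves and relabelings. Consequently the parities of the weights on the leaf-edges, and hence the number of leaf-paths of the internally Eulerian subgraph recording the odd coordinates, are invariant under any composition of weighted NNIs: no sequence of moves can send a point of $\parte(G_{h,k},H_{i,0})$ to a point whose odd set has only $i-1$ leaf-paths. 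Moreover, by your own description (``the remaining $i-1$ leaf-paths and the $0$ other loops are untouched'') the target coset would be the one indexed by $H_{i-1,1}$, not $H_{i,1}$, and the resulting identity $\vol(G_{h,k},H_{i,0})=\vol(G_{h,k},H_{i-1,1})$ is false in general: for $i\geq 2$ it would combine with the lemma itself (applied to $i-1$) to give $\vol(G_{h,k},H_{i,0})=\vol(G_{h,k},H_{i-1,0})$, contradicting Lemma~\ref{lem:onelessleafpath}, which says these differ by $\vol(G_{h-1,k},H_{i-1,0})\geq 1$ for even $t\geq 2$; for $i=1$ it would force $L^{\calP}_{h-1,k}(t)$ to equal $0$ or $(\frac t2+1)^{k-1}$, which fails for large even $t$.

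The paper's proof does something genuinely different: it keeps all $i$ leaf-paths intact and \emph{adds} the loop to $H$ rather than trading a leaf-path for it. Using Lemma~\ref{lem:sametypecosets} one may assume some leaf-path of $H$ lies at distance~$1$ from a loop $a$ of $G_{h,k}$; the two weighted NNIs of Figure~\ref{fig:increaseloop} then use the odd weights on that adjacent leaf-path to switch the weight of the loop from even to odd, while the leaf-path itself stays odd (this is possible because the edge playing the role of the loop is an internal edge, unlike the frozen leaf-edges). After a relabeling this gives, via Lemma~\ref{claim:NNI}, a bijection between $\parte(G_{h,k},H)$ and $\parte(G_{h,k},H{+}a)$, i.e.\ between the cosets indexed by $H_{i,0}$ and $H_{i,1}$, which is exactly the claimed equality. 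Your instinct that Figure~\ref{fig:decreaseloops} should have a companion move is right, but the companion is ``use a nearby odd leaf-path to turn a loop odd,'' not ``convert a leaf-path into a loop''; as written, your argument has a genuine gap at its central step.
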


\begin{proof}
  Let $H$ be an internally Eulerian subgraph of $G_{h,k}$ 
  with $i$ leaf-paths and no loops.
  By applying weighted NNIs we may assume that there is a leaf-path of $H$ at distance~1 of a loop of $G_{h,k}$.
  We denote the loop by $a$. 
  The composition of the corresponding bijections of
  the two weighted NNIs ilustrated in Figure~\ref{fig:increaseloop} 
  and a relabeling is a bijection between $\parte(G_{h,k},H)$ and $\parte(G_{h,k},H{+}a)$.
  Therefore, $\vol(G_{h,k},H_{i,0}) = \vol(G_{h,k},H_{i,1}).$
\end{proof}

\begin{figure}[ht]
  \centering
  \includegraphics{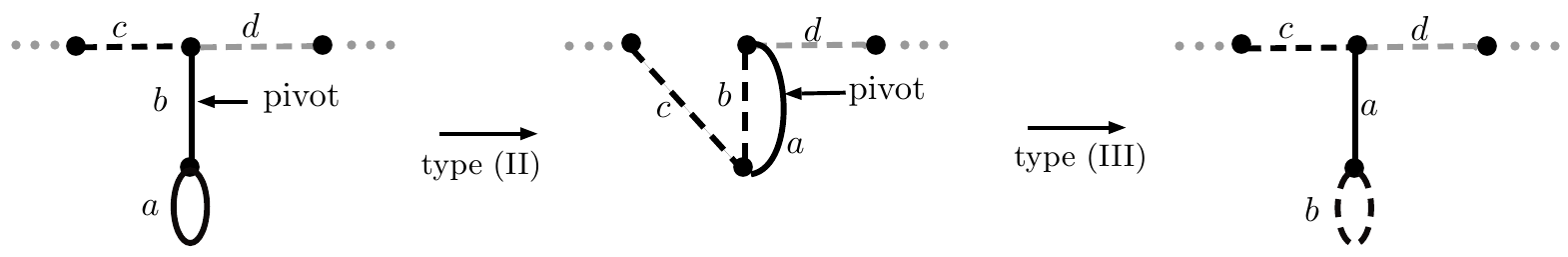}
  \caption{Weighted NNIs that, using a leaf-path with odd weights, change the weight of a loop from even to odd.
    The trail of each NNI is marked by bold darker line segments.
    Dashed line segments indicate the edges with odd weights.}
  \label{fig:increaseloop}
\end{figure}

\subsection{Key summation}
       
Finally, we arrive at the targeted expression of this section by       
manipulating the summation in~\eqref{eq:partition2}.
If $G$ is a $\{1,3\}$-graph with $h$ leaf-edges and cyclomatic number $k$ then
\begin{align}
  L^{\calQ}_G(t) & =  L^{\calQ}_{h,k}(t) \label{eq:waka} \\ 
               & = \sum_{i=0}^{\floor{h/2}}{h \choose 2i}
                      \sum_{j=0}^k{k \choose j}    \vol(G_{h,k},H_{i,j})    
                   \nonumber \\
               & =  \sum_{i=0}^{\floor{h/2}}{h \choose 2i}
                   \left(
                   \vol(G_{h,k},H_{i,0}) +\sum_{j=1}^k{k \choose j} \vol(G_{h,k},H_{i,1})
                   \right) \label{eq:lemma8}
                   \\
               & =  \sum_{i=0}^{\floor{h/2}} {h \choose 2i}
                   \left( 
                   \vol(G_{h,k},H_{i,0}) + (2^k-1) \vol(G_{h,k},H_{i,1})
                   \right) \nonumber \\
               & = \sum_{i=0}^{\floor{h/2}}
                   {h \choose 2i}
                   \left(
                      2^k\,\vol(G_{h,k},H_{i,0})
                   - (2^k-1)(\vol(G_{h,k},H_{i,0})-\vol(G_{h,k},H_{i,1}))
                   \right) \nonumber \\
               & = 2^k\, \sum_{i=0}^{\floor{h/2}} {h \choose 2i} \vol(G_{h,k},H_{i,0}) \nonumber \\
               & \qquad                    
                   - (2^k-1)
                   \sum_{i=0}^{\floor{h/2}} {h \choose 2i} (\vol(G_{h,k},H_{i,0})-\vol(G_{h,k},H_{i,1})), \nonumber\\ 
               & = 2^k\, \sum_{i=0}^{\floor{h/2}} {h \choose 2i} \vol(G_{h,k},H_{i,0}) \label{eq:parcial1} \\
               & \qquad                    
                 - (2^k-1) (\vol(G_{h,k},\Hzz)-\vol(G_{h,k},H_{0,1})), 
               \label{eq:parcial2}
 \end{align}
where~\eqref{eq:waka} is due to Lemma~\ref{lem:samepolynomials}, \eqref{eq:lemma8}
holds by Lemma~\ref{lem:onelooptype}, and \eqref{eq:parcial2} follows from Lemma~\ref{lem:somepathzeroloops}.
We remind the reader that $\vol(G_{h,k}, H_{0,0}) = \vol(G_{h,k}, \Hzz)$.

\section{The period for $\{1,3\}$-graphs}\label{sec:theperiod}

In this section we prove the main theorem of this paper.  We claim that we may henceforth consider only $t$ an even integer, due to the following argument.
Liu and Osserman~\cite[Lemma~3.3]{LiuO2006} proved that if $t$ is a nonnegative odd
integer, then $\vol(G_{h,k},H_{i,j}) = L^{\calP}_{h,k}(t)$ for every $i \geq 0$ and $j \geq 0$.
Applying their result for $t$ odd to the final relation of the previous section we see
that~\eqref{eq:parcial2} vanishes and from the
summation in~\eqref{eq:parcial1} we have that
\begin{align}
  L^{\calQ}_G(t) = L^{\calQ}_{h,k}(t)  & = 2^k\, \sum_{i=0}^{\floor{h/2}} {h \choose 2i} \vol(G_{h,k},H_{i,0}) \nonumber \\
  & = 2^k\, \sum_{i=0}^{\floor{h/2}} {h \choose 2i} L^{\calP}_{h,k}(t)  \nonumber \\
               & = N_{G_{h,k}} L^{\calP}_{h,k}(t) = N_G L^{\calP}_G(t) \label{eq:remark},
\end{align}
where~\eqref{eq:remark} follows from the fact that the number  of internal
Eulerian subgraphs of~$G_{h,k}$ is~$N_{G_{h,k}}= 2^k$ if $h = 0$ and $N_{G_{h,k}}=2^{k+h-1}$ if
$h > 0$.
From the relation $L^{\calQ}_G(t) = N_G L^{\calP}_G(t)$ for odd $t$,
and the fact that $L^{\calQ}_G(t)$ has period~1 or~2, 
Liu and Osserman concluded that the odd constituent polynomials $p_1$ and $p_3$
of $L^{\calP}_G$ are equal.

\subsection{The lemmas of the leaf-paths and of the loops}

For the purpose of determining the period of $L^{\calP}_{G}$, we settle the values of
the summation in~\eqref{eq:parcial1} and the difference in~\eqref{eq:parcial2}
for nonnegative even integers $t$ as stated in the two lemmas beneath.
Lemma~\ref{lem:summationleafs} is concerned with the summation in~\eqref{eq:parcial1} and 
Lemma~\ref{lem:summationloops} deals with the difference in~\eqref{eq:parcial2}.
We recall that for any $(h,k)$-caterpillar $G_{h,k}$  we have  $h+k \geq 2$, by definition.

\begin{lem}[the leaf-paths]\label{lem:summationleafs}
  Let $h$ and $k$ be  integers such that $h \geq 0$ and $k \geq 0$. 
  For every nonnegative even integer $t$,  we have that
  \begin{align}
    2^k\, \sum_{i=0}^{\floor{h/2}} {h \choose 2i} \vol(G_{h,k},H_{i,0})
    & \ = \ N_{G_{h,k}}\,\Big(L^{\calP}_{h,k}(t) + \sum_{j=1}^{\floor{h/2}} (-4)^{-j}\,{h-j \choose j}\frac{h}{h-j}\,L^{\calP}_{h{-}j,k}(t)\Big).  \nonumber
  \end{align}
\end{lem}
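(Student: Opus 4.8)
The plan is to relate the quantities $\vol(G_{h,k},H_{i,0})$ to discrete volumes of the polytopes $\calP_{h-j,k}$ by a careful counting of integer points in the all-even coset, organized according to how many of the $h$ leaf-edges carry a value forced to be ``paired up'' through the caterpillar. First I would recall that, by \eqref{eq:emptycoset} and Lemma~\ref{lem:onelooptype}/Lemma~\ref{lem:somepathzeroloops}, for $i=0$ we already know $\vol(G_{h,k},H_{0,0}) = L^{\calP}_{h,k}(t)$, so the content of the lemma is entirely about the terms with $i\geq 1$, i.e.\ about how a choice of $2i$ ``active'' leaf-edges (those receiving odd weight) contributes. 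The key observation is that, after applying weighted NNIs (Lemma~\ref{claim:NNI}, Lemma~\ref{lem:sametypecosets}) to normalize the position of the active leaf-paths, counting points in $\parte(G_{h,k},H_{i,0})$ amounts to counting integer points of $t\calP_{G_{h,k}}$ whose coordinates on a fixed set of $2i$ leaf-edges are odd and the rest even; and such a count, through the triangle/perimeter constraints along the central path, collapses onto a count in a smaller caterpillar $\calP_{h-i,k}$ (roughly, each leaf-path ``uses up'' one leaf slot when we pass to the half-integral picture). This is where the factor $\binom{h-j}{j}\frac{h}{h-j}$ will enter: it is (up to the sign and the power of $4$) the number of ways to partition/assign leaf-edges into $j$ leaf-paths plus singletons, and the combinatorial identity
\[
  \sum_{i=0}^{\floor{h/2}}\binom{h}{2i} x_i \;=\; \sum_{j=0}^{\floor{h/2}} c_{h,j}\, y_j
\]
that we need is a standard binomial inversion once we express $\vol(G_{h,k},H_{i,0})$ as a linear combination of the $L^{\calP}_{h-j,k}(t)$.

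Concretely, the steps I would carry out are: (1) use Wakabayashi's invariance (already invoked as Lemma~\ref{lem:samepolynomials} and the surrounding discussion) together with the NNI normalization to reduce to computing $\vol(G_{h,k},H_{i,0})$ on the specific caterpillar $G_{h,k}$ with the $i$ leaf-paths in a fixed standard location; (2) set up the bijection between $\parte(G_{h,k},H_{i,0})$ and a set of integer points of $t\calP$ on the smaller graph $G_{h-i,k}$, obtained by ``contracting'' each standard leaf-path to a single leaf and tracking the resulting dilation/parity bookkeeping — this should give an identity of the shape $\vol(G_{h,k},H_{i,0}) = \sum_{j} a_{i,j} L^{\calP}_{h-j,k}(t)$ with explicit rational coefficients $a_{i,j}$; (3) substitute into $2^k\sum_i \binom{h}{2i}\vol(G_{h,k},H_{i,0})$, swap the order of summation, and evaluate $\sum_i \binom{h}{2i} a_{i,j}$ in closed form; (4) check that the result matches $N_{G_{h,k}}\big(L^{\calP}_{h,k}(t) + \sum_{j\geq 1}(-4)^{-j}\binom{h-j}{j}\frac{h}{h-j}L^{\calP}_{h-j,k}(t)\big)$, using $N_{G_{h,k}} = 2^{k+h-1}$ for $h>0$ and $2^k$ for $h=0$ to absorb the $2^k$ and the powers of $2$ coming from the leaf counts.

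The main obstacle I expect is Step~(2): making the reduction from $G_{h,k}$ with $2i$ odd leaf-edges down to a genuine Ehrhart count on $\calP_{h-j,k}$ precise, in particular getting the power $(-4)^{-j}$ and the coefficient $\binom{h-j}{j}\frac{h}{h-j}$ exactly right. The sign and the factor $4^{-j}$ strongly suggest this is not a single clean bijection but rather an inclusion–exclusion over which pairs of leaf-edges are ``glued'' into a common leaf-path, with each leaf-path contributing a factor that, after summing the geometric-type contributions of its internal nodes on $G_{h,k}$, produces the $-\tfrac14$; the combinatorial coefficient $\binom{h-j}{j}\frac{h}{h-j}$ is exactly the number of ways to choose $j$ disjoint edges (a matching of size $j$) in a path/cycle-like structure on $h$ elements, so the bookkeeping must be matched to the cyclic or linear arrangement of the legs of $G_{h,k}$. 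I would handle this by first doing the cases $h=0$ (trivial, both sides equal $N_{G_{0,k}}L^{\calP}_{0,k}(t)=2^k L^{\calP}_{0,k}(t)$) and $h=1,2$ by hand to pin down the coefficients, then prove the general identity by induction on $h$, peeling off one leaf-edge at a time via a weighted NNI and an application of the $h{-}1$ case, with the binomial recurrence $\binom{h-j}{j}\frac{h}{h-j}$ satisfying the Pascal-type relation that the induction requires. Everything else — the summation reorganization in Step~(3) and the final matching in Step~(4) — should be routine binomial-coefficient manipulation, of the same flavor as the ``Key summation'' computation already displayed in Section~\ref{sec:nnis}.
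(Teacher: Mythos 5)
Your high-level architecture matches the paper's: express each $\vol(G_{h,k},H_{i,0})$ as a linear combination of the quantities $L^{\calP}_{h-j,k}(t)$, substitute into $2^k\sum_i\binom{h}{2i}\vol(G_{h,k},H_{i,0})$, swap the order of summation, and finish with a closed-form evaluation of $\sum_{i\geq j}\binom{h}{2i}\binom{i}{j}$ (this is exactly Identity~\ref{clm:binomials}, a known Chebyshev-type identity from Riordan, so your Step~(3)--(4) is fine, if a little glibly called routine). The genuine gap is your Step~(2), which is precisely where the paper does all the work. The correct expansion is not a contraction of $G_{h,k}$ onto a single smaller caterpillar $G_{h-i,k}$, nor a matching-style inclusion--exclusion over glued pairs of legs with each glued pair contributing a factor $-\tfrac14$; it is the alternating-sign formula $\vol(G_{h,k},H_{i,0})=\sum_{j=0}^{i}(-1)^j\binom{i}{j}L^{\calP}_{h-j,k}(t)$ (Lemma~\ref{lem:leafpaths}), which follows by induction on $i$ from the recurrence $\vol(G_{h,k},H_{i,0})=\vol(G_{h,k},H_{i-1,0})-\vol(G_{h-1,k},H_{i-1,0})$ (Lemma~\ref{lem:onelessleafpath}). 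Proving that recurrence is the hard technical step: one fixes a leaf-path consisting of two stems $a,b$ at a node $r$, shifts by $(\um_{\{a,b\}},\um_r)$ to biject the points with $z_r<t$ in $\parte(G_{h,k},H_{i-1,0})$ onto the points with $w_c<w_a+w_b$ in $\parte(G_{h,k},H_{i,0})$, and then counts the two boundary slices ($z_r=t$ versus $w_c=w_a+w_b$) fiberwise over $w_c$ to show their difference equals $|\parte(G_{h-1,k},H_{i-1,0})|$, where $G_{h-1,k}$ is obtained by deleting the two leaves --- and this is also where the hypothesis that $t$ is even is used. Your proposal neither identifies this recurrence nor supplies a mechanism for it; in the paper the signs $(-1)^j$ come from this recurrence, and the factors $(-4)^{-j}$ and $\binom{h-j}{j}\frac{h}{h-j}$ then fall out mechanically from the Riordan identity and from absorbing $2^{h-1}$ into $N_{G_{h,k}}=2^{k+h-1}$, not from any combinatorics of matchings among the legs.

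Your fallback plan for the obstacle --- induction on $h$, ``peeling off one leaf-edge at a time via a weighted NNI'' --- cannot work as stated: an NNI move preserves the number of nodes, edges and the degree sequence, so no sequence of (weighted) NNIs relates $G_{h,k}$ to $G_{h-1,k}$; passing to a caterpillar with fewer leaves requires deleting nodes, which is exactly what the shifting-plus-boundary-count argument of Lemma~\ref{lem:onelessleafpath} is designed to control. NNIs are only used (as in Lemma~\ref{lem:sametypecosets}) to normalize the position of the leaf-paths inside the same graph before that argument is run. So the proposal is not a proof: the central identity it needs is left open, and the suggested route to it fails.
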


\begin{lem}[the loops]\label{lem:summationloops}
  Let $h$ and $k$ be integers such that $h \geq 0$ and $k \geq 1$. 
  For every nonnegative  even integer $t$,  we have that
  \begin{align}
    \vol(G_{h,k},\Hzz) - \vol(G_{h,k},H_{0,1}) & \ = \ \left\{\begin{array}{ll}
                                                 (\frac{t}{2}+1)^{k-1} & \mbox{if $t = 0 \Mod 4$ or $h = 0$}, \\
                                                 0            & \mbox{if $t = 2 \Mod 4$}.
                                               \end{array}\right. \nonumber
  \end{align}
\end{lem}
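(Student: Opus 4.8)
The plan is to compute $\vol(G_{h,k},\Hzz)$ and $\vol(G_{h,k},H_{0,1})$ more or less directly, by understanding the integer points of $t\calQ_{G_{h,k}}$ that lie in the relevant cosets of $\calL_{G_{h,k}} = (2\ZZ^E)\times\ZZ^I$. Recall $\parte(G_{h,k},\Hzz)$ consists of points $(w,z)$ with all $w_e$ even, so $w = 2x$ with $x$ an integer point of $t\calP_{G_{h,k}}$; the value $\vol(G_{h,k},\Hzz) = L^{\calP}_{h,k}(t)$ is already recorded in~\eqref{eq:emptycoset}. The point $H_{0,1}$ is an internally Eulerian subgraph consisting of a single loop; a point of $\parte(G_{h,k},H_{0,1})$ has $w_e$ odd precisely on that one loop edge. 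Since by Lemma~\ref{claim:NNI} and Lemma~\ref{lem:onelooptype} the quantity only depends on $h$, $k$ through the caterpillar structure and on the fact that $H$ has a single loop, I would first use weighted NNIs (as in Lemmas~\ref{lem:sametypecosets}, \ref{lem:onelooptype}) to move the odd loop to whatever position makes the bookkeeping cleanest --- say an extremal loop on $G_{h,k}$ --- and then analyze the linear system of $t\calQ_{G_{h,k}}$ coordinate by coordinate along the central path.

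The key structural observation I would exploit is that, for an $(h,k)$-caterpillar, the triangle-plus-parity constraints at the internal nodes essentially propagate a single ``running weight'' along the central path: at each internal node $v$ with incident edges $a,b,c$ we have $w_a+w_b+w_c = 2z_v$ with $z_v\le t$ and the triangle inequalities, and one of $a,b,c$ is a leg (a leaf-edge or a loop edge) while the other two lie on the central path. I would parametrize a point of $t\calQ_{G_{h,k}}$ by the leg-weights together with the central-path weights and show that, once the parities of all legs are fixed (which is exactly the data of the internally Eulerian subgraph $H$), the number of completions is governed by the number of integer ``height functions'' along the central path subject to the running inequalities $0\le z_v\le t$. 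Concretely, in the all-even case ($H=\Hzz$) each leg weight ranges over even values in $[0,t]$ and the number of valid central-path completions yields $L^{\calP}_{h,k}(t)$ after the factor-of-two rescaling; in the one-odd-loop case one leg weight is now constrained to odd values in $[0,t]$, which shifts the feasible region for that coordinate, and the difference $\vol(G_{h,k},\Hzz) - \vol(G_{h,k},H_{0,1})$ collapses to counting integer points of a lower-dimensional polytope --- I expect a product over the $k-1$ remaining loops of the number of choices $0,1,\dots,t/2$, i.e.\ $(\tfrac{t}{2}+1)^{k-1}$. The $h>0$ versus $h=0$ dichotomy, and the $t\equiv 0$ versus $t\equiv 2\Mod 4$ dichotomy, should enter through whether an adjacent leaf-edge (present iff $h>0$) can ``absorb'' the parity mismatch created by the odd loop: when $t\equiv 2\Mod 4$ and there is a leaf-edge available, a parity/counting symmetry (an explicit involution swapping which of two legs carries the odd weight, combined with $t/2$ being odd) makes the two volumes equal; when $t\equiv 0\Mod 4$ or no leaf-edge is present, that symmetry is obstructed and the residual count is exactly $(\tfrac{t}{2}+1)^{k-1}$.

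So the concrete steps, in order, are: (1) use NNIs to reduce to $G_{h,k}$ with the odd loop (if any) extremal, and if $h>0$ also with a leaf-edge adjacent to it; (2) set up the explicit coordinates/linear system for $t\calQ_{G_{h,k}}$ restricted to each of the two cosets, writing $\vol(G_{h,k},\Hzz)$ and $\vol(G_{h,k},H_{0,1})$ as sums over leg-weight parities of counts of central-path height functions; (3) exhibit an explicit partial bijection between $\parte(G_{h,k},\Hzz)$ and $\parte(G_{h,k},H_{0,1})$ --- pairing a ``mostly even'' point with the ``odd on one loop'' point obtained by adjusting that loop's weight by $\pm 1$ and compensating along the path --- and identify precisely the points left unmatched; (4) count the unmatched points, getting $(\tfrac{t}{2}+1)^{k-1}$ in the $t\equiv 0\Mod 4$ or $h=0$ case and $0$ otherwise, handling the base case $k=1$ (where $(\tfrac{t}{2}+1)^{0}=1$) separately; (5) invoke Lemma~\ref{lem:onelooptype} / Lemma~\ref{lem:sametypecosets} to conclude the value is independent of which loop we chose and thus holds for general $H_{0,1}$.

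The main obstacle I anticipate is step~(3)--(4): making the near-bijection and its defect completely precise. The subtlety is that adding $1$ to the odd loop's weight need not keep the point feasible (it may push some $z_v$ above $t$ or violate a triangle inequality), so the pairing is only a partial matching, and pinning down exactly which all-even points have no partner --- and showing that set is in bijection with $(\ZZ\cap[0,t/2])^{k-1}$ --- requires carefully tracking how the perimeter/height constraints interact along the central path and at the other loops. The $t\equiv 2\Mod 4$ case, where the defect must vanish, relies on an extra symmetry (roughly, that when $h>0$ the presence of a leaf-edge gives a second way to fix parity, and $t/2$ odd makes the two ways cancel), and getting that cancellation argument airtight --- rather than merely plausible --- is where the real work lies; this is presumably why the paper defers it to a dedicated Section~\ref{sec:loops}.
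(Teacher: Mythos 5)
Your outline reproduces the paper's general philosophy --- comparing the two cosets by shifting the weight of the odd loop and measuring the defect of a partial matching --- but it stops exactly where the content of the lemma begins, and the two steps you defer are the whole proof. The paper does not try to pin down the unmatched points of a $\pm 1$ pairing inside $t\calQ_{G_{h,k}}$ directly. Instead, Lemma~\ref{lem:reduction} shifts $(w,z)$ by $\pm(\um_{\ell},\um_r)$ to get bijections between the slices $\{z_r=\alpha\}$ of $\parte(G_{h,k},\Hzz)$ and the slices $\{z_r=\alpha\pm 1\}$ of $\parte(G_{h,k},H_{0,1})$; these telescope so that only the slices with $z_r=t$ survive, and halving the coordinates identifies the surviving difference with $\volev{a}(t\calP_{G_{h+1,k-1}})-\volod{a}(t\calP_{G_{h+1,k-1}})$ for a leaf-edge $a$ of the smaller caterpillar in which the loop has been replaced by a leaf. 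Evaluating that even-minus-odd count is then a second, independent induction (Lemmas~\ref{lem:inductivestep} and~\ref{lem:rootedloopedtrees}): applying Lemma~\ref{lem:OI} at the leaf-edge $a$, the quantity $\Delta_{h,k}(t)$ factors as a product over two subcaterpillars, and the base cases $\Delta_{1,1}(t)=\frac t2+1$ and $\Delta_{2,0}(t)\in\{1,0\}$ (according to $t=0$ or $2 \Mod 4$) are what produce both the value $(\frac t2+1)^{k-1}$ and the vanishing when $t=2\Mod 4$.

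In your plan, steps (3)--(4) --- identifying the unmatched points of the $\pm 1$ pairing, showing they biject with $(\ZZ\cap[0,t/2])^{k-1}$ when $t=0\Mod 4$ or $h=0$, and exhibiting a cancelling involution when $t=2\Mod 4$ and $h>0$ --- are stated as expectations, not constructed, and you acknowledge this yourself. That is a genuine gap, not a routine verification: changing the loop weight by $1$ forces a compensation in $z_r$ whose feasibility depends on constraints that propagate along the central path and interact with the other $k-1$ loops, so the defect of your pairing has no local description; the paper's way around this is precisely the telescoping over $z_r$-slices followed by the multiplicative induction on $h+k$, neither of which has a counterpart in your outline. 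Note also that the $t=2\Mod 4$ vanishing is not, in the paper, obtained from a symmetry in which a leaf-edge ``absorbs'' the parity mismatch on the original cosets; it falls out of the factor $\Delta_{2,0}(t)=0$ in the product formula. Without a substitute for these two mechanisms, your proposal is a plausible strategy statement rather than a proof.
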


The proofs of theses lemmas are technical.
Therefore we shall defer the proof
of Lemma~\ref{lem:summationleafs} to Section~\ref{sec:leafs}
and of Lemma~\ref{lem:summationloops} to Section~\ref{sec:loops}.
In the remainder of this section we refer to these lemmas in order to deduce
the period of the Ehrhart quasi-polynomial in $t$ associated
to the polytope $\calP_G$. 

\subsection{Proof of the theorem of the period for $\{1,3\}$-graphs}

We have now all tools to deliver a theorem on the period collapse of the quasi-polynomial~$L^{\calP}_G(t)$.


\begin{proof}[Proof of Theorem~\ref{thm:main} (the period for $\{1,3\}$-graphs)]
  We denote by $q_0^{h,k}(t)$ and $q_1^{h,k}(t)$ the constituents of
  $L^{\calQ}_{h,k}(t)$ for $t$ even and $t$ odd, respectively.
  We also denote by $p^{h,k}_{j}(t)$ the constituent polynomials of $L^{\calP}_{h,k}(t)$ for $t = j \Mod 4$.
  
  Say $G$ has $h$ leaf-edges and cyclomatic number $k$.
  Note that $h+k \geq 2$.
  By Wakabayashi~\cite[Theorem~A(ii)]{Wakabayashi2013} and Lemma~\ref{lem:samepolynomials},
  we may assume that $G = G_{h,k}$.
  Liu and Osserman~\cite{LiuO2006} proved that $p^{h,k}_1 = p^{h,k}_3$.
  Therefore the period of $L^{\calP}_{h,k}$ is at most 2 if and only if $p^{h,k}_0 = p^{h,k}_2$.
    
  \medskip
  \noindent
  {\bf Case 1.} $k=0$.
 
  Then $G_{h,0}$ is a tree, and the period of $L^{\calP}_{h,0}(t)$ is~2 by
  Corollary~\ref{thm:period2trees}. 
  
  \medskip\noindent
  {\bf Case 2.} $h=0$.

  Then $G_{0,k}$ is a cubic graph, and the summation in~\eqref{eq:parcial1}
  reduces to the term associated with $i=0$. 
  By Lemma~\ref{lem:summationloops}, for every nonnegative even integer~$t$, we have that 
  \begin{align}
    L^{\calQ}_{0,k}(t) = q^{0,k}_0(t)& =  N_{G_{0,k}} L^{\calP}_{0,k}(t) - (2^k-1)\big(\frac{t}{2}+1\big)^{k-1} \nonumber \\
                                     & =  N_{G_{0,k}} L^{\calP}_{0,k}(t) - (N_{G_{0,k}}-1)\big(\frac{t}{2}+1\big)^{k-1}. \nonumber
  \end{align}
  %
  Therefore,
  $q_0^{0,k}(t) = N_{G_{0,k}}\,p_0^{0,k}(t) - (N_{G_{0,k}}-1) (\frac{t}{2}+1)^{k-1}$
  for $t = 0 \Mod{4}$.  Because both sides of the latter equation are
  polynomials in~$t$, the equality holds for every real~$t$.  Similarly,
  $q_0^{0,k}(t) = N_{G_{0,k}}\,p_2^{0,k}(t) - (N_{G_{0,k}}-1) (\frac{t}{2}+1)^{k-1}$
  for every real $t$.  Hence $p_0^{0,k} = p_2^{0,k}$ and the period of
  $L^{\calP}_{0,k}$ is at most~2.  

  \medskip\noindent
  {\bf Case 3.} $h \geq 1$ and $k \geq 1$.

  Now we tackle all other connected $\{1,3\}$-graphs.
  For $t = 0 \Mod 4$, applying  Lemma~\ref{lem:summationleafs}
  and Lemma~\ref{lem:summationloops} to the final relation of the previous section,
  we obtain that
  \begin{align}
     L^{\calQ}_{h,k}(t) = q^{h,k}_0(t) = \ & 
     N_{G_{h,k}}\,\Big(L^{\calP}_{h,k}(t) + \sum_{j=1}^{\floor{h/2}} (-4)^{-j}\,{h-j \choose j}\frac{h}{h-j}\,L^{\calP}_{h{-}j,k}(t)\Big)\nonumber \\
     &    - (2^k-1)\big(\frac{t}2+1\big)^{k-1}. \label{eq:mod0}
  \end{align} 
  Similarly, for $t = 2 \Mod 4$, we can derive that
  \begin{align}
        L^{\calQ}_{h,k}(t) = q^{h,k}_0(t) & =  N_{G_{h,k}}\,\Big(L^{\calP}_{h,k}(t) + \sum_{j=1}^{\floor{h/2}} (-4)^{-j}\,{h-j \choose j}\frac{h}{h-j}\,L^{\calP}_{h{-}j,k}(t)\Big).    
    \label{eq:mod2}
  \end{align} 

  
 Using \eqref{eq:mod0} with $t = 0 \Mod 4$,  and substituting $N_{G_{h,k}} =  2^{h+ k -1}$, we have
  \begin{align}
    \frac{q^{h,k}_0(t)}{N_{G_{h,k}}} & =  p_0^{h,k}(t) + \sum_{j=1}^{\floor{h/2}} (-4)^{-j}\,{h-j \choose j}\frac{h}{h-j}\,p_0^{h{-}j,k}(t) - \frac{1}{2^{h-1}} (1 -\frac1{2^k}   )\,\big(\frac{t}2+1\big)^{k-1}. \label{eq:mod0poly} 
  \end{align} 
  Because both sides of Equality~\eqref{eq:mod0poly} are polynomials in $t$,
  it holds for every~$t$.  Similarly, from Equality~\eqref{eq:mod2}, we deduce that, for every~$t$, 
  \begin{eqnarray}\label{eq:mod2poly}
    \frac{q^{h,k}_0(t)}{N_{G_{h,k}}} & = & p_2^{h,k}(t) + \sum_{j=1}^{\floor{h/2}} (-4)^{-j}\,{h-j \choose j}\frac{h}{h-j}\,p_2^{h{-}j,k}(t).
  \end{eqnarray} 
             
  By subtracting~\eqref{eq:mod0poly} from~\eqref{eq:mod2poly} and rearranging the terms, we deduce that
  \begin{align}\label{eq:dif}
    (p^{h,k}_0-p^{h,k}_2)(t) + \sum_{j=1}^{\floor{h/2}} (-4)^{-j}\,{h-j \choose j} & \frac{h}{h-j}\,(p^{h-j,k}_0-p^{h-j,k}_2)(t) \nonumber \\
    & \ = \ \frac{1}{2^{h-1}} \left(1 - \frac1{2^k}\right)\,\left(\frac{t}2+1\right)^{k-1}.
  \end{align} 
We define
  \begin{equation}\label{eq:defi_d}
    d(\alpha) \ := \ \frac{(-4)^{\alpha}}{\alpha}\,\frac{(p^{\alpha,k}_0 - p^{\alpha,k}_2)(t)}{(1-\frac1{2^k})\,(\frac{t}2+1)^{k-1}},
  \end{equation}
so that we may divided \eqref{eq:dif} by~$\frac{h}{(-4)^h}(1-\frac{1}{2^k})(\frac{t}{2}+1)^{k-1}$ 
and obtain
  \begin{equation}\label{eq:recorrence} 
    \sum_{j=0}^{h-1}{h-j \choose j}\,d(h-j) \ = \ \frac{(-4)^h}{h \, 2^{h-1}} \ = \ \frac{(-1)^h \, 2^{h+1}}{h}. 
  \end{equation}
  The $j=\floor{h/2}+1,\ldots,h-1$ terms are all equal to zero.
  
  From one of the so called simpler Chebyshev inverse relations~\cite[Table~2.3, item~5]{Riordan1968}, we derive that
  \begin{eqnarray*} 
    d(h) & = & \sum_{j=0}^{h-1}\left[{h+j-1 \choose j} - {h+j-1 \choose j-1}\right] \frac{(-1)^j{(-1)^{h-j}2^{h-j+1}}}{h-j} \\
         & = & \sum_{j=0}^{h-1}\frac{h-j}{h+j}{h+j \choose j} \frac{(-1)^h 2^{h-j+1}}{h-j} \\
         & = & (-1)^h\,\sum_{j=0}^{h-1}\frac{2^{h-j+1}}{h+j}\,{h+j \choose j}.
  \end{eqnarray*}
  Thus, we have that $d(h) > 0$ for every positive even $h$ and $d(h) < 0$ for every odd $h$. 
  Finally, substituting in~\eqref{eq:defi_d} with $\alpha=h$, we obtain that
  \begin{equation*}
    p^{h,k}_0(t) - p^{h,k}_2(t) \ = \ \frac{h}{4^h} \left(1-\frac1{2^k}\right)\,\left(\frac{t}{2}+1\right)^{k-1}\,\sum_{j=0}^{h-1}\frac{2^{h-j+1}}{h+j}\,{h+j \choose j} > 0
  \end{equation*}
  and conclude that $p^{h,k}_0(t) > p^{h,k}_2(t)$ for every $t \geq 0$.  
  Consequently, the period of~$L^{\calP}_{h,k}$ is~4.
\end{proof}


\section{Lemma of the leaf-paths}\label{sec:leafs}

The purpose of this section is to prove Lemma~\ref{lem:summationleafs}, the lemma of the leaf-paths.
To achieve this goal, we first establish the necessary tools in two supporting lemmas and a combinatorial identity.
We defer the proofs of these lemmas to the end of this section.
 
Then, in Section \ref{bijections section}, we give a tool that is purely set-theoretic, namely  
Lemma \ref{lem:OI}.
This tool allows us to shift our lattice, in order to measure the difference
between two discrete volumes.    This is also the main tool that is employed in the
proofs of the supporting lemmas of this section, and in the following section.
Finally, we end this section by giving the proofs of the supporting lemmas.

Before continuing we would like to recall that $H_{i,j}$  denotes any internally Eulerian subgraph
with $i$ leaf-paths and $j$ loops and that for any $(h,k)$-caterpillar $G_{h,k}$  we have
$h+k \geq 2$, by definition.

\subsection{Supporting lemmas, and an identity}

Here, we state, without proving, the lemmas and identity used in the proof of the lemma of the leaf-paths.

\begin{lem}[one less leaf-path]\label{lem:onelessleafpath}
  Let $h$ and $k$ be integers such that $h \geq 2$ and $k\geq 1$.
  For every nonnegative even integer $t$ and for every integer $i = 1,\ldots,\floor{h/2}$,
  we have that
  \begin{align}
     \label{eq:onelessleafpath}
     \vol(G_{h,k},H_{i,0}) & \ = \ \vol(G_{h,k},H_{i-1,0}) - \vol(G_{h-1,k},H_{i-1,0}). 
  \end{align}
\end{lem}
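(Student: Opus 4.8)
The plan is to isolate one of the $i$ leaf-paths of the Eulerian subgraph, strip off its two leaf-edges, recognize the remainder as living on an $(h-1,k)$-caterpillar, and account for the defect of this stripping by a one-dimensional parity count. First I would use the NNI machinery (Lemma~\ref{claim:NNI}) together with Lemma~\ref{lem:sametypecosets}, exactly as in the proof of Lemma~\ref{lem:samepolynomials}, to replace $G_{h,k}$ and $H_{i,0}$ by convenient representatives. Since $h\ge 2$, I can take an $(h,k)$-caterpillar in which some end node $r$ of the central path has both of its stems $a,b$ being leaf-edges, with leaf endpoints $u,v$, and let $c$ be the central-path edge at $r$; and I take $H:=P\cup H'$, where $P$ is the length-two leaf-path through $a$, $r$, $b$, and $H'$ is any collection of $i-1$ disjoint leaf-paths avoiding $a$, $b$, $c$ (such an $H'$ exists because $2i\le h$). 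Deleting $u$ and $v$ makes $r$ a leaf and $c$ a leaf-edge, and $G':=G_{h,k}-\{u,v\}$ is an $(h-1,k)$-caterpillar on which $H'$ is an internally Eulerian subgraph with $i-1$ leaf-paths and no loop. Hence, by the NNI machinery again, $\vol(G',H')=\vol(G_{h-1,k},H_{i-1,0})$, while $\vol(G_{h,k},H)=\vol(G_{h,k},H_{i,0})$ and $\vol(G_{h,k},H')=\vol(G_{h,k},H_{i-1,0})$.

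Next I would fibre the two counts over the smaller polytope. The only constraints of $t\calQ_{G_{h,k}}$ involving $w_a$, $w_b$ or $z_r$ are the triangle inequalities, the parity equation, and $z_r\le t$ at the node $r$, so forgetting the coordinates $w_a,w_b,z_r$ sends the integer points of $t\calQ_{G_{h,k}}$ onto those of $t\calQ_{G'}$ and restricts to maps $\parte(G_{h,k},H)\to\parte(G',H')$ and $\parte(G_{h,k},H')\to\parte(G',H')$. Fix $\beta\in\parte(G',H')$ and put $m:=w_c(\beta)$. Then $m$ is even (as $c\notin E(H')$), and $m\le t$, because $c$ is a leaf-edge of $G'$ whose internal endpoint $r'$ carries the inequality $w_c\le w_f+w_g$ together with $w_c+w_f+w_g=2z_{r'}\le 2t$ — here $r'$ is internal since $h+k-1\ge 2$ — which forces $w_c\le z_{r'}\le t$. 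The fibre of $\parte(G_{h,k},H')$ over $\beta$ is exactly the set of integer pairs $(w_a,w_b)$ with $w_a,w_b\ge 0$ even, $|w_a-w_b|\le m$ and $m\le w_a+w_b\le 2t-m$ (with $z_r:=(w_a+w_b+m)/2$ then forced and automatically an integer $\le t$); call its size $N_{\mathrm{even}}(m)$. The fibre of $\parte(G_{h,k},H)$ over $\beta$ is described identically but with $w_a,w_b$ odd; call its size $N_{\mathrm{odd}}(m)$.

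The heart of the argument is then the elementary identity $N_{\mathrm{even}}(m)-N_{\mathrm{odd}}(m)=1$ for every even $m$ with $0\le m\le t$ (recall $t$ is even). I would prove it by slicing on $s:=w_a+w_b$: for each even $s$ with $m\le s\le 2t-m$ the admissible pairs $(p,s-p)$ form a block of $m+1$ consecutive values of $p$, whose counts of even and of odd $p$ differ by $+1$ if $s\equiv m\pmod 4$ and by $-1$ if $s\equiv m+2\pmod 4$; since $t$ and $m$ are even there are an odd number $t-m+1$ of admissible even $s$, and the alternating sum of these $\pm 1$'s, which begins and ends with $+1$, equals $1$. Summing this identity over $\beta\in\parte(G',H')$ gives
\[
 \vol(G_{h,k},H')-\vol(G_{h,k},H)\;=\;|\parte(G',H')|\;=\;\vol(G_{h-1,k},H_{i-1,0}),
\]
which after the substitutions above is \eqref{eq:onelessleafpath}. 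The main obstacle is the parity identity of this last step — it is precisely where the hypothesis that $t$ be even is used (for odd $t$ the difference is $0$, consistent with the odd constituents being governed by $L^{\calP}_{h,k}$ alone) — together with the slightly fussy verification that $t\calQ_{G'}$ is literally $t\calQ_{G_{h,k}}$ with the constraints at $r$ deleted and that the bound $w_c\le t$ holds on the nose; the rest is routine bookkeeping. This per-fibre shift could alternatively be repackaged through the purely set-theoretic shifting tool (Lemma~\ref{lem:OI}), applied to the translation by $(1,1)$ in the $(w_a,w_b)$-plane.
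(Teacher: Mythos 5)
Your proposal is correct and follows essentially the same route as the paper: after using Lemma~\ref{lem:sametypecosets} to place the distinguished leaf-path on two stems $a,b$ at a node $r$, you delete the two leaves and compare the cosets of $G_{h,k}$ for $H_{i,0}$ and $H_{i-1,0}$ against $\parte(G_{h-1,k},H_{i-1,0})$, exactly as in the paper's proof. The only difference is bookkeeping at the end: you compute the fibrewise difference $N_{\mathrm{even}}(m)-N_{\mathrm{odd}}(m)=1$ over each point of $\parte(G_{h-1,k},H_{i-1,0})$, whereas the paper cancels the bulk via the shift by $(\um_{a,b},\um_r)$ (a bijection between the strata $z_r<t$ and $w_c<w_a+w_b$) and then counts the boundary strata $z_r=t$ and $w_c=w_a+w_b$ against the smaller coset --- the same cancellation, organized differently.
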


\noindent
From this lemma, we can deduce the following identity.

\begin{lem}[the size of coset $H_{i,0}$]\label{lem:leafpaths}
  Let $h$ and $k$ be integers such that $h \geq 0$ and $k \geq 1$. 
  For every nonnegative even integer $t$ and for every integer $i=0,\ldots,\floor{h/2}$,
  we have that
\begin{align}
 \vol(G_{h,k},H_{i,0}) & \ = \ \sum_{j=0}^i (-1)^j\,{i \choose j}\, L^{\calP}_{h-j,k}(t).
                             \nonumber
\end{align}
\end{lem}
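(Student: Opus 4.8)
The plan is to argue by induction on $i$, using Lemma~\ref{lem:onelessleafpath} as the driving recurrence and the identity $\vol(G_{h,k},\Hzz)=L^{\calP}_{h,k}(t)$ from~\eqref{eq:emptycoset} as the base case. For $i=0$ the subgraph $H_{0,0}$ is the empty subgraph $\Hzz$, so $\vol(G_{h,k},H_{0,0})=\vol(G_{h,k},\Hzz)=L^{\calP}_{h,k}(t)$, which is precisely the claimed right-hand side since only the $j=0$ summand survives.

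For the inductive step, fix $i\geq 1$ and assume the formula holds for $i-1$ for every admissible number of leaf-edges. Since $i\leq\floor{h/2}$ we have $h\geq 2i\geq 2$, so Lemma~\ref{lem:onelessleafpath} applies and yields
\[
  \vol(G_{h,k},H_{i,0}) \ = \ \vol(G_{h,k},H_{i-1,0}) - \vol(G_{h-1,k},H_{i-1,0}).
\]
Both terms on the right fall under the induction hypothesis: for the first this is immediate, and for the second note $h-1\geq 2i-1\geq 2(i-1)$, so $i-1\leq\floor{(h-1)/2}$. Expanding each term via the hypothesis gives
\[
  \vol(G_{h,k},H_{i,0}) \ = \ \sum_{j=0}^{i-1}(-1)^j{i-1 \choose j}L^{\calP}_{h-j,k}(t) \ - \ \sum_{j=0}^{i-1}(-1)^j{i-1 \choose j}L^{\calP}_{h-1-j,k}(t).
\]
Re-indexing the second sum by $j\mapsto j+1$ turns it into $-\sum_{j=1}^{i}(-1)^{j-1}{i-1 \choose j-1}L^{\calP}_{h-j,k}(t)=\sum_{j=1}^{i}(-1)^{j}{i-1 \choose j-1}L^{\calP}_{h-j,k}(t)$, and combining the two sums coefficient-by-coefficient via Pascal's rule ${i-1 \choose j}+{i-1 \choose j-1}={i \choose j}$ (with the boundary cases $j=0$ and $j=i$ treated separately, where the binomial coefficients ${i-1 \choose j}$, resp.\ ${i-1 \choose j-1}$, vanish) produces $\sum_{j=0}^{i}(-1)^j{i \choose j}L^{\calP}_{h-j,k}(t)$, completing the induction.

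I expect no real obstacle here: the substantive content is entirely in Lemma~\ref{lem:onelessleafpath}, which we take as given, and the only thing requiring attention is the bookkeeping of index ranges so that both Lemma~\ref{lem:onelessleafpath} and the induction hypothesis are legitimately invoked. Equivalently, one may phrase the argument as: the claimed closed form is the unique solution of the first-order recurrence of Lemma~\ref{lem:onelessleafpath} satisfying the initial condition at $i=0$, and the map $L^{\calP}_{h-j,k}\mapsto\vol(G_{h,k},H_{i,0})$ is the standard (involutive) binomial transform, so no separate verification beyond the base case is needed.
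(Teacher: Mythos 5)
Your proof is correct and matches the paper's argument essentially verbatim: induction on $i$ with the base case from~\eqref{eq:emptycoset}, the recurrence of Lemma~\ref{lem:onelessleafpath}, a shift $j\mapsto j+1$ in the second sum, and Pascal's rule to combine the binomial coefficients. Your extra care with the index ranges (verifying $i-1\leq\floor{(h-1)/2}$) is a welcome, if minor, addition to what the paper leaves implicit.
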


We shall derive the lemma of the leaf-paths from Lemma~\ref{lem:leafpaths}
and the following known combinatorial identity.
  \begin{identity}{{\cite[Problem~18, Chapter~6]{Riordan1968}}}\label{clm:binomials}
    For every integer $h \geq 1$ and $j=0,\ldots,\floor{h/2}$, 
    \begin{align}
      \sum_{i=j}^{\floor{h/2}}{h \choose 2i}{i \choose j} \ = \ {h-j \choose j}\frac{h}{h-j}\,2^{h-1-2j}.
      \nonumber
    \end{align}
  \end{identity}

\subsection{Proof of the lemma of the leaf-paths}

We are now prepared to carry out the proof of Lemma~\ref{lem:summationleafs},
upon which the proof of Theorem~\ref{thm:main} of the period of $L^{\calP}_G(t)$ rests.

\vspace{2mm}
\begin{proof}[Proof of Lemma~\ref{lem:summationleafs} (the leaf-paths)]
 The lemma clearly holds for $h=0$. Thus we may assume $h \geq 1$ and we have that
  \begin{align}
    2^k \,  \sum_{i=0}^{\floor{h/2}} {h \choose 2i} & \vol(G_{h,k},H_{i,0}) 
                        =  2^k\,\sum_{i=0}^{\floor{h/2}}
                         {h \choose 2i} \sum_{j=0}^i (-1)^j\,{i \choose j}\, L^{\calP}_{h{-}j,k}(t) \label{eq:newlemma} \\ 
            & =  2^k\,\sum_{i=0}^{\floor{h/2}} {h \choose 2i}\left(L^{\calP}_{h,k}(t)
              +\sum_{j=1}^i (-1)^j\,{i \choose j}\,L^{\calP}_{h{-}j,k}(t) \right) \nonumber \\ 
                      & =  2^{k+h-1}\,L^{\calP}_{h,k}(t)
                        + 2^k\,\sum_{i=1}^{\floor{h/2}} {h \choose 2i}
              \sum_{j=1}^i (-1)^j\,{i \choose j}\,L^{\calP}_{h{-}j,k}(t) \nonumber \\
                      & =  N_{G_{h,k}}\,L^{\calP}_{h,k}(t)
                        + 2^k\,\sum_{j=1}^{\floor{h/2}} (-1)^j\,L^{\calP}_{h{-}j,k}(t)
                        \sum_{i=j}^{\floor{h/2}}{h \choose 2i}{i \choose j} \label{eq:exchangesums} \\
                      & = N_{G_{h,k}}\,L^{\calP}_{h,k}(t)
                        + 2^k\,\sum_{j=1}^{\floor{h/2}}
                        (-1)^j\,L^{\calP}_{h{-}j,k}(t)
                        {h-j \choose j}\frac{h}{h-j}   
                        \,2^{h-1-2j} \label{eq:riordan} \\
            & = N_{G_{h,k}}\,L^{\calP}_{h,k}(t)
              + 2^{k+h-1}\,\sum_{j=1}^{\floor{h/2}}
              (-1)^j \,2^{-2j}
              {h-j \choose j}\frac{h}{h-j}   
              \,L^{\calP}_{h{-}j,k}(t) \nonumber\\
            & = N_{G_{h,k}}\,L^{\calP}_{h,k}(t)
              + N_{G_{h,k}}\,\sum_{j=1}^{\floor{h/2}}
              (-4)^{-j}
              {h-j \choose j}\frac{h}{h-j}   
              \,L^{\calP}_{h{-}j,k}(t) \nonumber\\
            & = N_{G_{h,k}}\, \Big(L^{\calP}_{h,k}(t)
              + \sum_{j=1}^{\floor{h/2}}
              (-4)^{-j}
              {h-j \choose j}\frac{h}{h-j}   
              \,L^{\calP}_{h{-}j,k}(t)\Big), \nonumber
\end{align} 
where~\eqref{eq:newlemma} holds by Lemma~\ref{lem:leafpaths},
\eqref{eq:exchangesums} is the result of exchanging the two summations, and
finally~\eqref{eq:riordan} is due to~Identity~\ref{clm:binomials}.
\end{proof}

\subsection{Bijections by shifting}\label{bijections section}

Here we derive a purely set-theoretic tool, Lemma \ref{lem:OI},
that gives a partial bijection \textit{by shifting}.    
We will employ this tool repeatedly in the
remaining proofs of this section, and in those of the next section.

We begin with some notation that is followed by the lemma. 
For an $e$ in $E$ and a set $\calX \subset \IR^E$, let
\begin{eqnarray*}
  \Out_e(\calX) & := & \{w \in \calX\cap \ZZ^E: w_e \mbox{ is even and } w+\um_e \not\in \calX\} \mbox{\ and} \\
  \In_e(\calX)  & := & \{w \in \calX\cap \ZZ^E: w_e \mbox{ is odd and } w-\um_e \not\in \calX\}.
\end{eqnarray*}
We think of the function~$w \mapsto w+\um_e$ as a shifting of the integer points with
even $e$-coordinate to the integer points with odd $e$-coordinate.
$\Out_e(\calX)$ is the set of integer points of $\calX$ with even $e$-coordinate
that are  moved outside $\calX$ and~$\In_e(\calX)$ is the set of integer points with even
$e$-coordinate that are moved inside $\calX$ (Figure~\ref{fig:latticeshift}).

\begin{figure}[htbp]
  \centering\includegraphics[width=0.95\linewidth]{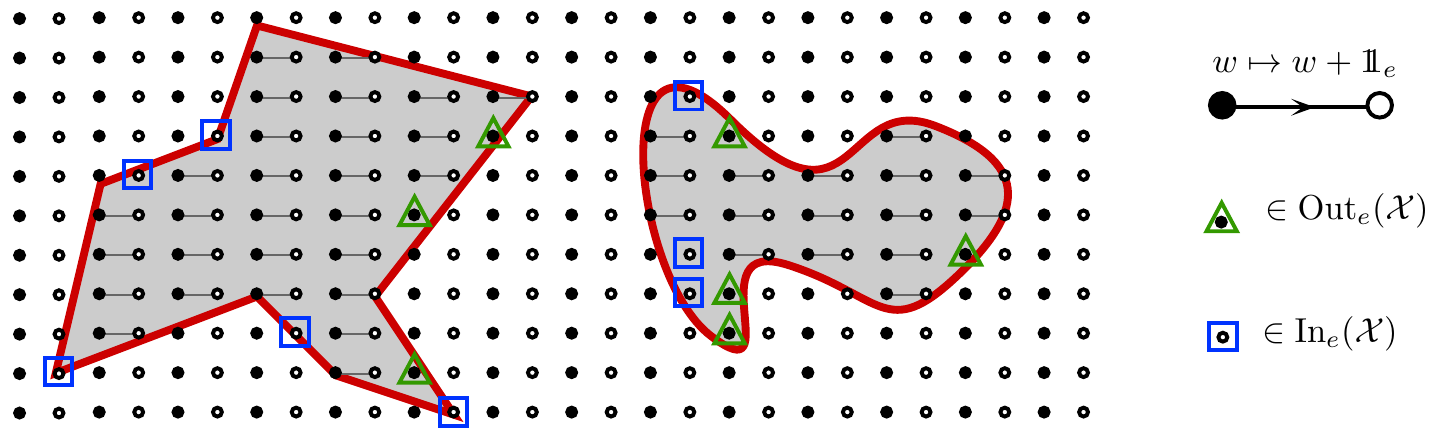}
\caption{Filled circles represent points with an even $e$-coordinate, 
  while empty circles represent points with an odd $e$-coordinate.  
  Let $\calX$ be the set of points in the grey area.  The circles marked by triangles 
  represent the set $\Out_e(\calX)$ and the circles marked by squares represent $\In_e(\calX)$.  
  The difference between the number of filled and empty circles in~$\calX$ is 
  ${|\Out_e(\calX)| - |\In_e(\calX)| = 7 - 8 = -1}$.}
\label{fig:latticeshift}
\end{figure}

\newpage

\noindent
For any subset $\calX \subset \IR^{E}$, we define
\[
 \volev{e}(\calX) := |  \{  w \in \calX \cap \ZZ^E :  w_e  \text{ is even}  \}  |, 
 \]
and 
\[
\volod{e}(\calX) := |  \{  w \in \calX \cap \ZZ^E :  w_e  \text{ is odd}  \}  |.
 \]

\begin{lem}[the bijection by shifting]\label{lem:OI}
  If $\calX \subset \IR^E$ is bounded and $e \in E$, then 
  $$\volev{e}(\calX) - \volod{e}(\calX) \ = \ |\Out_e(\calX)| - |\In_e(\calX)|.$$
\end{lem}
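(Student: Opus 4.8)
The plan is to prove the identity by showing that both sides count the same quantity: the ``net flux'' of the shift map $\sigma_e: w \mapsto w + \um_e$ across the boundary of $\calX$, organized according to the $e$-coordinate parity. I would first set up the shift map $\sigma_e$ as a bijection from the integer points of $\IR^E$ with even $e$-coordinate onto those with odd $e$-coordinate; it is clearly injective with inverse $w \mapsto w - \um_e$. Restricting attention to points of $\calX \cap \ZZ^E$, I would partition the even-$e$-coordinate points into those mapped \emph{inside} $\calX$ (i.e.\ $w$ with $w_e$ even and $w + \um_e \in \calX$) and those mapped \emph{outside} (which is exactly $\Out_e(\calX)$), and symmetrically partition the odd-$e$-coordinate points into those whose preimage lies inside $\calX$ versus those whose preimage lies outside (the latter being $\In_e(\calX)$).

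The key observation is that $\sigma_e$ restricts to a bijection between $\{w \in \calX \cap \ZZ^E : w_e \text{ even},\ w + \um_e \in \calX\}$ and $\{w \in \calX \cap \ZZ^E : w_e \text{ odd},\ w - \um_e \in \calX\}$. Writing $A$ for the size of this common set, we get
\begin{align*}
  \volev{e}(\calX) &= A + |\Out_e(\calX)|, \\
  \volod{e}(\calX) &= A + |\In_e(\calX)|,
\end{align*}
and subtracting gives the claimed equality. So the entire proof reduces to verifying these two counting identities, which are immediate from the definitions of $\Out_e$, $\In_e$, and the observation above.

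The only point requiring a small argument is that these partitions are genuine partitions and that the restricted map is well-defined and bijective, which uses that $\sigma_e$ is a bijection on $\ZZ^E$ and that $w + \um_e \in \calX$ is equivalent (after relabeling via $\sigma_e^{-1}$) to the preimage condition. Boundedness of $\calX$ is used only to guarantee all the sets involved are finite, so that the cardinality arithmetic is valid. I do not anticipate any real obstacle here; the statement is essentially a bookkeeping lemma, and the ``hard part,'' such as it is, is simply to state the bijection between the two interior sets cleanly and note that its existence forces the cardinalities on both sides of the displayed equation to differ by exactly $|\Out_e(\calX)| - |\In_e(\calX)|$.
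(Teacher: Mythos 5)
Your proposal is correct and follows essentially the same argument as the paper: the sets you call ``mapped inside'' are exactly $\{w \in \calX \cap \ZZ^E : w_e \text{ even}\} \setminus \Out_e(\calX)$ and $\{w \in \calX \cap \ZZ^E : w_e \text{ odd}\} \setminus \In_e(\calX)$, the shift $w \mapsto w + \um_e$ gives the same bijection between them, and boundedness is used only for finiteness, just as in the paper.
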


\begin{proof}
  We have that
  \[\Out_e(\calX) \subseteq \{w \in \calX \cap \ZZ^E : w_e \mbox{ is even}\} \mbox{ and }
  \In_e(\calX) \subseteq  \{w \in \calX \cap \ZZ^E : w_e \mbox{ is odd}\}.
  \]
  Therefore,
  \begin{eqnarray}
    \volev{e}(\calX)    & = & |\{w \in \calX \cap \ZZ^E : w_e \mbox{ is even}\} \setminus \Out_e(\calX)| + |\Out_e(\calX)|, \mbox{ and} \label{um} \\    
    \, \volod{e}(\calX) & = & |\{w \in \calX \cap \ZZ^E : w_e \mbox{ is odd}\} \setminus \In_e(\calX)| + |\In_e(\calX)|. \label{dois}
  \end{eqnarray}
  The function ${w \mapsto w+\um_e}$ is a bijection between 
  $\{w \in \calX \cap \ZZ^E: w_e \mbox{ is even}\} \setminus \Out_e(\calX)$ 
  and $\{w \in \calX \cap \ZZ^E : w_e \mbox{ is odd}\} \setminus \In_e(\calX)$. 
  As $\calX$ is bounded, $\volev{e}(\calX)$ and $\volod{e}(\calX)$
  are finite and the result follows from \eqref{um} and \eqref{dois}.
\end{proof}

\subsection{Proofs of the supporting lemmas}

We begin this section by presenting the proof of the supporting Lemma~\ref{lem:onelessleafpath} of one less leaf-path.

\vspace{2mm}
\begin{proof}[Proof of Lemma~\ref{lem:onelessleafpath} (one less leaf-path)]
  By Lemma~\ref{lem:sametypecosets} we may assume that one of the leaf-paths $P$ in $H_{i, 0}$
  consists of two stems $a$ and $b$ of $G_{h,k}$ incident to a vertex~$r$.
  Let $u$ and $v$ be the leaves adjacent to~$a$ and~$b$, respectively (Figure~\ref{fig:inductionP}(a)).
  Let $G_{h-1,k} := G_{h,k}-\{u, v\}$ and let $c$ be the edge incident to $r$
  other than~$a$ or~$b$ (Figure~\ref{fig:inductionP}(b)). 
  Hence,  $H_{i-1, 0} := H_{i,0} - P$ is an internally Eulerian subgraph of
  $G_{h,k}$ and $G_{h-1,k}$.

\begin{figure}[hbt]
   \centering\scalebox{0.9}{\includegraphics{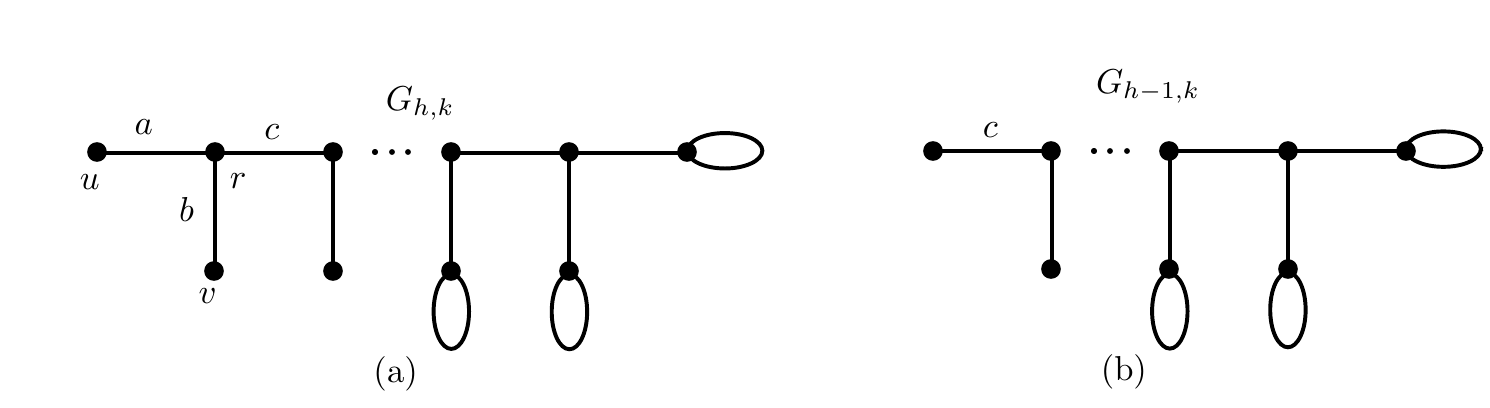}}
  \caption{(a) $G_{h,k}$ and the stems $a$ and $b$. 
    (b) $G_{h-1,k}$.}
  \label{fig:inductionP}
\end{figure}

For conciseness, let $W:=\parte(G_{h,k},H_{i, 0})$, $Z:=\parte(G_{h,k},H_{i-1, 0})$, and
$X :=  \parte(G_{h-1,k}, H_{i-1,0})$. Equality~\eqref{eq:onelessleafpath}
  is equivalent to $|X| = |Z| - |W|$.
  We partition $Z$ and $W$ into
  \begin{align}
    Z_< & := \{(w,z) \in Z: z_r < t\}, \nonumber \\
    Z_= & := \{(w,z) \in Z: z_r = t\},  \nonumber \\
    W_< & := \{(w,z) \in W: w_c < w_a+w_b\} \mbox{ and} \nonumber \\
    W_= & := \{(w,z) \in W: w_c = w_a+w_b\}. \nonumber
  \end{align}

Let $I$ be the set of internal nodes of~$G_{h,k}$, and consider 
the function ${\phi_{+} : \IR^E \times \IR^I \rightarrow \IR^E \times \IR^I}$, given  by the shifting operation
\begin{align}
   \phi_{+}(w,z) & = (w,z) + (\um_{a,b},\um_r). \nonumber
\end{align}
One can verify that $\phi_{+}$ is a bijection between $Z_<$ and $W_<$ and therefore $|Z_<| = |W_<|$.
Consequently, in order to prove the lemma, it suffices to show that
\[|X| = |Z|-|W| = (|Z_<|+|Z_=|) - (|W_<| + |W_=|) = |Z_=|-|W_=|.\]
Conceptually, from the viewpoint of shiftings and Lemma~\ref{lem:OI},
$Z_=$ is reminiscent of $\Out_{ab}(Z)$ and $W_=$ 
is reminiscent of $\In_{ab}(W)$.

For $j=0,1,\ldots,t/2$, let
    \begin{align}
    X_j  & := \{(w,z) \in X : w_c = 2j\} \nonumber
  \end{align}
  and, for $j=0,1,\ldots,t/2$ and $\ell = 0,1,\ldots,j$, let
\begin{align}  
  Z_{j,\ell}  & := \{(w,z) \in Z_= : w_a = t-2\ell,\ w_b = t - 2j + 2\ell,\ w_c = 2j\}. \nonumber
\end{align}
The edge $c$ is not in $H_{i-1,0}$, thus the sets $X_0,\ldots,X_{t/2}$ form a partition of $X$.
The sets $Z_{0,0}, Z_{1,0},\ldots,Z_{t/2,t/2}$
form a partition of $Z_=$.
Moreover, for each $j$, we have that ${|Z_{j, \ell}| = |X_j|}$, for every $\ell$. Therefore,
\[
  |Z_=| \ = \ \sum_{j=0}^{t/2} \sum_{\ell=0}^j |Z_{j,\ell}| \ = \ \sum_{j=0}^{t/2} \sum_{\ell=0}^j |X_j| \ = \ \sum_{j=0}^{t/2} (j+1)|X_j|.
\]  

For $j=0,1,\ldots,t/2$ and $\ell = 1,\ldots,j$, we define
\begin{align}  
    W_{j,\ell}  & := \{(w,z) \in W_= : w_a = 2\ell-1,\ w_b = 2j-2\ell+1,\  w_c = 2j\}. \nonumber
\end{align}
The sets $W_{0,1},\ldots,W_{t/2,t/2}$ form a partition of $W_=$ and 
$|W_{j, \ell}| = |X_j|$ for each~$j$ and each~$\ell$. Hence,
\[
  |W_=| \ = \ \sum_{j=0}^{t/2} \sum_{\ell=1}^j |W_{j,\ell}| \ = \ \sum_{j=0}^{t/2} \sum_{\ell=1}^j |X_j| \ = \ \sum_{j=0}^{t/2} j|X_j|.
\]  
Therefore, $|Z_=|-|W_=| = \sum_{j=0}^{t/2} |X_j| = |X|$.
\end{proof}

Finishing this section we handle the proof of the supporting Lemma~\ref{lem:leafpaths}
of the size of coset $H_{i,0}$.

\vspace{2mm}
\begin{proof}[Proof of Lemma~\ref{lem:leafpaths} (the size of coset $H_{i,0}$)]
  The proof is by induction on $i$, the number of leaf-paths in $H_{i,0}$.
  For $i = 0$, the lemma follows from~\eqref{eq:emptycoset}.
  Hence, we may assume that~$i \geq 1$ and thus $h \geq 2$.
  We then have that
  \begin{align}
  \vol(&G_{h,k},H_{i,0})  =  \vol(G_{h,k},H_{i-1,0}) - \vol(G_{h-1,k},H_{i-1,0}) \label{eq:001} \\
       & =  \sum_{j=0}^{i-1} (-1)^j\,{i-1 \choose j}\, L^{\calP}_{h-j,k}(t)  
         - \sum_{j=0}^{i-1} (-1)^j\,{i-1 \choose j}\, L^{\calP}_{h-1-j,k}(t) 
         \quad \quad \label{eq:002} \\
       & =  \sum_{j=0}^{i-1} (-1)^j\,{i-1 \choose j}\,  L^{\calP}_{h-j,k}(t) 
         - \sum_{j=1}^i (-1)^{j-1}\,{i-1 \choose j-1}\,  L^{\calP}_{h-j,k}(t) 
      \quad \label{eq:003} \\
       & =  \sum_{j=0}^{i-1} (-1)^j\,{i-1 \choose j}\, L^{\calP}_{h-j,k}(t) 
         + \sum_{j=1}^i (-1)^j\,{i-1 \choose j-1}\, L^{\calP}_{h-j,k}(t) \nonumber 
    \\
       & =  \sum_{j=0}^i (-1)^j\,{i \choose j}\, L^{\calP}_{h-j,k}(t), \nonumber 
  \end{align}
  where~\eqref{eq:001} holds by Lemma~\ref{lem:onelessleafpath}, \eqref{eq:002} 
  follows by induction, and~\eqref{eq:003} holds by changing $j$ to~$j+1$ in the second sum.
\end{proof}



\section{Lemma of the loops}\label{sec:loops}

This section is dedicated to establish Lemma~\ref{lem:summationloops} of the loops. 
As always, $H_{i,j}$  denotes any internally Eulerian subgraph of $G_{h,k}$ 
with $i$ leaf-paths and $j$ loops.  To handle the boundary conditions on the indices, we recall that  $H_{0,0} = \Hzz$.  Moreover, for any $(h,k)$-caterpillar~$G_{h,k}$  we have  $h+k \geq 2$, by definition.

\subsection{Supporting lemmas}

The proof of Lemma~\ref{lem:summationloops} of the loops relies on two supporting lemmas.
For conciseness, we recall that if~$\calX$ is a subset of $\IR^{E}$ we write
$\volev{a}(\calX)$ to denote the number of integer points~$w$ in~$\calX$ such that~$w_a$ is even and
we write $\volod{a}(\calX)$ to denote the number of integer points~$w$ in~$\calX$ such that~$w_a$ is odd.

\begin{lem}[$\calQ$ to $\calP$]\label{lem:reduction}
   Let $h$ and $k$ be integers such that  $h \geq 0$ and $k \geq 1$ and
   let~$a$ be a leaf-edge in $G_{h+1,k-1}$.       
   For every nonnegative even integer~$t$, we have that
   \[
     \vol(G_{h,k},\Hzz) - \vol(G_{h,k},H_{0,1})
     \ = \  \volev{a}(t\calP_{G_{h+1,k-1}}) - \volod{a}(t\calP_{G_{h+1,k-1}}) .
   \]
\end{lem}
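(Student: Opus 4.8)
The statement relates a difference of lattice-coset counts in the $\calQ$-polytope of $G_{h,k}$ to a difference of parity-restricted lattice-point counts in the $\calP$-polytope of $G_{h+1,k-1}$. The plan is to exhibit both sides as the signed count $|\Out_a(\calX)| - |\In_a(\calX)|$ of Lemma~\ref{lem:OI}, applied to the same (or naturally identified) bounded set $\calX$, so that the equality is forced by that lemma. The key structural move is the standard one for this family: turning one loop of $G_{h,k}$ into a leaf-edge of $G_{h+1,k-1}$ (equivalently, cutting the loop open). If $\ell$ is a loop of $G_{h,k}$ attached at an internal node, then $G_{h+1,k-1}$ is the $(h{+}1,k{-}1)$-caterpillar obtained by replacing $\ell$ by a pendant leaf-edge $a$ together with a fresh leaf node; this changes the cyclomatic number by $-1$ and the number of leaf-edges by $+1$, which is consistent with the indices in the statement. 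By Lemma~\ref{lem:sametypecosets} and Lemma~\ref{lem:onelooptype} (when $k\ge 2$) it is enough to prove the identity for one convenient placement of the loop, e.g.\ an extremal loop, so I may choose $a$ to be the corresponding extremal leaf-edge of $G_{h+1,k-1}$.

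First I would unwind the left-hand side. Both $\parte(G_{h,k},\Hzz)$ and $\parte(G_{h,k},H_{0,1})$ live inside the integer points of $t\calQ_{G_{h,k}}$; the difference between them is that the $\ell$-coordinate $w_\ell$ is even in the first and odd in the second (all other odd-parity data being absent, since $H_{0,1}$ has no leaf-paths). Projecting away the auxiliary variable $z$ (which is determined by $w$ via the parity constraints~\eqref{eq:parity}), the two coset-counts become exactly $\volev{\ell}(\calY)$ and $\volod{\ell}(\calY)$ for $\calY$ the image in $\IR^{E(G_{h,k})}$ of $t\calQ_{G_{h,k}}$ — i.e.\ the solution set of the triangle systems $S^\triangle(v)$ together with the perimeter-type bounds coming from $z_v \le t$, namely $w_a+w_b+w_c \le 2t$ at each internal node. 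So the left-hand side equals $\volev{\ell}(\calY) - \volod{\ell}(\calY)$, which by Lemma~\ref{lem:OI} is $|\Out_\ell(\calY)| - |\In_\ell(\calY)|$. Next I would unwind the right-hand side similarly: it is literally $|\Out_a(t\calP_{G_{h+1,k-1}})| - |\In_a(t\calP_{G_{h+1,k-1}})|$ by Lemma~\ref{lem:OI}.

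The heart of the proof is then to produce a bijection between $\Out_\ell(\calY)$ and $\Out_a(t\calP_{G_{h+1,k-1}})$, and between $\In_\ell(\calY)$ and $\In_a(t\calP_{G_{h+1,k-1}})$. The natural map sends a weight vector on $E(G_{h,k})$ to its restriction to the common edges (all edges except $\ell$) together with the value $w_a := w_\ell$ on the new leaf-edge; the triangle inequality at the loop node of $G_{h,k}$, where the loop $\ell$ contributes $w_\ell$ to two of the three slots, degenerates exactly into the triangle inequality at the corresponding node of $G_{h+1,k-1}$ with the single slot $w_a$. The perimeter constraint at that node is where I expect the main obstacle: in $\calY$ (the $\calQ$-side, dilation $t$) it reads $w_\ell + w_\ell + w_c \le 2t$, while on the $\calP$-side (dilation $t$) it reads $w_a + w_c \le t$ — these differ by a factor that does not match termwise. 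The resolution is that the "$\Out$" condition is not about lying in the polytope but about whether shifting by $\um$ leaves it: $w \in \Out_\ell(\calY)$ means $w_\ell$ even, $w \in \calY$, and $w + \um_\ell \notin \calY$, i.e.\ some inequality tight at $w$ is violated at $w+\um_\ell$. I would check that the only inequalities whose tightness is affected by bumping $w_\ell$ are the triangle inequalities at the loop node (where $\ell$ appears with multiplicity two) and the bound $2w_\ell + w_c \le 2t$; and that under the identification $w_a = w_\ell$ these correspond precisely to the triangle inequalities and bound $w_a + w_c \le t$ at the node of $G_{h+1,k-1}$ governing the boundary behavior — because incrementing $w_\ell$ increments the loop's contribution by $2$ on the $\calQ$-side and incrementing $w_a$ increments its contribution by $1$ on the $\calP$-side, while the right-hand sides $2t$ and $t$ scale in the same ratio. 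Thus "$w+\um_\ell$ escapes $\calY$" is equivalent to "$w'+\um_a$ escapes $t\calP_{G_{h+1,k-1}}$", and the bijection on $\Out$ (and symmetrically on $\In$) follows. The case splits $t \equiv 0$ versus $t \equiv 2 \pmod 4$ and $h = 0$ versus $h > 0$ are not needed here — this lemma is stated uniformly — so I would keep the argument parity-agnostic and leave those splits for the later evaluation of the signed count in Lemma~\ref{lem:summationloops} itself.
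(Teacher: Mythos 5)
There is a genuine gap, and it occurs at your very first reduction. You replace the two coset counts by $\volev{\ell}(\mathcal{Y})-\volod{\ell}(\mathcal{Y})$, where $\mathcal{Y}$ is the projection of $t\calQ_{G_{h,k}}$ to the $w$-space. Projecting away $z$ loses two essential conditions: an integer point of $\mathcal{Y}$ lifts to an integer point of $t\calQ_{G_{h,k}}$ only if $w_a+w_b+w_c$ is even at every internal node, and membership in $\parteu_t(G_{h,k},\Hzz)$ or $\parteu_t(G_{h,k},H_{0,1})$ forces \emph{every} coordinate other than $w_\ell$ to be even, not merely a parity condition on $w_\ell$. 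Integer points of $\mathcal{Y}$ lying in other cosets (another loop odd, a leaf-path odd, or not liftable at all) are counted by your signed sum and do not cancel. Concretely, take the dumbbell $G_{0,2}$ of Figure~\ref{fig:involution} with $t=2$ (so $h=0$, $k=2$, $G_{h+1,k-1}=G_{1,1}$): by Example~\ref{example} the left-hand side is $4-2=2$, but $\mathcal{Y}$ has $11$ integer points, $6$ with $w_\ell$ even and $5$ with $w_\ell$ odd, so your signed count is $1\neq 2$. The same example refutes the second half: $|\Out_\ell(\mathcal{Y})|-|\In_\ell(\mathcal{Y})|=3-2=1$ while $|\Out_a(2\calP_{G_{1,1}})|-|\In_a(2\calP_{G_{1,1}})|=2-0=2$, so the bijections you propose cannot exist. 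There are also structural slips: replacing the loop by a pendant edge to a fresh leaf leaves the old loop node with degree $2$, so the resulting graph is not the caterpillar $G_{h+1,k-1}$ (which has one \emph{fewer} edge than $G_{h,k}$, whereas your map keeps all non-loop coordinates and adds one, landing in the wrong dimension), and the unrescaled identification $w_a:=w_\ell$ cannot reconcile the perimeter bounds $2t$ versus $t$.

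The paper's proof deliberately stays in the $(w,z)$-space so as not to lose the coset and parity information. It places the loop $\ell$ and a stem $a$ at a common node $r$ (via Lemma~\ref{lem:sametypecosets}), slices $\parte(G_{h,k},\Hzz)$ and $\parte(G_{h,k},H_{0,1})$ by the value of $z_r$, and uses the shifts $(w,z)\mapsto(w,z)\pm(\um_\ell,\um_r)$ --- which, unlike a shift of $w_\ell$ alone, preserve the parity constraints and hence the cosets --- to cancel slices in pairs; for even $t$ the telescoping leaves only the top slices $z_r=t$. On those slices $w_a+2w_\ell=2z_r=2t$ forces $w_a\equiv 0$ or $2 \Mod 4$ according to the coset, and all coordinates other than $w_\ell$ are even, so deleting the loop (the stem $a$ becomes the leaf-edge of $G_{h+1,k-1}$) and \emph{halving} every remaining coordinate gives a bijection onto the integer points of $t\calP_{G_{h+1,k-1}}$ with $w'_a$ even, respectively odd. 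That halving --- available precisely because one has restricted to these two cosets --- is what resolves the $2t$-versus-$t$ mismatch, not an analysis of which inequalities become tight under shifting. Your instinct to use an $\Out$/$\In$ shifting argument is the right spirit, but it has to be carried out on the pair $(\um_\ell,\um_r)$ inside $t\calQ_{G_{h,k}}$, within the two cosets, rather than on the $\ell$-coordinate of the projected polytope.
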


\begin{lem}[the evaluation]\label{lem:rootedloopedtrees}
   Let $h$ and $k$ be integers such that  $h \geq 1$ and $k \geq 0$ and
   let~$a$ be a leaf-edge in $G_{h,k}$.
   For every nonnegative even integer $t$,  we have that
   \[
     \volev{a}(t\calP_{G_{h,k}}) - \volod{a}(t\calP_{G_{h,k}})
     \ = \ \left\{\begin{array}{ll}
                           (\frac{t}{2}+1)^{k} & \mbox{if $t = 0 \Mod 4$  or $h = 1$}, \\
                           0            & \mbox{if $t = 2 \Mod 4$}.
                                                    \end{array}\right.
   \]
\end{lem}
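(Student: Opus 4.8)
The plan is to prove this by induction on $h$ combined with the bijection-by-shifting tool, Lemma~\ref{lem:OI}. Fix a leaf-edge $a$ of $G_{h,k}$ incident to a leaf $u$, and let $\calX := t\calP_{G_{h,k}} \subset \IR^E$. By Lemma~\ref{lem:OI},
\[
  \volev{a}(\calX) - \volod{a}(\calX) \ = \ |\Out_a(\calX)| - |\In_a(\calX)|,
\]
so the task reduces to understanding the points whose $a$-coordinate is extremal relative to the shift $w \mapsto w + \um_a$. The point of this is that the constraints touching $a$ are exactly the triangle and perimeter inequalities at the internal node $r$ incident to $a$; writing $b,c$ for the other two edges at $r$, a point $w$ lies in $\Out_a(\calX)$ iff $w_a$ is even and increasing it by $1$ violates one of $w_a \le w_b + w_c$ or $w_a + w_b + w_c \le t$, i.e. iff $w_a = w_b + w_c$ or $w_a + w_b + w_c = t$ (an even-parity boundary point), and similarly $w \in \In_a(\calX)$ iff $w_a$ is odd and $w_a - 1$ leaves $\calX$, i.e. $w_a = w_b + w_c + \dots$ hmm — more precisely $w_a = 0$ is impossible since $w_a$ odd, so $\In_a$ consists of odd-$w_a$ points with $w_a - 1 < 0$ being vacuous, hence the relevant constraint is $w_b \le w_a + w_c$ or $w_c \le w_a + w_b$ becoming tight; one must track exactly which inequalities are responsible. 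I would set up this case analysis carefully.

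The base case $h = 1$: then $G_{1,k}$ is a "star of loops" — a single internal structure with $k$ loops and one leaf-edge $a$ — and $t\calP_{G_{1,k}}$ has an explicitly describable lattice-point structure, from which one computes directly that $\volev{a} - \volod{a} = (\tfrac t2 + 1)^k$ for every even $t$ (the loop variables each range over $\{0,1,\dots,\lfloor t/2\rfloor\}$-many even values once $a$ is fixed to $0$, roughly speaking). For the inductive step with $h \ge 2$, I would peel off a leaf-path: pick an internal node $r'$ adjacent to two leaf-edges $b', c'$ (distinct from $a$, possible since $h \ge 2$), and relate $t\calP_{G_{h,k}}$ to $t\calP_{G_{h-1,k}}$ by the same kind of "projection" used in the proof of Lemma~\ref{lem:onelessleafpath} and Theorem~\ref{thm:thevertices}. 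The dependence on $t \bmod 4$ should emerge exactly as in Lemma~\ref{lem:summationloops}: when $t \equiv 0 \pmod 4$ the relevant half-integer boundary phenomena line up, contributing $(\tfrac t2+1)^k$, whereas when $t \equiv 2 \pmod 4$ and $h > 1$ a parity mismatch in the peeled leaf-path variables makes $\Out_a$ and $\In_a$ equinumerous, giving $0$.

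The main obstacle I anticipate is the bookkeeping in the inductive step: when one projects away the leaf $b', c'$ pair, the shifting bijection on the $a$-coordinate interacts with the new constraints at $r'$, and one must verify that $|\Out_a| - |\In_a|$ for $G_{h,k}$ equals (some combination involving) $|\Out_a| - |\In_a|$ for $G_{h-1,k}$ — and crucially that the parity of $t$ enters with the claimed threshold $h = 1$ versus $h > 1$. This is exactly the kind of fine parity accounting that the authors flag as "technical," and getting the $t \equiv 2 \pmod 4$ cancellation right (rather than off by a lower-order term) will require care. A secondary subtlety is handling the degenerate cases where $a$, $b$, $c$ at node $r$ are not pairwise distinct, i.e. when the tree underlying $G_{h,k}$ is very small; these should be checked separately as base cases.
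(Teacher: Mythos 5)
There is a genuine gap: your write-up names the right tools (the shifting bijection of Lemma~\ref{lem:OI} and the boundary description of $\Out_a$ and $\In_a$, which the paper isolates as Lemma~\ref{lemma:OeIe}), but both pillars of your induction are left as unproved claims. The base case $h=1$ with arbitrary $k$ is not ``explicitly describable'' in the way you suggest: $G_{1,k}$ is a caterpillar whose central-path weights couple all the loops through the triangle and perimeter constraints, so the loop variables do not range independently once $w_a$ is fixed, and establishing $\volev{a}(t\calP_{G_{1,k}})-\volod{a}(t\calP_{G_{1,k}})=(\frac t2+1)^k$ for all $k$ is essentially as hard as the lemma itself restricted to $h=1$. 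Your inductive step also breaks structurally: for $h=2$ there is no internal node adjacent to two leaf-edges distinct from $a$, so the ``peel a leaf-pair away from $a$'' move is unavailable there; and even when it is available, you never supply the mechanism by which $|\Out_a|-|\In_a|$ for $G_{h,k}$ is controlled by the same quantity for $G_{h-1,k}$, nor why the $t\equiv 2\Mod 4$ case yields exact cancellation rather than a lower-order discrepancy --- which is precisely the ``technical'' content of the lemma.

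For comparison, the paper proceeds by induction on $h+k$ with two explicitly computed base cases, $(h,k)=(2,0)$ (giving $1$ or $0$ according to $t\Mod 4$) and $(h,k)=(1,1)$ (giving $\frac t2+1$), and the inductive engine is Lemma~\ref{lem:inductivestep}: splitting $G_{h,k}$ at the node $q$ incident to $a$ into two edge-disjoint rooted subgraphs $B$ and $C$, and using Lemma~\ref{lemma:OeIe} to show that each point of $\Out_a(t\calP_{G_{h,k}})$ corresponds bijectively to a pair $(w^B,w^C)\in t\calP_B\times t\calP_C$ with $w^B_b+w^C_c$ even (and $\In_a$ to such pairs with $w^B_b+w^C_c$ odd), because $w_a$ is then forced to $\min\{t-w_b-w_c,\,w_b+w_c\}$ resp.\ $\max\{w_b-w_c,\,w_c-w_b\}$ and the constraints at $q$ are automatically satisfied. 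This yields the multiplicative identity $\Delta_{h,k}(t)=\Delta_B(t)\cdot\Delta_C(t)$, from which the $\Mod 4$ dichotomy falls out as multiplication by $\Delta_{2,0}(t)\in\{0,1\}$ when $h>1$ and by $\Delta_{1,1}(t)=\frac t2+1$ when $h=1$. That product factorization at the node next to $a$ is the key idea missing from your proposal; without it (or a worked substitute), the parity cancellation you anticipate for $t\equiv 2\Mod 4$ and the exact value $(\frac t2+1)^k$ for $t\equiv 0\Mod 4$ remain unjustified.
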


\subsection{Proof of the lemma of the loops}

Lemma~\ref{lem:summationloops} is a straight consequence of the supporting lemmas.

\vspace{2mm}
\begin{proof}[Proof of Lemma~\ref{lem:summationloops} (the loops)]
  Because $h$ and $k$ are integers such that $h \geq 0$ and $k \geq 1$, $G_{h+1,k-1}$ has a leaf-edge $a$.   We have that
  \begin{align}
     \vol(G_{h,k},\Hzz) - \vol(G_{h,k},H_{0,1})
     & = \  \volev{a}(t\calP_{G_{h+1,k-1}}) - \volod{a}(t\calP_{G_{h+1,k-1}}) \nonumber \\
     & = \ \left\{\begin{array}{ll}
                           (\frac{t}{2}+1)^{k+1} & \mbox{if $t = 0 \Mod 4$  or $h = 0$}, \\
                           0            & \mbox{if $t = 2 \Mod 4$},
                                                    \end{array}\right. \nonumber
  \end{align}
  where the first equality is by Lemma~\ref{lem:reduction} and, 
  noting that $h+1 \geq 1$ and $k-1 \geq 0$, 
  the second equality is obtained by applying Lemma~\ref{lem:rootedloopedtrees} to $G_{h+1,k-1}$. 
\end{proof}

\subsection{Proofs of the supporting lemmas}

\newcommand{\Hzo}{\ell}

We begin by proving Lemma~\ref{lem:reduction} that moves our attention from the
polytope $\calQ_{G_{h,k}}$ to the polytope $\calP_{G_{h,k}}$. 

\begin{proof}[Proof of Lemma~\ref{lem:reduction} ($\calQ$ to $\calP$)]
Let $a$ be an edge of $G_{h,k}$, and let $\ell$ be a loop of $G_{h,k}$, both incident with the same node $r$. 
By Lemma~\ref{lem:sametypecosets}, we may assume that $H_{0,1}$ is the subgraph induced by the loop $\ell$
and that the configuration is as depicted in Figure~\ref{fig:massage}(a). In the proof we write $\Hzo$ instead of $H_{0,1}$.
This lemma is concerned with the evaluation of 
\[
    \vol(G_{h,k},\Hzz) - \vol(G_{h,k},\Hzo) = |\parte(G_{h,k},\Hzz)| - |\parte(G_{h,k},\Hzo)|.
\]
\begin{figure}[hbt]
  \centering\scalebox{0.9}{\includegraphics{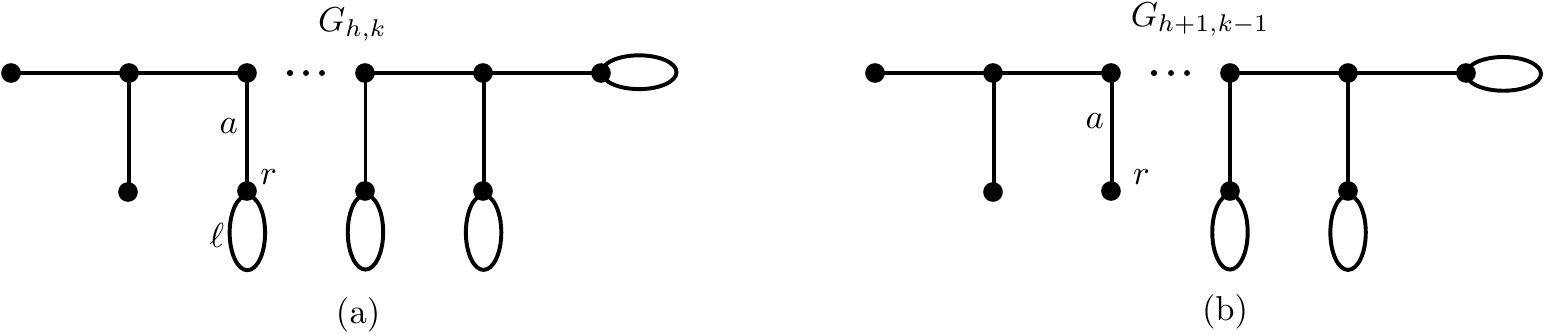}}
  \caption{(a) $G_{h,k}$ and the edge $a$, loop $\ell$ and node $r$. 
    (b) $G_{h+1,k-1}$ obtained from $G_{h,k}$ by deleting $\ell$.}
  \label{fig:massage}
\end{figure}

For $\alpha = 0,\ldots,t$, let
  \begin{align}
    \ZZeven{\ell}{\alpha} & := \{(w,z) \in \parte(G_{h,k},\Hzz): z_r = \alpha\}, \mbox{ and}\nonumber \\
    \ZZodd{\ell}{\alpha}  & := \{(w,z) \in \parte(G_{h,k},\Hzo): z_r = \alpha\}.  \nonumber 
  \end{align} 
The sets $\ZZeven{\ell}{0}, \ldots, \ZZeven{\ell}{t}$ form a partition of  $\parte(G_{h,k},\Hzz)$ and the sets
$\ZZodd{\ell}{0}, \ldots, \ZZodd{\ell}{t}$ form a partition of $\parte(G_{h,k},\Hzo)$.
Accordingly,
  \begin{align}
    \vol(G_{h,k},\Hzz)   & = |\ZZeven{\ell}{0}|  + \cdots + |\ZZeven{\ell}{t}|, \mbox{ and} \label{evenl} \\
    \vol(G_{h,k},\Hzo)   & = |\ZZodd{\ell}{0}|   + \cdots + |\ZZodd{\ell}{t}|.   \label{oddl}
  \end{align}
  
Let $E$ be the set of edges and $I$ be the set of internal nodes of~$G_{h,k}$. Consider 
the function ${\phi^{}_{+} : \IR^E \times \IR^I \rightarrow \IR^E \times \IR^I}$, given by the shifting operation
\begin{align}
   \phi^{}_{+}(w,z) & = (w,z) + (\um_{\ell},\um_r). \nonumber
\end{align}
One can verify that for every $\alpha$ even, $\alpha < t$, the
function $\phi^{}_{+}$ is a bijection between $\ZZeven{\ell}{\alpha}$
and $\ZZodd{\ell}{\alpha+1}$.  
Therefore, for every $\alpha$ even, $\alpha < t$, we have that
$|\ZZeven{\ell}{\alpha}| = |\ZZodd{\ell}{\alpha+1}|$.

Similarly, consider 
the function ${\phi^{}_{-} : \IR^E \times \IR^I \rightarrow \IR^E \times \IR^I}$, given by  the reverse shifting
\begin{align}
   \phi_{-}(w,z) & = (w,z) - (\um_{\ell},\um_r). \nonumber
\end{align}
One can also check that for every $\alpha$ odd, $\alpha \leq t$, the function $\phi^{}_{-}$ is a bijection between $\ZZeven{\ell}{\alpha}$ and $\ZZodd{\ell}{\alpha-1}$.
Consequently, for every $\alpha$ odd, $\alpha \leq t$, we have that $|\ZZeven{\ell}{\alpha}| = |\ZZodd{\ell}{\alpha-1}|$.

Keeping in mind the induced bijections given by the shiftings $\phi_+$ and $\phi_-$, and using Equalities
 \eqref{evenl} and \eqref{oddl},
 we can see that for every nonnegative even integer $t$:
\begin{align}
  \vol(G_{h,k},\Hzz)- \vol(G_{h,k},\Hzo)
  & = |\ZZeven{\ell}{0}|  + \cdots + |\ZZeven{\ell}{t}| - |\ZZodd{\ell}{0}|   - \cdots - |\ZZodd{\ell}{t}| \nonumber \\ 
  & =       (|\ZZeven{\ell}{0}| - |\ZZodd{\ell}{1}|) + (|\ZZeven{\ell}{1}| - |\ZZodd{\ell}{0}|) + \cdots   \nonumber \\ 
  & \quad + (|\ZZeven{\ell}{t-2}| - |\ZZodd{\ell}{t-1}|) + (|\ZZeven{\ell}{t-1}| - |\ZZodd{\ell}{t-2}|) \nonumber \\
  & \quad + (|\ZZeven{\ell}{t}| - |\ZZodd{\ell}{t}|) \nonumber \\
  & = |\ZZeven{\ell}{t}| - |\ZZodd{\ell}{t}|. \nonumber
\end{align}
Our next and last step in this proof is showing that for every nonnegative even integer $t$:
\begin{align}\label{eq:bijections}
  |\ZZeven{\ell}{t}| - |\ZZodd{\ell}{t}| & =  \volev{a}(t\calP_{G_{h+1,k-1}}) - \volod{a}(t\calP_{G_{h+1,k-1}}).
\end{align}
For this we outline bijections
between the sets
$\ZZeven{\ell}{t} \mbox{ and } \{w' \in t\calP_{G_{h+1,k-1}}: w'_a \mbox{ is even}\}$
and between the sets
$\ZZodd{\ell}{t}  \mbox{ and } \{w' \in t\calP_{G_{h+1,k-1}}: w'_a \mbox{ is odd}\}.$

The $(h+1,k-1)$-caterpillar $G_{h+1,k-1}$ will be seen as resulting from the deletion of the loop $\ell$ from $G_{h,k}$.
Because we denote the edge set of $G_{h,k}$ by $E$, then the edge set of $G_{h+1,k-1}$ is  $E \setminus \{\ell\}$.
If $(w,z)$ is a point in $\ZZeven{\ell}{t} \cup \ZZodd{\ell}{t}$, then
$w_e$ is an even integer for each $e$ in $E \setminus \{\ell\}$.
Moreover, as each point $(w,z) \in \calQ_{G_{h,k}}$ satisfies $w_a +
2w_{\ell} = 2z_r$, if $(w,z)$ is a point in $\ZZeven{\ell}{t}$, then
$w_a = 0 \Mod 4$ and if $(w,z)$ is a point in $\ZZodd{\ell}{t}$, then
$w_a = 2 \Mod 4$.

Consider the function from $\IR^E \times \IR^I$ to $\IR^{E \setminus \{\ell\}}$ given by $(w,z) \mapsto w'$, where
$w'_e = w_e/2$ for each $e$ in $E \setminus \{\ell\}$.
From the above observations concerning parity,
one can verify that $(w,z) \mapsto w'$ injectively maps $\ZZeven{\ell}{t}$ into
$\{w' \in t\calP_{G_{h+1,k-1}}: w'_a \mbox{ is even}\}$ and 
$\ZZodd{\ell}{t}$ into $\{w' \in t\calP_{G_{h+1,k-1}}: w'_a \mbox{ is odd}\}$.

Conversely, the function from $\IR^{E \setminus \{\ell\}}$ to $\IR^E \times \IR^I$ given by $w' \mapsto (w,z)$,
where firstly we set $w_e = 2w'_e$ for each $e$ in $E \setminus \{\ell\}$ and set $w_{\ell} = t - w'_a$,
and secondly for each internal node~$v$ of $G_{h,k}$ we set $z_v$ to the sum of the components $w$ associated to the edges incident to~$v$,
adding twice the value associated to loops. Once more, one can check that $w' \mapsto (w,z)$  provides the inverse injective function.
\end{proof}

\begin{remark} 
From the previous proof, one can derive that for every nonnegative odd integer $t$:
\begin{align}
   \vol(G_{h,k},\Hzz)- \vol(G_{h,k},\Hzo) & = |\ZZeven{\ell}{0}|  + \cdots + |\ZZeven{\ell}{t}| - |\ZZodd{\ell}{0}|   - \cdots - |\ZZodd{\ell}{t}| \nonumber\\ 
   & = (|\ZZeven{\ell}{0}| - |\ZZodd{\ell}{1}|) + (|\ZZeven{\ell}{1}| - |\ZZodd{\ell}{0}|) + \cdots \nonumber \\ 
   & \quad + (|\ZZeven{\ell}{t-1}| - |\ZZodd{\ell}{t}|)  +  (|\ZZeven{\ell}{t}| - |\ZZodd{\ell}{t-1}|)  \nonumber \\
   & = 0. \nonumber
\end{align}
This equality also follows from the more generic lemma of Liu and Osserman~\cite[Lemma~3.3]{LiuO2006}.\qed
\end{remark}

It remains to prove Lemma~\ref{lem:rootedloopedtrees} of the evaluation.
Consider an $(h,k)$-caterpillar $G_{h,k}$ with $h \geq 1$.
Let $r$ be a leaf of $G_{h,k}$, let~$a$ be the edge incident to $r$,
let $q$ be the other end node of $a$.
There are two other edges, not necessarily distinct,  $b$ and $c$ incident to~$q$.
If $w$ is a point in $\Out_a(t\calP_{G_{h,k}})$ then $w_a + w_b + w_c = t$ or $w_a= w_b + w_c$.
Therefore, $w_a = \min\{t-w_b-w_c, w_b+w_c\}$.
Similarly, if $w$ is a point in $\In_a(t\calP_{G_{h,k}})$, then $w_b = w_a + w_c$ or
$w_c = w_a + w_b$. Thus, $w_a = \max\{w_b-w_c, w_c-w_b\}$. 
This digression is summarized in the following lemma.

\begin{lem}\label{lemma:OeIe}
  If $a$ is a leaf-edge of $G_{h,k}$ and $b$ and $c$ are two other edges incident to $a$, 
  not necessarily distinct,
  then  ${w_a = \min\{t-w_b-w_c, w_b+w_c\}}$ for every ${w \in \Out_a(t\calP_{G_{h,k}})}$
  and ${w_a = \max\{w_b-w_c, w_c-w_b\}}$ for every ${w \in \In_a(t\calP_{G_{h,k}})}$. \qed
\end{lem}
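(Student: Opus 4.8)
The plan is to unwind the definitions of $\Out_a$ and $\In_a$ and to pin down exactly which of the defining inequalities of $t\calP_{G_{h,k}}$ the coordinate $w_a$ participates in. Since $r$ is a leaf, the node $q$ (the other endpoint of $a$) is the unique internal node incident to $a$; moreover $q$ has degree three, so $a$ is not a one-edge component of $G_{h,k}$, and therefore $w_a$ occurs only in the four dilated inequalities coming from $S^{\calP}_t(q)$, namely
\begin{align*}
  w_a &\le w_b + w_c, \\
  w_b &\le w_a + w_c, \\
  w_c &\le w_a + w_b, \\
  w_a + w_b + w_c &\le t,
\end{align*}
where we allow $b = c$ in case $q$ carries a loop.

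First I would handle $w \in \Out_a(t\calP_{G_{h,k}})$. By definition $w$ satisfies every defining inequality of $t\calP_{G_{h,k}}$ while $w + \um_a$ violates at least one. Adding $\um_a$ raises $w_a$ by one and leaves every other coordinate fixed, so among the four inequalities above it can break only $w_a \le w_b + w_c$ and $w_a + w_b + w_c \le t$; every other constraint of the whole system is either unchanged or relaxed. Hence $w_a + 1 > w_b + w_c$ or $w_a + 1 + w_b + w_c > t$, and since all coordinates are integers this forces $w_a = w_b + w_c$ or $w_a = t - w_b - w_c$. Combining with the two valid inequalities $w_a \le w_b + w_c$ and $w_a \le t - w_b - w_c$ yields $w_a = \min\{t - w_b - w_c,\ w_b + w_c\}$.

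For $w \in \In_a(t\calP_{G_{h,k}})$ the argument is the mirror image. Now $w - \um_a$ must leave $t\calP_{G_{h,k}}$, and lowering $w_a$ by one can break only $w_b \le w_a + w_c$ and $w_c \le w_a + w_b$, so $w_b > w_a - 1 + w_c$ or $w_c > w_a - 1 + w_b$; integrality then gives $w_a = w_b - w_c$ or $w_a = w_c - w_b$, and together with the valid inequalities $w_b - w_c \le w_a$ and $w_c - w_b \le w_a$ this gives $w_a = \max\{w_b - w_c,\ w_c - w_b\}$.

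I do not anticipate a genuine obstacle here: the only points requiring a little care are the remark that, because $r$ is a leaf, every occurrence of $w_a$ is localized at the single internal node $q$ (so no inequality at a distant node can be the one that fails), the degenerate case $b = c$ when $q$ has a loop, and the passage from strict to non-strict inequalities, which is immediate from the integrality of the points in $\Out_a$ and $\In_a$.
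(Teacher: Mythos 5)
Your argument is correct and matches the paper's own justification, which is given as the short digression preceding the lemma: since $a$ meets the single internal node $q$, the only constraints that can fail after shifting $w_a$ by $\pm 1$ are $w_a \le w_b + w_c$ or the perimeter inequality (for $\Out_a$) and the other two triangle inequalities (for $\In_a$), and integrality turns the violation into tightness at $w$. Your write-up is simply a more explicit version of the same reasoning, including the harmless $b=c$ loop case.
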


It is a consequence of Lemma~\ref{lem:sametypecosets} and Lemma~\ref{lem:reduction} that the value of
\[\volev{a}(t\calP_{G_{h,k}}) - \volod{a}(t\calP_{G_{h,k}})\] does not depend on the chosen leaf-edge $a$;
it depends only on the integers $t$, $h$ and $k$.
Thus, for conciseness, we may define \[\Delta_{h,k}(t) := \volev{a}(t\calP_{G_{h,k}}) - \volod{a}(t\calP_{G_{h,k}}),\] where
$a$ is any leaf-edge of $G_{h,k}$.  We now state and prove a lemma that 
supplies the inductive step used in the validation of Lemma~\ref{lem:rootedloopedtrees}.

\begin{lem}[the inductive step]\label{lem:inductivestep}
   Let $h$ and $k$ be integers such that  $h \geq 1$, $k \geq 0$, and $h+k \geq 3$.
   For every nonnegative even integer $t$,  we have that
   \[
     \Delta_{h,k}(t)
     \ = \ \left\{\begin{array}{ll}
                           \Delta_{1,1}(t) \times \Delta_{1,k-1}(t) & \mbox{if $h = 1$}, \\
                           \Delta_{2,0}(t) \times \Delta_{h-1,k}(t) & \mbox{if $h > 1$}. 
                                                    \end{array}\right.
   \]
\end{lem}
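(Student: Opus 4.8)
The plan is to prove this by exhibiting an explicit bijection on lattice points that factors $\Out_a$ and $\In_a$ according to a decomposition of the caterpillar $G_{h,k}$ at the node $q$ adjacent to the chosen leaf-edge $a$. Write $r$ for the leaf, $a=rq$ the leaf-edge, and let $b$ and $c$ be the two other edges incident to $q$ (possibly equal, in the case $b=c$ a loop, or both stems of a tree). By Lemma~\ref{lem:sametypecosets} the quantity $\Delta_{h,k}(t)$ does not depend on which leaf-edge we pick, so we are free to choose $a$ to be a stem incident to the \emph{root} $q$ of the caterpillar. With this choice, deleting the leaf $r$ together with $a$ leaves a graph which, after the appropriate relabeling, splits at $q$ into two smaller caterpillars sharing the node $q$: one ``side'' containing $b$ and the other containing $c$. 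When $h=1$ the unique leaf-edge forces one side to be a single loop together with $q$ — this is the $G_{1,1}$ piece — and the other side to be $G_{1,k-1}$; when $h>1$ there are two leaf-edges at $q$, giving a $G_{2,0}$ piece (the path $u\!-\!q\!-\!v$ with $q$ internal, where $a,b$ are its two leaf-edges) and a $G_{h-1,k}$ piece attached at $q$ through $c$.

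Next I would translate the characterization in Lemma~\ref{lemma:OeIe} into the language of this split. The key point is that $\Out_a(t\calP_{G_{h,k}})$ consists exactly of the integer points $w$ with $w_a=\min\{t-w_b-w_c,\,w_b+w_c\}$, i.e.\ those for which $w_a$ is pinned down by the constraints at $q$; similarly $\In_a$ consists of the points with $w_a=\max\{w_b-w_c,\,w_c-w_b\}$. In either case the value $w_a$ is a function of $(w_b,w_c)$, and $w_a$ is even precisely under a parity condition on $w_b+w_c$ (for $\Out_a$) or $w_b-w_c$ (for $\In_a$). Because the edge set of $G_{h,k}$ decomposes (away from $a$) into the edges of the $b$-side and the edges of the $c$-side, with no constraint linking the two sides except through the variables at $q$ which $w_a$ now absorbs, a point $w\in\Out_a$ or $\In_a$ decomposes into an independent pair of points, one in each side's polytope $t\calP$, subject only to a joint parity condition on $w_b$ and $w_c$. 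Counting even-$w_a$ minus odd-$w_a$ over $\Out_a$ then multiplies across the two sides, and likewise over $\In_a$; combining via Lemma~\ref{lem:OI} ($\Delta_{h,k}(t)=|\Out_a|-|\In_a|$) gives the product formula, once one checks that the ``$\Out$'' contributions multiply with ``$\Out$'' contributions and ``$\In$'' with ``$\In$'', so that the $|\Out|-|\In|$ of the whole is the product of the $\Delta$'s of the pieces. For this last algebraic bookkeeping it is cleanest to note that $\volev{a}(t\calP_{G_{h,k}})-\volod{a}(t\calP_{G_{h,k}})$ equals a signed sum over lattice points $w$ of $(-1)^{w_a}$ restricted to the ``boundary'' sets, and that $(-1)^{w_a}$ factors through the split.

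The main obstacle I expect is making the decomposition of lattice points at $q$ genuinely \emph{bijective} rather than merely injective in each direction: one must verify that every integer point of the product of the two smaller polytopes, satisfying the matching parity condition, lifts back to a unique point of $\Out_a$ (resp.\ $\In_a$) of $t\calP_{G_{h,k}}$, with the correct parity of $w_a$, and that the two boundary sets $\Out_a$ and $\In_a$ are exactly the images of the ``$\Out$-type'' and ``$\In$-type'' joint parity classes. This amounts to checking that imposing $w_a=\min\{t-w_b-w_c,w_b+w_c\}$ (resp.\ the $\max$) is compatible with \emph{all} the remaining triangle and perimeter inequalities of $G_{h,k}$ precisely when the corresponding projections lie in the two smaller polytopes — a verification in the same spirit as the gluing arguments in the proofs of Theorem~\ref{thm:thevertices} and Lemma~\ref{lem:onelessleafpath}, and which should go through because $a$ is a leaf-edge and hence appears in only one triangle system. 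The two base cases $h=1$ and $h>1$ differ only in which elementary caterpillar ($G_{1,1}$ or $G_{2,0}$) is split off, so the argument is uniform; the $h>1$ case additionally uses that $b$ is then a genuine leaf-edge distinct from $a$, so the $G_{2,0}$ factor really is the two-leaf path and $\Delta_{2,0}(t)$ is its contribution.
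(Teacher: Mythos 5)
Your proposal follows essentially the same route as the paper's proof: pick the stem $a$ at the node $q$, split the rest of the graph at $q$ into the $b$-side ($G_{2,0}$ or $G_{1,1}$) and the $c$-side, use Lemma~\ref{lemma:OeIe} to pin down $w_a$ on $\Out_a$ and $\In_a$ and read off the parity of $w_b+w_c$, and then obtain $\Delta_{h,k}(t)=\Delta_B(t)\times\Delta_C(t)$ from Lemma~\ref{lem:OI} together with the bijection onto parity-matched pairs of points of $t\calP_B$ and $t\calP_C$ --- exactly the verification the paper carries out. One small correction of detail: in this decomposition the edge $a$ belongs to neither piece, so the $G_{2,0}$ factor is just the single leaf-edge $b$ rooted at $q$, not the two-edge path through $q$ containing $a$ (which would not even be a $\{1,3\}$-graph).
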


\begin{proof}
  Let $a$ be a leaf-edge of $G_{h,k}$.
  Let $q$ be the other end vertex of $a$.
  Because $h+k \geq 3$, there are two other distinct edges $b$ and $c$ of $G_{h,k}$ incident to $q$.
  We may assume that $a$ is a stem of $G_{h,k}$ and $b$ is either a leaf-edge of $G_{h,k}$ or is incident to a loop
  (Figure~\ref{fig:induction}(a)). 
  We shall conveniently delineate two edge disjoint subgraphs of $G_{h,k}$ having the vertex $q$ as root.
  The first  is the subgraph $B$ that has $q$ as its root and $b$ as its stem.
  The second is the subgraph $C$ having $q$ as its root and $c$ as a stem.
  If $b$ is a leaf-edge of $G_{h,k}$, then $B=G_{2,0}$ and $C=G_{h-1,k}$, otherwise $B=G_{1,1}$ and $C=G_{h,k-1}$ (Figure~\ref{fig:induction}(b)).
  If we define
  \[
  \Delta_B(t) := \volev{b}(t\calP_{B}) - \volod{b}(t\calP_{B}) \mbox{ \ and \ } 
  \Delta_C(t) := \volev{c}(t\calP_{C}) - \volod{c}(t\calP_{C}), 
  \]
  then the assertion of this lemma translates into the equality $\Delta_{h,k}(t) = \Delta_B(t) \times \Delta_C(t)$. 
\begin{figure}[hbt]
\begin{center}
  \scalebox{0.85}{\includegraphics{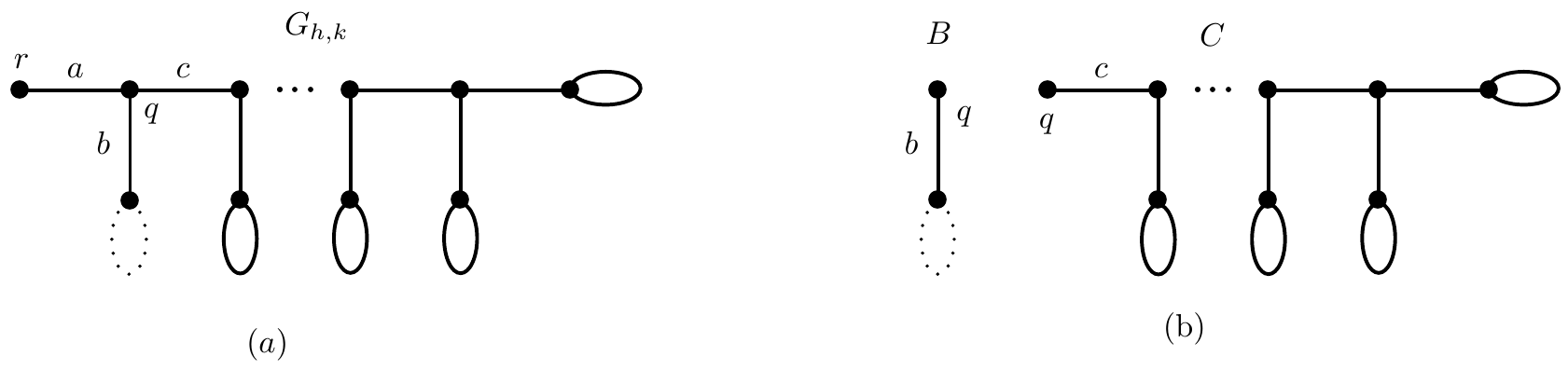}}
\end{center}  
  \caption{
  (a) $G_{h,k}$ with stem $a$, edges $b$ and $c$, and node $q$. The dotted loop may not exist.
  (b) The graph $B$ with stem $b$ and root $q$ and the graph $C$ with stem $c$ and root $q$.}\label{fig:induction}
\end{figure}

In order to prove the lemma it suffices to show that
\begin{align}
|\Out_a(t\calP_{G_{h,k}})| & = \volev{b}(t\calP_B) \times \volev{c}(t\calP_C)
                          + \volod{b}(t\calP_B) \times \volod{c}(t\calP_C), \mbox{ and} \label{eq:out}\\
|\In_a(t\calP_{G_{h,k}})| & = \volev{b}(t\calP_B) \times \volod{c}(t\calP_C) 
                    + \volod{b}(t\calP_B) \times \volev{c}(t\calP_C). \label{eq:in} 
\end{align}
Indeed, by setting $\calX = t\calP_{G_{h,k}}$ and $e = a$ in Lemma~\ref{lem:OI} of the
bijection by shifting, we attain as consequence that \newpage
\begin{align}
\Delta_{h,k}(t) & = |\Out_a(t\calP_{G_{h,k}})| - |\In_a(t\calP_{G_{h,k}})| \nonumber \\
               & = \volev{b}(t\calP_B) \times \volev{c}(t\calP_C)  + \volod{b}(t\calP_B) \times \volod{c}(t\calP_C) \nonumber \\
               & {\white = \ } - \volev{b}(t\calP_B) \times \volod{c}(t\calP_C)  - \volod{b}(t\calP_B) \times \volev{c}(t\calP_C) \nonumber \\
               & =  (\volev{b}(t\calP_B)- \volod{b}(t\calP_B)) \times (\volev{c}(t\calP_C)-\volod{c}(t\calP_C)) \nonumber \\ 
               & = \Delta_B(t) \times \Delta_C(t), \nonumber
\end{align}  
where the first equality is by Lemma~\ref{lem:OI},
the second equality is due to~\eqref{eq:out} and~\eqref{eq:in} and the last equality follows by definition.  

We start by showing Equation~\eqref{eq:out}.
For this we give maps between~$\Out_a(t\calP_{G_{h,k}})$ and the
collection of pairs~$(w^B,w^C)$ of integer points with~$w^B$ in~$t\calP_B$
and $w^C$ in $t\calP_C$ such that~$w^B_b + w^C_c$ is even,
indicating that $w^B_b$ and $w^C_c$ are both even or both odd.  
By Lemma~\ref{lemma:OeIe}, for every $w$ in $\Out_a(t\calP_{G_{h,k}})$ we have that
$w_a = \min\{t-w_b-w_c, w_b+w_c\}$,
implying that $w_b+w_c$ is even because~$t$ and $w_a$ are even.
Thus, the restriction of each $w$ in $\Out_a(t\calP_{G_{h,k}})$ to
$t\calP_B$ and to $t\calP_C$ produces a unique pair $(w^B,w^C)$ of integer
points such that $w^B_b + w^C_c$ is even.
Conversely, given such a pair $(w^B,w^C)$ of integers point with 
$w^B_b + w^C_c$ even,  we construct an integer point $w$ by setting
$w_e = w^B_e$ for $e \in E(B)$,
setting $w_e = w^C_e$ for~$e \in E(C)$,
and setting $w_a = \min\{t-w_b-w_c, w_b+w_c\}$.
Clearly~$w$ satisfies the system of inequalities that defines $\calP_{G_{h,k}}$
associated to each internal node $v$ in $G_{h,k}$ with $v \neq q$.
Let us now examine the inequalities associated to the vertex $q$.
Firstly, the value of $w_a$ was set so that $w_a + w_b + w_c \leq t$ and $w_a \leq w_b + w_c$.
Secondly, because ${w_c = w^C_c \geq 0}$ and ${w_b = w^B_b \geq 0}$,
if  $w_a = w_b+w_c$ then $w_b \leq w_a + w_c$ and $w_c \leq w_a + w_b$.
Finally, because $w_b = w^B_b \leq t/2$ and ${w_c = w^C_c \leq t/2}$,
if $w_a = t-w_b-w_c$, then $w_a + w_b =  t-w_b-w_c + w_b = t-w_c \geq w_c$ and
$w_a + w_c =  t-w_b-w_c + w_c = t-w_b \geq w_b$.
This concludes the proof of Equation~\eqref{eq:out}.

We verify the validity of Equation~\eqref{eq:in} in the same fashion.
We mount maps between~$\In_a(t\calP_{G_{h,k}})$ and the
collection of pairs~$(w^B,w^C)$ of integer points with~$w^B$ in~$t\calP_B$
and $w^C$ in $t\calP_C$ such that~$w^B_b + w^C_c$ is odd,
indicating that $w^B_b$ and $w^C_c$ have different parity.  
If $w$ is in $\In_a(t\calP_{G_{h,k}})$, then $w_a$ is odd and, by Lemma~\ref{lemma:OeIe}, 
${w_a = \max\{w_b-w_c, w_c-w_b\}}$, implying that $w_b+w_c$ is odd.
The restriction of $w$ to $t\calP_B$ and to $t\calP_C$
maps $w$ to a pair $(w^B,w^C)$ of integer points such that $w^B_b + w^C_c$ is odd.
This gives us one of the maps we want.
The other map is the extention of such a pair $(w^B,w^C)$ of integer points with
$w^B_b + w^C_c$ odd to the integer point $w$
such that $w_a = \max\{w^B_b-w^C_c, w^C_c-w^B_b\}$.
This gives us the converse map.
It is clear that $w$ satisfies the inequalities associated to each internal node $v$ of~$G_{h,k}$ with $v \neq q$.
By symmetry we may assume that $w_a = w_b - w_c = w^B_b - w^C_c$.
Because $0 \leq w^B_b \leq t/2$ and $0 \leq w^C_c \leq t/2$, then
$0 \leq w_a \leq t/2$,  $w_a \leq w_b + w_c$, $w_b = w_a + w_c$, $w_c = w_b - w_a \leq w_a + w_b$ and
$w_a + w_b + w_c = w^B_b - w^C_c + w^B_b + w^C_c = 2 w^B_b \leq t$. 
This concludes the proof of the lemma.
\end{proof}

We have now all the tools to prove Lemma~\ref{lem:rootedloopedtrees} of the evaluation
and close the proof of Theorem~\ref{thm:main} of the period of $L^{\calP}_G(t)$.

\newpage
\begin{proof}[Proof of Lemma~\ref{lem:rootedloopedtrees} (the evaluation)]
Using the abbreviation  $\Delta_{h,k}(t) = \volev{a}(t\calP_{G_{h,k}}) - \volod{a}(t\calP_{G_{h,k}})$,
we have to prove that
   \[
     \Delta_{h,k}(t)     \ = \ \left\{\begin{array}{ll}
                           (\frac{t}{2}+1)^{k} & \mbox{if $t = 0 \Mod 4$  or $h = 1$}, \\
                           0            & \mbox{if $t = 2 \Mod 4$}.
                                    \end{array}\right.
   \]
  The proof is by induction on $h+k$.
  The base case is $h+k = 2$ and we have two possibilities, either $(h,k) = (2,0)$ or $(h,k) = (1,1)$.
  Let us first consider the case  $(h,k) = (2,0)$.
  The system that defines $t\calP_{G_{2,0}}$ is simply 
  \begin{align}
  0 & \leq  w_a  \leq  t/2, \nonumber
  \end{align}
  where $a$ is the unique edge of $G_{2,0}$.
  The integer points in $t\calP_{G_{2,0}}$ are   $w_a = 0, 1, \ldots, t/2$.
  If~$t = 0 \Mod 4$ then $t/2$ is even and $\Delta_{2,0}(t) = 1$.
  If $t = 2 \Mod 4$ then $t/2$ is odd and~$\Delta_{2,0}(t) = 0$.
  Therefore,
   \begin{align}
     \label{eq:delta20}
     \Delta_{2,0}(t)   &  \ = \ \left\{\begin{array}{ll}
                           1 & \mbox{if $t = 0 \Mod 4$}, \\
                           0 & \mbox{if $t = 2 \Mod 4$}.
     \end{array}\right.
   \end{align}

  Now we consider the case $(h,k) = (1,1)$.
  The system that defines $t\calP_{G_{1,1}}$ is simply 
  \begin{eqnarray*}
    w_a & \leq & t - 2w_{\ell}\\
    w_a & \leq & 2w_{\ell} \\
    w_a & \geq & 0,
  \end{eqnarray*}
  where $a$ is the leaf-edge and $\ell$ is the loop of $G_{1,1}$.
  By Lemma~\ref{lemma:OeIe}, we have that $w_a = 0$ for every $w$ in $\In_a(t\calP_{G_{1,1}})$
  and that ${w_a = \min\{t-2w_{\ell}, 2w_{\ell}\}}$ for every $w$ in $\Out_a(t\calP_{G_{1,1}})$.
  Thus,  $\In_a(t\calP_{G_{1,1}}) = \emptyset$ and, for each~$w_{\ell}$ in $\{0,\ldots,t/2\}$, there is a unique point $w$ in $\Out_a(t\calP_{G_{1,1}})$.
  Therefore, by setting $\calX = t\calP_{G_{1,1}}$ and $e = a$ in Lemma~\ref{lem:OI} of the bijection by shifting,
  we attain as consequence that for every nonnegative even integer $t$
\begin{align}
\label{eq:delta11}
\Delta_{1,1}(t) & = |\Out_a(t\calP_{G_{h,k}})| - |\In_a(t\calP_{G_{h,k}})| = \frac{t}{2}+1. 
\end{align}

Now, we may assume that $h+k \geq 3$. If $h = 1$, then
for every nonnegative even integer~$t$
\[
\Delta_{h,k}(t)  = \Delta_{1,1}(t) \times \Delta_{1,k-1}(t) 
                = \big(\frac{t}{2}+1\big) \times \big(\frac{t}{2}+1\big)^{k-1} 
                = \big(\frac{t}{2}+1\big)^k, 
\]
where the first equality follows from Lemma~\ref{lem:inductivestep} and the second equality is due to \eqref{eq:delta11} and the induction hypothesis.

If $h > 1$, then for every nonnegative even integer~$t$
\[
\Delta_{h,k}(t)  = \Delta_{2,0}(t) \times \Delta_{h-1,k}(t)
\ = \ \left\{\begin{array}{ll}
                           1   \times \big(\frac{t}{2}+1\big)^k  = \big(\frac{t}{2}+1\big)^k & \mbox{if $t = 0 \Mod 4$}, \\
                           0   \times \Delta_{h-1,k}(t) = 0         & \mbox{if $t = 2 \Mod 4$},
       \end{array}\right.
\]
where the first equality follows from
Lemma~\ref{lem:inductivestep} and the second equality is due to~\eqref{eq:delta20}
and the induction hypothesis.
With this we finalize the proof of this lemma.
\end{proof}


\section{Further research directions}\label{sec:furtherconnexions}

In this section, we present various notable results on the polytopes of Liu and Osserman, obtained in the course of our investigations, that seem interesting on their own.
We also discuss a surprising relation between these polytopes and invariants used 
to distinguish 3-manifolds as well as an unexpected connection with arrangements of curves.

\subsection{Skeleton of the $\{1,3\}$-trees polytopes}

The next result was previously observed by Liu and Osserman~\cite[proof of Corollary~3.6]{LiuO2006}. 

\begin{lem}\label{lem:fulldim}
  If $T$ is a $\{1,3\}$-tree, then the polytope $\calP_T$ is full-dimensional. 
\end{lem}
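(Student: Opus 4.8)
The plan is to exhibit $|E(T)|$ affinely independent points of $\calP_T$, which is equivalent to showing that $\calP_T$ has dimension $|E(T)|$, i.e.\ is full-dimensional in $\IR^{E(T)}$. A clean way to do this is to produce an interior point: a point $x \in \calP_T$ at which every defining inequality is satisfied strictly. If such an $x$ exists, then a small ball around $x$ lies in $\calP_T$, forcing $\calP_T$ to be full-dimensional. The defining inequalities of $\calP_T$ are, for each internal node $v$ with incident edges $a,b,c$, the three triangle inequalities $w_a \le w_b + w_c$, $w_b \le w_a + w_c$, $w_c \le w_a + w_b$, and the perimeter inequality $w_a + w_b + w_c \le 1$; for a trivial tree (a single edge) there is instead the box constraint $0 \le w_e \le 1/2$.

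First I would dispose of the trivial case: if $T$ is a single edge $e$, then $\calP_T = [0,\tfrac12]$ and $w_e = \tfrac14$ is an interior point, so $\calP_T$ is $1$-dimensional. Next, for a nontrivial $\{1,3\}$-tree $T$, I would simply take the point $x$ with $x_e = \tfrac14$ for every edge $e \in E(T)$. For each internal node $v$ with incident edges $a,b,c$, the triangle inequalities read $\tfrac14 \le \tfrac14 + \tfrac14 = \tfrac12$, which is strict, and the perimeter inequality reads $\tfrac34 \le 1$, again strict. Hence $x$ satisfies every defining inequality of $\calP_T$ strictly, so $x$ lies in the interior of $\calP_T$ and $\calP_T$ is full-dimensional.

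I do not expect any serious obstacle here; the only point requiring a moment's care is to make sure the list of defining inequalities is complete — in particular that for a nontrivial tree there are no box constraints $0 \le w_e \le \tfrac12$ to worry about (those are imposed only on edges that form a component by themselves, which cannot occur in a nontrivial connected tree), and that the nonnegativity $w_e \ge 0$ is a consequence of the triangle inequalities rather than an independent facet-defining inequality. Once the inequality list is pinned down, the constant vector $\tfrac14\,\um_{E(T)}$ works uniformly, and the argument is complete. (Alternatively, one could invoke Theorem~\ref{thm:thevertices}: the vertex set $V_{\calP_T}$ contains $\tfrac12\um_H$ for every collection $H$ of disjoint leaf-paths, including $H = \emptyset$ and all singleton leaf-paths; choosing a maximal such collection together with smaller ones one can extract $|E(T)|$ affinely independent vertices, but the interior-point argument is shorter.)
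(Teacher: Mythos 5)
Your argument is correct, but it takes a different route from the paper's. The paper proves full-dimensionality by exhibiting $|E|+1=2n+2$ affinely independent points of $\calP_T$: the origin together with, for each edge $e$, the point $w^e=\frac13(\um_T-\um_e)$, which one checks satisfies every triangle and perimeter inequality. You instead produce a single strictly feasible point, the constant vector $\frac14\,\um_{E(T)}$, and observe that strict satisfaction of the finitely many defining inequalities yields a ball inside $\calP_T$, hence nonempty interior and full dimension. Both arguments are elementary and short; yours avoids any affine-independence computation and handles the trivial tree (a single edge with the box constraint $0\le w_e\le\frac12$) explicitly, a case where the paper's construction degenerates (there $w^e$ is the origin) and the proof tacitly assumes $T$ is nontrivial. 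Your side remarks are also accurate and close the only potential gap: in a nontrivial connected tree no edge is a component by itself, so no box constraints appear in the defining system, and nonnegativity of the coordinates is implied by the triangle inequalities; in any case the point $\frac14\,\um_{E(T)}$ would satisfy those constraints strictly as well. The paper's approach buys explicit points of $\calP_T$ (reused nowhere else, so nothing is lost), while yours is the quicker certificate of full-dimensionality.
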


\begin{proof}
  If $n$ is the number of degree~3 nodes in $T$, then we must prove that $\calP_T$ has dimension~${2n+1}$. 
  Let $E$ be the set of edges of $T$.  For an edge $e \in E$, let~$w^e = \frac13(\um_T - \um_e)$.  
  It is easy to check that $w^e \in \calP_T$ for every $e$.  Moreover, the set $\{w^e : e \in E\}$
  together with the origin forms a set of affinely independent vectors with $2n+2$ vectors. 
\end{proof}

Here is a consequence of Theorem~\ref{thm:thevertices}.

\begin{cor}\label{cor:BijectionEvenSetofLeaves}
  For every $\{1,3\}$-tree $T$, there is a bijection between vertices of $\calP_T$ and 
  subsets of the leaves of $T$ with an even number of leaves.  Thus, for every 
  $\{1,3\}$-tree~$T$ with~$m$ edges, $\calP_T$ has $2^{\frac{m+1}{2}}$ vertices.
\end{cor}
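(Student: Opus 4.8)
The plan is to combine Theorem~\ref{thm:thevertices} with Lemma~\ref{lem:PTuniqueness}. First I would record the elementary bookkeeping fact that in a nontrivial $\{1,3\}$-tree each leaf is incident to exactly one edge and each leaf-edge is incident to exactly one leaf, so the leaves of $T$ correspond bijectively to the leaf-edge set $X_T$; the trivial one-edge tree is handled separately, where $\calP_T=[0,\frac12]$ has the two vertices $0$ and $\frac12\um_T$ and the claimed count $2^{(m+1)/2}=2$ already holds. By Theorem~\ref{thm:thevertices}, the vertex set of $\calP_T$ is exactly $V_{\calP_T}=\{\frac12\um_H : H \text{ a collection of disjoint leaf-paths in } T\}$, and distinct collections $H$ yield distinct characteristic vectors, hence distinct vertices.

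Next I would introduce the map $\Phi$ sending a collection $H$ of disjoint leaf-paths to the set $X(H)\subseteq X_T$ of leaf-edges appearing in $H$, equivalently the set of leaves lying on the paths of $H$. Each leaf-path meets exactly two leaves and the paths of $H$ are pairwise disjoint, so $|X(H)|$ is even; thus $\Phi$ maps into the family of even-size subsets of $X_T$. I would then verify $\Phi$ is a bijection onto that family. For surjectivity: given an even subset $X\subseteq X_T$, Lemma~\ref{lem:PTuniqueness} produces a point $x\in\calP_T$ with all coordinates in $\{0,\frac12\}$ whose support is the edge set of a collection $H$ of disjoint leaf-paths and with $x_e=\frac12$ iff $e\in X$ for every leaf-edge $e$; hence $X(H)=X$. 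For injectivity: if $X(H)=X(H')$, then $\frac12\um_H$ and $\frac12\um_{H'}$ are both points of $\calP_T$ with coordinates in $\{0,\frac12\}$ taking the prescribed values on $X_T$, so the \emph{uniqueness} clause of Lemma~\ref{lem:PTuniqueness} forces $\frac12\um_H=\frac12\um_{H'}$, i.e.\ $H=H'$. Composing $\Phi$ with the bijection of Theorem~\ref{thm:thevertices} between $V_{\calP_T}$ and collections of disjoint leaf-paths gives the desired bijection between vertices of $\calP_T$ and even subsets of the leaves of $T$.

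For the numerical statement, a set of $\ell$ leaves has exactly $2^{\ell-1}$ subsets of even cardinality, so $\calP_T$ has $2^{\ell-1}$ vertices, where $\ell$ is the number of leaves of $T$. It then remains to express $\ell$ in terms of $m=|E(T)|$: writing $n$ for the number of internal nodes, the handshake identity gives $3n+\ell=2m$ and the tree identity gives $n+\ell=m+1$; eliminating $n$ yields $\ell=\frac{m+3}{2}$ (equivalently $m=2n+1$), so the number of vertices is $2^{\ell-1}=2^{(m+1)/2}$.

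Since every ingredient is already available, I do not expect a genuine obstacle here: the proof is essentially a packaging of Theorem~\ref{thm:thevertices} and Lemma~\ref{lem:PTuniqueness}. The two points requiring slight care are the identification of leaves with leaf-edges (together with the degenerate trivial tree), and the fact that injectivity of $\Phi$ uses the uniqueness half of Lemma~\ref{lem:PTuniqueness}, not merely its existence half.
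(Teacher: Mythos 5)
Your proof is correct and takes essentially the same route as the paper: identify the vertices of $\calP_T$ with collections of disjoint leaf-paths via Theorem~\ref{thm:thevertices}, biject these with the even subsets of leaves, and count $2^{\ell-1}$ such subsets with $\ell=\frac{m+3}{2}$. The only cosmetic difference is that you justify existence and uniqueness of the leaf-pairing through Lemma~\ref{lem:PTuniqueness}, whereas the paper asserts this tree fact directly, so your version is a slightly more explicit packaging of the same argument.
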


\begin{proof}
  Let $S$ denote an arbitrary subset of the leaves of $T$ such that $|S|$ is even. 
  As~$T$ is a tree, each such $S$ corresponds to exactly one collection of $|S|/2$ 
  disjoint leaf-paths in~$T$ whose ends are exactly the leaves in $S$.  The converse is also true: to each collection $H$ of disjoint leaf-paths in $T$, we can associate 
  the set of ends of the paths in $H$, and this set contains only leaves, and clearly an 
  even number of them. 

  Recall that a $\{1,3\}$-tree on $m$ edges has $\ell = \frac{m+3}{2}$ leaves.
  If $\ell$ is even, then the number of subsets $S$ of the leaves with $|S|$ even 
  is half of the total number of sets of leaves in~$T$, that is, half of $2^{\ell}$. 
  If $\ell$ is odd, then the number of subsets $S$ of the leaves with $|S|$ even 
  is also half of the total number of sets of leaves in~$T$. Indeed, it is the sum 
  of $\binom{\ell}{2i}$ for $i = 0,1,\ldots,(\ell{-}1)/2$, which is equal to the sum of 
  $\binom{\ell-1}{i}$ for $i=0,\ldots,\ell{-}1$, that is~$2^{\ell-1} = 2^{\frac{m+1}{2}}$. 
\end{proof}

Corollary~\ref{cor:BijectionEvenSetofLeaves} implies that there is a nontrivial bijection between vertices of the polytopes of different $\{1,3\}$-trees with the same number of edges. 

The symmetric difference between sets $A$ and $B$ is the set $A \bigtriangleup B = (A \cup B) \setminus (A \cap B)$.
For a vertex $w$ of $\calP_T$, let $H_w$ denote the collection of disjoint leaf-paths such that ${w = \frac12\um_{H_w}}$. 

\begin{thm}\label{thm:PTadj}
  Let $w$ and $w'$ be two distinct vertices of $\calP_T$.
  Then $w$ and $w'$ are adjacent in the 1-skeleton of $\calP_T$ 
  if and only if $H_w \bigtriangleup H_{w'}$ is a leaf-path.
\end{thm}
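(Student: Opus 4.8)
The plan is to characterize adjacency via the standard face-theoretic criterion: two vertices $w$ and $w'$ of a polytope $\calP$ are adjacent if and only if the segment $[w,w']$ is an edge, equivalently, if and only if whenever $\frac12(w+w') = \frac12(u+v)$ for vertices $u,v$ of $\calP$, then $\{u,v\} = \{w,w'\}$; equivalently, the midpoint $\frac12(w+w')$ lies on no other chord between vertices. Since by Theorem~\ref{thm:thevertices} the vertices of $\calP_T$ are exactly the points $\frac12\um_H$ for $H$ a collection of disjoint leaf-paths, I would translate everything to the level of these collections. Writing $H = H_w$ and $H' = H_{w'}$, the midpoint is $\frac14(\um_H + \um_{H'})$, whose $e$-coordinate is $0$, $\frac14$, or $\frac12$ according as $e$ lies in neither, exactly one, or both of $E(H), E(H')$. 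So a vertex $\frac12\um_K$ together with some partner $\frac12\um_L$ expresses this midpoint iff $\um_K + \um_L = \frac12(\um_H + \um_{H'})$, i.e.\ $E(K)\cap E(L) = E(H)\cap E(H')$ and $E(K)\bigtriangleup E(L) = E(H)\bigtriangleup E(H')$; equivalently $K$ and $L$ agree with $H,H'$ on the ``common'' edges and partition the ``symmetric difference'' edge set into two disjoint-leaf-path collections. Thus $w,w'$ are \emph{non}-adjacent precisely when the edge set $E(H)\bigtriangleup E(H')$, together with the forced common part, admits a second splitting into two collections of disjoint leaf-paths.

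Next I would analyze the structure of $H\bigtriangleup H'$ as a subgraph of the tree $T$. Each of $H$, $H'$ is a disjoint union of leaf-paths, so each has all internal-node degrees in $\{0,2\}$ and all leaf-edges present only at genuine leaves of $T$. I would show that the symmetric difference $F := E(H)\bigtriangleup E(H')$ also induces a subgraph in which every internal node of $T$ has degree $0$ or $2$ (degrees add mod $2$, and a degree-$3$ internal node cannot have odd degree in $H\bigtriangleup H'$ since its degree is even in each of $H,H'$; here one uses that a leaf-path passing through an internal node uses exactly two of its three edges, so each of $H, H'$ contributes an even number at every internal node). Hence $F$ is itself a disjoint union of paths in $T$, each of which is either a leaf-path of $T$ or a path between two internal ``branch'' points where $H$ and $H'$ diverge. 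Since $T$ is a tree, the key dichotomy is whether $F$ is \emph{connected} (a single path) or has $\ge 2$ components.

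For the forward direction (adjacent $\Rightarrow$ $H\bigtriangleup H'$ is a leaf-path): I would argue the contrapositive. If $F$ is empty then $w = w'$, excluded. If $F$ is a single path that is \emph{not} a leaf-path of $T$, its endpoints are internal nodes of $T$, but then that path has an endpoint of degree $1$ in $F$ at an internal node of $T$ — contradicting the degree-$\{0,2\}$ claim, so this case cannot occur; hence a connected $F$ is automatically a leaf-path. If $F$ has $\ge 2$ components $F_1, F_2, \dots$ (each a leaf-path of $T$, by the same degree argument applied componentwise), then ``toggling'' a single component $F_i$ — i.e.\ replacing $H$ by $H \bigtriangleup E(F_i)$ — produces a new collection $K$ of disjoint leaf-paths with $\frac12\um_K \ne w, w'$, and its complementary partner $L = H' \bigtriangleup E(F_i)$ is also a valid collection, giving a second chord through the midpoint; thus $w,w'$ are not adjacent. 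Here I must check that toggling one leaf-path component of $F$ keeps the family of paths disjoint — this uses that $F_i$, as a leaf-path of $T$, meets the rest of $H$ and $H'$ only in internal nodes, and the degree bookkeeping at those nodes is preserved; this verification is the main technical point, but it is local and straightforward given the tree structure. For the reverse direction (if $H\bigtriangleup H'$ is a leaf-path $P$, then $w,w'$ adjacent): I would show the only way to write $\frac14(\um_H+\um_{H'})$ as a midpoint of vertices is with $\{K,L\}=\{H,H'\}$. Indeed $K$ and $L$ must agree with $H$ (and $H'$) outside $P$, and on $P$ they must split $E(P)$ into two edge sets $E(P)\cap E(K)$ and $E(P)\cap E(L)$ that partition $E(P)$; but $P$ is a single leaf-path of $T$, and the only splittings of $E(P)$ into two subsets each forming (together with the fixed outside part) a disjoint-leaf-path collection are $\emptyset \cup E(P)$ and $E(P)\cup\emptyset$ — any ``interior'' splitting of the path edges leaves an internal node of $P$ with exactly one path-edge, i.e.\ total degree $1$ in $K$ at an internal node of $T$, which is impossible. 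Hence $\{K,L\}=\{H,H'\}$ and $w,w'$ are adjacent.

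The main obstacle I anticipate is the careful degree/parity bookkeeping needed to prove (i) that $H\bigtriangleup H'$ is always a disjoint union of leaf-paths of $T$, and (ii) that toggling a single such component yields again a legitimate collection of disjoint leaf-paths whose complementary collection is also legitimate — in other words, that the midpoint genuinely lies on a second chord. Everything else (the face-theoretic adjacency criterion, the reduction to combinatorics of leaf-path collections via Theorem~\ref{thm:thevertices}) is routine once this structural lemma about $H\bigtriangleup H'$ is in place.
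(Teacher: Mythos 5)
Your ``only if'' half (non-adjacency when the symmetric difference has at least two components) is sound and is in fact exactly the paper's argument: toggling one component $P_1$ produces the pair $\frac12\um_{H_w\bigtriangleup P_1}$, $\frac12\um_{H_w\bigtriangleup(P_2\cup\cdots\cup P_k)}$, a second chord through the midpoint. The genuine gap is in the ``if'' half, and it lies in the criterion you start from. It is \emph{not} true, even for 0/1-polytopes, that two vertices are adjacent whenever their midpoint is not the midpoint of another pair of vertices; the valid implication goes only one way. For instance, in $\mathrm{conv}\{(0,0,0),(1,1,0),(0,1,1),(1,0,1),(1,1,1)\}$ the vertices $(0,0,0)$ and $(1,1,1)$ are not adjacent, because $(\tfrac23,\tfrac23,\tfrac23)=\frac13\bigl[(1,1,0)+(0,1,1)+(1,0,1)\bigr]$ lies in the open segment between them, yet no other pair of vertices has midpoint $(\tfrac12,\tfrac12,\tfrac12)$. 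So showing that $\{K,L\}=\{H_w,H_{w'}\}$ is the only pair realizing the midpoint does not prove adjacency: you must exclude \emph{arbitrary} convex combinations of vertices (equivalently, show that the only vertices of $\calP_T$ agreeing with the midpoint on the coordinates where it equals $0$ or $\tfrac12$ are $w$ and $w'$, and then pass to the minimal face containing the midpoint), or else exhibit a supporting functional directly. The paper takes the second route: it writes down an explicit supporting hyperplane involving only leaf-edge coordinates, $\sum_{e\in L_w\cap L_{w'}}2x_e+\sum_{e\in L\setminus(L_w\cup L_{w'})}(1-2x_e)= |L|-2$, and uses Corollary~\ref{cor:BijectionEvenSetofLeaves} to see that exactly the two vertices $w,w'$ attain equality, so the segment is a face.

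There is also a flaw in the combinatorial sub-claim you use inside that half. You assert that the only splittings of $E(P)$ into parts $E(P)\cap E(K)$, $E(P)\cap E(L)$ compatible with disjoint-leaf-path collections are $\emptyset\cup E(P)$ and $E(P)\cup\emptyset$, because an interior split would create a degree-one internal node. This tacitly assumes $P$ is wholly contained in one of $H_w,H_{w'}$, which the hypothesis $H_w\bigtriangleup H_{w'}=P$ does not give: it only says each edge of $P$ lies in exactly one of them, and the common edges in $E(H_w)\cap E(H_{w'})$ can supply the missing degree at a switch node. Concretely, take $T$ with internal nodes $u,v$ joined by an edge $f$, leaf-edges $a,b$ at $u$ and $c,d$ at $v$, and $H_w=\{a,f,c\}$, $H_{w'}=\{b,f,c\}$; then $H_w\bigtriangleup H_{w'}$ is the leaf-path on $\{a,b\}$, the \emph{interior} splitting $\{a\}\cup\{b\}$ is the valid one (it recovers $H_w,H_{w'}$), while your two ``allowed'' splittings give edge sets $\{f,c\}$ and $\{a,b,f,c\}$, which are not collections of disjoint leaf-paths at all. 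The conclusion $\{K,L\}=\{H_w,H_{w'}\}$ does survive, but proving it requires a case analysis you do not make, and in any event it would still only establish pair-midpoint uniqueness, which, as explained above, falls short of adjacency.
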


\begin{proof}
  Note that $H_w \bigtriangleup H_{w'}$ is a disjoint collection $\{P_1,\ldots,P_k\}$ of leaf-paths, with $k \geq 1$.  
  If $k = 1$, then let $L$ denote the set of edges incident to leaves of~$T$.  
  Let $L_w$ denote the edges of $H_w$ in $L$ and $L_{w'}$ denote the edges of $H_{w'}$ in~$L$.  
  Because $k = 1$, we have that 
  \begin{equation}\label{eq:dif2}
    |L_w \bigtriangleup L_{w'}| = 2.
  \end{equation}
  The hyperplane 
  $h(x) : \sum_{e \in L_w \cap L_{w'}} 2\,x_e + \sum_{e \in L \setminus (L_w \cup L_{w'})}  (1-2\,x_e) = |L|-2$ 
  is a supporting hyperplane of $\calP_T$, with $w$ and $w'$ being the only vertices of $\calP_T$ 
  in this hyperplane.  Indeed, for every $x \in \calP_T$, 
  \begin{eqnarray}
  \sum_{e \in L_w \cap L_{w'}} \!\!\! 2\,x_e \ + \sum_{e \in L \setminus (L_w \cup L_{w'})} \!\!\! (1-2\,x_e)
   & \leq & |L_w \cap L_{w'}| + |L \setminus (L_w \cup L_{w'})| \label{eq:hyper} \\ 
   & = & |L \setminus (L_w \bigtriangleup L_{w'})| = |L|-2, \nonumber
  \end{eqnarray}
  where the last equality is due to~\eqref{eq:dif2}. 
  Also, if $x$ is a vertex of $\calP_T$, then 
  inequality~\eqref{eq:hyper} is tight if and only if the set $L_x$ of leaves of $H_x$ is
  such that $L_w \cap L_{w'} \subseteq L_x \subseteq L_w \cup L_{w'}$.  
  There are exactly only two different such sets $L_x$ with $|L_x|$ even, namely, $L_w$ and $L_w'$. 
  Therefore, by Corollary~\ref{cor:BijectionEvenSetofLeaves}, inequality~\eqref{eq:hyper} is tight 
  only for vertices $w$ and $w'$, implying that $w$ and $w'$ are adjacent in the 1-skeleton of $\calP_t$.

  If $k > 1$, then we will show that the middle point $m$ of the segment $[w,w']$
  is a convex combination of other two vertices of $\calP_T$, and therefore $w$ and $w'$ are 
  not adjacent.
  Let~$u = \um_{H_w \bigtriangleup P_1}$ and $v = \um_{H_w \bigtriangleup (P_2 \cup \cdots \cup P_k)}$.  
  Since~$H_w \bigtriangleup P_1$ and $H_w \bigtriangleup (P_2 \cup \cdots \cup P_k)$ are disjoint collections of leaf-paths, 
  $u$ and $v$ are vertices by Theorem~\ref{thm:thevertices}.  Also, $u$ and $v$ are distinct from $w$ 
  and~$w'$, as~$k > 1$.  Now it is enough to note that $m = \frac{w+w'}2 = \frac{u+v}2$. 
\end{proof}

\begin{cor}
  Let $T$ be a $\{1,3\}$-tree with $\ell$ leaves.
  Then the degree of each vertex of the polytope $\calP_T$ is $\binom{\ell}{2}$. 
\end{cor}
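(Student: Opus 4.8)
The plan is to read the degree of a vertex straight off the adjacency description in Theorem~\ref{thm:PTadj}. Fix a vertex $w$ of $\calP_T$ and let $H_w$ be the collection of disjoint leaf-paths with $w=\frac12\um_{H_w}$. For each leaf-path $P$ of $T$, I would look at the edge set $H_w\bigtriangleup P$ and first check that it is again a disjoint collection of leaf-paths: at every leaf of $T$ the unique incident edge contributes degree $0$ or $1$, while at every internal node $v$ both $H_w$ and $P$ use either $0$ or $2$ of the three edges at $v$, and when both use two of them they must share an edge by pigeonhole, so $v$ has degree $0$ or $2$ in $H_w\bigtriangleup P$; hence $H_w\bigtriangleup P$ is a forest all of whose internal nodes have even degree, that is, a disjoint union of leaf-paths. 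This is the same degree bookkeeping already used inside the proof of Theorem~\ref{thm:PTadj}. By Theorem~\ref{thm:thevertices} it follows that $w_P:=\frac12\um_{H_w\bigtriangleup P}$ is a vertex of $\calP_T$, and $w_P\neq w$ since $P$ is nonempty.

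Next I would show that $P\mapsto w_P$ is a bijection from the set of leaf-paths of $T$ onto the set of neighbours of $w$ in the $1$-skeleton of $\calP_T$. Since $H_{w_P}\bigtriangleup H_w=P$ is a single leaf-path, Theorem~\ref{thm:PTadj} gives that $w_P$ is adjacent to $w$; conversely, if $w'$ is any neighbour of $w$, then by Theorem~\ref{thm:PTadj} the set $P:=H_{w'}\bigtriangleup H_w$ is a leaf-path and $w'=\frac12\um_{H_w\bigtriangleup P}=w_P$, so the map is onto. Injectivity is immediate, because $H_w\bigtriangleup P=H_w\bigtriangleup P'$ forces $P=P'$. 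Therefore the degree of $w$ equals the number of leaf-paths of $T$, a quantity that does not depend on the chosen vertex $w$.

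It then remains to count the leaf-paths of $T$. In a tree there is a unique path joining any two distinct nodes; the path joining two distinct leaves $x,y$ of $T$ has both of its extreme edges incident to a leaf and so is a leaf-path, and conversely every leaf-path has both endpoints among the leaves of $T$ (an internal node of $T$ cannot be an endpoint of a leaf-path, since otherwise an internal node would be forced to have degree one in the corresponding internally Eulerian subgraph) and is thus the unique path between its two endpoint leaves. Hence leaf-paths are in bijection with the $2$-element subsets of the $\ell$ leaves of $T$, so there are exactly $\binom{\ell}{2}$ of them, and every vertex of $\calP_T$ has degree $\binom{\ell}{2}$.

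I do not expect a genuine obstacle here: the corollary is essentially a bookkeeping consequence of Theorem~\ref{thm:PTadj}. The only point that needs a line of justification is the closure fact that $H_w\bigtriangleup P$ is again an internally Eulerian subgraph (so that $w_P$ really is a vertex), which is exactly the parity count recalled above, together with the elementary bijection between leaf-paths and pairs of leaves of the tree.
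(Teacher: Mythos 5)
Your proof is correct and follows essentially the same route as the paper, which deduces the corollary directly from Theorem~\ref{thm:PTadj} by noting that each vertex has exactly one neighbour per leaf-path of $T$. You merely make explicit two details the paper leaves implicit — that $H_w \bigtriangleup P$ is again a disjoint collection of leaf-paths (so the neighbour is indeed a vertex by Theorem~\ref{thm:thevertices}) and that leaf-paths are in bijection with pairs of leaves, giving the count $\binom{\ell}{2}$.
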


\begin{proof}
  From Theorem~\ref{thm:PTadj}, every vertex of $\calP_T$ 
  has a neighbour for each leaf-path in $T$.
\end{proof}

\begin{question} 
  Is there a (combinatorial) characterization of the vertices and edges 
  of the 1-skeleton of $\calP_G$ for an arbitrary $\{1,3\}$-graph $G$?
\end{question}

\subsection{Symmetry of the $\{1,3\}$-trees polytopes}

We propose a family of involutive isometries of $\calP_T$ showing its high degree of symmetry.

\begin{thm}\label{thm:isometry}
  Let $T$ be a $\{1,3\}$-tree with $n$ degree 3 nodes and 
  let $H$ be a disjoint collection of leaf-paths in $T$.  
  We define the function $h_H : \IR^{2n+1} \longrightarrow \IR^{2n+1}$ by
   $$ h_H(w) =\left\{
   \begin{array}{ll}
   \frac12 - w_e & \text{if } e \in E(H),\\ 
         w_e & \text{otherwise}.\\
  \end{array}\right.$$   
  Then $h_H$ is an isometry of $\calP_T$ to itself. 
\end{thm}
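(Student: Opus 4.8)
The plan is to deduce the statement from the explicit vertex description of $\calP_T$ furnished by Theorem~\ref{thm:thevertices}. First I would record that $h_H$ is an affine map of $\IR^{2n+1} = \IR^{E}$: if $M$ denotes the diagonal matrix whose entry is $-1$ on the coordinates indexed by $E(H)$ and $+1$ on all other coordinates, then $h_H(w) = Mw + \tfrac12\um_H$. Since $M$ is orthogonal (indeed $M^2 = I$), $h_H$ is an isometry of the ambient Euclidean space, and checking coordinates one sees $h_H\circ h_H = \mathrm{id}$, so $h_H$ is an affine involution. Consequently it is enough to prove $h_H(\calP_T)\subseteq\calP_T$: the reverse inclusion then follows by applying $h_H$ again, and a distance-preserving bijection $\calP_T\to\calP_T$ is by definition an isometry of $\calP_T$. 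Because $h_H$ is affine and $\calP_T$ is the convex hull of its vertices, it suffices to check that $h_H$ sends each vertex of $\calP_T$ to a point of $\calP_T$.

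Next I would compute the effect of $h_H$ on a vertex. By Theorem~\ref{thm:thevertices}, every vertex of $\calP_T$ is $\tfrac12\um_{H'}$ for some disjoint collection $H'$ of leaf-paths of $T$. Splitting the edge set $E$ into the four classes $E(H)\cap E(H')$, $E(H)\setminus E(H')$, $E(H')\setminus E(H)$ and $E\setminus(E(H)\cup E(H'))$ and evaluating $h_H$ coordinatewise, one obtains
\[
  h_H\!\left(\tfrac12\um_{H'}\right) \;=\; \tfrac12\,\um_{H\bigtriangleup H'},
\]
where $H\bigtriangleup H'$ is the subgraph of $T$ with edge set $E(H)\bigtriangleup E(H')$. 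Thus the theorem reduces to proving that $H\bigtriangleup H'$ is again a disjoint collection of leaf-paths of $T$, which — as noted at the beginning of Section~\ref{sec:13trees} — is the same as saying that $H\bigtriangleup H'$ is internally Eulerian.

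For that final point I would argue locally. Every leaf of $T$ has degree at most $1$ in $H\bigtriangleup H'$ automatically, so only the internal nodes matter. Fix an internal node $v$ with incident edges $e_1,e_2,e_3$, and put $S := E(H)\cap\{e_1,e_2,e_3\}$ and $S' := E(H')\cap\{e_1,e_2,e_3\}$; since $H$ and $H'$ are internally Eulerian, $|S|,|S'|\in\{0,2\}$, and the degree of $v$ in $H\bigtriangleup H'$ equals $|S\bigtriangleup S'|$. If one of $S,S'$ is empty this is $0$ or $2$; if $|S| = |S'| = 2$, then $S$ and $S'$ are $2$-element subsets of a $3$-element set, hence either $S = S'$ (degree $0$) or $|S\cap S'| = 1$ (degree $2$). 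In all cases the degree of $v$ in $H\bigtriangleup H'$ lies in $\{0,2\}$, so $H\bigtriangleup H'$ is internally Eulerian and $\tfrac12\um_{H\bigtriangleup H'}$ is a vertex of $\calP_T$ by Theorem~\ref{thm:thevertices}. This establishes $h_H(\calP_T)\subseteq\calP_T$ and, with the involution property, completes the proof. The only slightly delicate step is this last local case analysis confirming that the symmetric difference stays internally Eulerian; everything else is the routine affine-plus-convexity bookkeeping described above.
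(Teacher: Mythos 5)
Your proof is correct and takes essentially the same route as the paper's: write $h_H$ as an orthogonal (diagonal $\pm1$) involution plus the translation $\tfrac12\um_H$, then use Theorem~\ref{thm:thevertices} to check that each vertex $\tfrac12\um_{H'}$ is sent to $\tfrac12\um_{H\bigtriangleup H'}$, again a vertex. The only difference is that you spell out, via the local degree count at internal nodes, why $H\bigtriangleup H'$ is again a disjoint collection of leaf-paths, a fact the paper's proof asserts without detail.
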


\begin{proof} We notice that $h_H$ is an involution, and that $$h_H(w)=w \cdot B + \frac12\um_H$$
where $B$ is the $((2n+1)\times(2n+1))$-matrix where the entry $(e,e)_{1\le e\le 2n+1}$ equals~$1$ (resp.~$-1$) if $e\not\in E(H)$ (resp.\ $e\in E(H)$) and zero elsewhere. It can be checked that $det(B)=\pm 1$ and also that $B\cdot B^t = B^t\cdot B = I$ with $B^t$ the transpose of $B$ and $I$ the identity. Therefore, $B$ is a rotation matrix and thus an isometry.
The $\frac12\um_H$ translation does not affect the isometry, so $h_H$ is indeed an isometry.

Moreover, if $v$ is a vertex of $\calP_T$ then $h_H(v)$ is a vertex of $h_H(\calP_T) = \calP_T$. Indeed, if~$v$ is a vertex of $\calP_T$ then there is a collection of disjoint leaf-paths $H_v$ such that $v=\frac{1}{2}\um_{H_v}$. We notice that 
$$h_H(v) = \frac{1}{2}\um_{H \bigtriangleup H_v}$$
and, because $H \bigtriangleup H_v$ is also a collection of disjoints leaf-paths, $h_H(v)$ correspond to a vertex of $\calP_T$. Hence $h_H$ is an isometry of $\calP_T$ to itself.
\end{proof}

\begin{remark} 
Each function $h_H$ is an involution, and it can be therefore thought of as a particular {\em even permutation} on the set of vertices of $P_T$. 
\end{remark}

By Theorem~\ref{thm:thevertices}, we clearly have that $2\calP_T$ is a 0/1 polytope. In view of the above combinatorial properties, it might be reasonable to consider $2\calP_T$ as a good candidate to study different questions in connection with 0/1 polytopes.  For instance, a very basic (but difficult) problem is to count the minimal number of simplices needed to
triangulate the $d$-dimensional cube.  The following question is on the same spirit.

\begin{question} 
  Let $T$ be a $(1,3)$-tree. What is the smallest number of simplices needed to 
  triangulate $2\mathcal{P}_{T}$?
\end{question}

\subsection{Graphs with the same degree sequence}

Liu and Osserman~\cite[Remark~3.11]{LiuO2006} observed that, if $G$ and $H$ are two connected $\{1,3\}$-graphs on $n$ nodes and $m$ edges, then $G$ and $H$ have the same number of internally Eulerian subgraphs. In other words, for connected $\{1,3\}$-graphs, the number~$N_G$ depends only on the number of nodes and edges in~$G$.  Specifically, as we have mentioned just after Example~\ref{example}, if $k=m-n+1$ is the cyclomatic 
number of $G$ and $h$ is the number of leaves in $G$, then $N_G = 2^k$ if $h=0$ and $N_G = 2^{k+h-1}$ if $h>0$.

We observe that, if $G$ and $H$ are two connected graphs on $n$ nodes and the same degree sequence, then $G$ and $H$ have the same number of internally Eulerian subgraphs, that is, $N_G=N_H$.  Indeed, a previous result~\cite[Theorem~1]{FernandesPRAR2020} states that $G$ can be transformed into $H$ by a series of NNIs.  Specifically, an NNI move preserves the degree sequence of the graph.  The discussion that precedes Lemma~\ref{claim:NNI} establishes for connected $\{1,3\}$-graphs that an NNI move preserves (internally) Eulerian subgraphs; but NNI moves also preserve (internally) Eulerian subgraphs in connected graphs with the same degree sequence.  Thus the NNIs transforming $G$ into $H$ naturally induce a bijection between (internally) Eulerian subgraphs of~$G$ and~$H$.  From the proof of~\cite[Theorem~1]{FernandesPRAR2020}, we may strengthen Liu and Osserman's remark as follows.

\begin{remark}
  Let $G$ be a connected graph with $n$ nodes and $m$ edges. 
  Let $h$ be the number of leaves of $G$ and 
  $k=m-n+1$ be the cyclomatic number of $G$. 
  Then the number $N_G$ of internally Eulerian subgraphs of $G$ is $2^k$ 
  if~$h=0$ and $2^{k+h-1}$ if $h>0$.
\end{remark}

This leads also to a purely combinatorial proof (instead of a linear 
algebraic approach) that the number of Eulerian subgraphs of a connected 
graph with $n$ nodes and $m$ edges is $2^k$ where $k=m-n+1$ is the 
cyclomatic number of the graph~\cite[Theorem~1.9.6]{Diestel1997}. 

\subsection{Nonintersecting closed curves}

A \emph{3-regular hypergraph} is a pair $H = (V,E)$ where $V$ is the set of vertices of $H$ and each element of $E$ is a {\em hyperedge}, and consists of exactly three elements of $V$. 
Let $a$, $b$, and $c$ be the three vertices in a hyperedge $e \in E$. 
Let $w_a$, $w_b$, and $w_c$ be variables satisfying the following system of linear 
inequalities, which we refer to as~$S^{H}_t(e)$:
\begin{eqnarray}
  w_a & \leq & w_b + w_c \nonumber\\
  w_b & \leq & w_a + w_c \nonumber\\
  w_c & \leq & w_a + w_b \nonumber\\
  w_a + w_b + w_c & \leq & t\,. \label{eq:paritynoncross}
\end{eqnarray}

Let~$S^{H}_t$ be the union of all the linear systems $S^H_t(e)$ taken over all hyperedges~$e$
of~$H$ and let $\calP_H$ be the set consisting of all real solutions to this linear system 
when $t=1$. Because of the constraints~\eqref{eq:paritynoncross}, 
$\calP_H$ turns out to be a polytope. 

Given a cubic graph $G$, we can naturally associate a 3-regular hypergraph $H_G$ 
having as vertices the set of edges of $G$ and each hyperedge is given by the edges 
incident to a vertex of $G$. In this case, we have that $\calP_{H_G}=\calP_G$. 

For each hyperedge $e$ of a hypergraph $H$, let us consider an auxiliary variable~$z_e$ and 
substitute~\eqref{eq:paritynoncross} in each system~$S^H_t(e)$ by the~\emph{parity constraint}:
\begin{eqnarray*}
  w_a + w_b + w_c & = & 2\,z_e  \\
             z_e & \leq & t \, . \nonumber
\end{eqnarray*}

\newcommand{\PS}{\bar{S}}

Let $\PS^{H}_t$ be the union of all these modified linear systems, taken over all hyperedges~$e$ of~$H$. 
The polytope~$\calQ_H$ consists of all real solutions to this linear system when $t=1$. We notice that $\calQ_{H_G}=\calQ_{G}$ for every cubic graph~$G$.

Let $\calT$ be a triangulation of a 3-manifold. Let  $H_{\calT} = (V,E)$ be the hypergraph having as set $V$ of vertices the edges of $\calT$ and the set $E$ of hyperedges are the 3-sets corresponding to the triangles which are faces of the tetrahedra used in $\calT$. We notice that a vertex of~$H_{\calT}$ (that is, an edge of $\calT$) could belong to more than two hyperedges (that is, the corresponding edge is shared by two or more tetrahedra in $\calT$).  Maria and Spreer~\cite{MariaS2016} studied the notion of \emph{admissible colourings} of the edges of $\calT$ with $r-1$ colours, that correspond to integer solutions of the linear system~$\PS^{H_{\calT}}_t$ for $t=r-2$, and thus correspond to integer points in the dilated polytope~$t\calQ_{H_{\calT}}$.  They interpret each admissible colouring as a surface embedded in the triangulated 3-manifold and use this to derive better algorithms to compute Turaev-Viro invariants of degree~$4$ for the 3-manifold.
Note that Maria and Spreer~\cite[Section~2.3]{MariaS2016} defined a reduction of an admissible colouring that is an admissible colouring with two colours, that is, $r=3$, and they decomposed the invariants according to these reduced colourings.  This plays very much the same role as the cosets we used in our results. 

In the same spirit, we mimic the above construction of Maria and Spreer~\cite{MariaS2016} for a triangulation $T$ of the 2-sphere. 
We do so by taking $T$ as a graph embedded in the plane and by considering its dual graph $T^*$.  Note that $T^*$ is a planar cubic graph.  We shall see that the integer points in the dilated polytope $t\calQ_{T^*}$ have an intriguing geometric interpretation in terms of arrangements of pseudocircles.

A {\em pseudocircle} is a non-self-intersecting continuous closed curve in the plane.  
A \emph{$T$-arrangement of pseudocircles} is a (possibly empty) set of nonintersecting pseudocircles~$C$ on the plane 
such that \emph{(i)} $C$ intersects $T$ transversally in the interior of edges (not touching vertices) and \emph{(ii)} when $C$ enters into a facial triangle through an edge, it leaves the triangle through a different edge.
The \emph{order} of a $T$-arrangement is the maximum number of times 
a facial triangle of $T$ is traversed by pseudocircles in the arrangement. 


It turns out that each integer point in the rational polytope $t\calQ_{T^*}$ induces a $T$-arrangement of pseudocircles of order at most~$t$ and vice-versa.  Indeed, for each such integer point in $t\calQ_{T^*}$, 
we can construct systems of arcs in each facial triangle of~$T$ inducing such a $T$-arrangement of pseudocircles. To show this correspondence, we may proceed as follows. 

Consider a facial triangle of $T$ formed by edges $\{a,b,c\}$ 
and let $w'_a$, $w'_b$, and $w'_c$ be the values of $w_a$, $w_b$, and $w_c$ 
in the solution $\mathbf w$, respectively.
We may assume without loss of generality that $w'_a\ge w'_b \ge w'_c$.  
Draw $w'_a$ points along $a$, and similarly for $b$ and $c$.
Recall that $w'_a \leq w'_b+w'_c$.  Let $x_{ab}$ be the common vertex of $a$ and~$b$. 
If $w'_a = w'_b + w'_c$ then draw arcs joining the $w'_b$ points in $a$ closer 
to $x_{ab}$ to the points in~$b$, and draw arcs joining the remaining $w'_c$
points in $a$ to the points in $c$ (Figure~\ref{fig:configurations}(a)).
If~$w'_a < w'_b + w'_c$ then let $x_{bc}$ be the common vertex of $b$ and $c$. 
First draw arcs joining the $(w'_b+w'_c-w'_a)/2$ points in $b$ closer to $x_{bc}$ 
to the $(w'_b+w'_c-w'_a)/2$ points in $c$ closer to $x_{bc}$. 
Then draw arcs joining the remaining points in $b$ to the points in $a$ closer to $x_{ab}$,
and draw arcs joining the remaining points in $c$ to the points farther from $x_{ab}$ in $a$ (Figure~\ref{fig:configurations}(b)).
\begin{figure}[htb]
\begin{center}
\begin{tikzpicture}[LabelStyle/.style={inner sep=1pt},
  hl/.style={fill=green!30, circle, minimum size=15pt},nn/.style={}]

  \node[label={\small (a)}] at (-1,1) {};
  \node(v0) [black vertex] at (0,0) {};
  \node(v1) [black vertex] at (2,0) {};
  \node[label=above:{\small $x_{ab}$}](v2) [black vertex] at (1,1.5) {};
  \node[label={\small $c$}](c) at (1,-0.6) {};
  \node[label={\small $b$}](b) at (1.7,0.5) {};
  \node[label={\small $a$}](a) at (0.3,0.5) {};
  \Edge[color=black,lw=1.2pt](v0)(v1);
  \Edge[color=black,lw=1.2pt](v0)(v2);
  \Edge[color=black,lw=1.2pt](v1)(v2);

  \node[bb vertex](a5) at (0.310,0.45) {};
  \node[bb vertex](a4) at (0.405,0.6) {};
  \node[bb vertex](a3) at (0.5,0.75) {};
  \node[bb vertex](a1) at (0.595,0.9) {};
  \node[bb vertex](a2) at (0.690,1.05) {}; 

  \node[bb vertex](b2) at (1.432,0.87) {};
  \node[bb vertex](b1) at (1.57,0.637) {};

  \node[bb vertex](c1) at (0.77,0) {};
  \node[bb vertex](c2) at (1,0) {};
  \node[bb vertex](c3) at (1.23,0) {};

  \Edge[color=red,lw=1pt](a1)(b1);
  \Edge[color=red,lw=1pt](a2)(b2);
  \Edge[color=red,lw=1pt](a3)(c3);
  \Edge[color=red,lw=1pt](a4)(c2);
  \Edge[color=red,lw=1pt](a5)(c1);

  \node[label={\small (b)}] at (4,1) {};
  \node(v0) [black vertex] at (5,0) {};
  \node[label=right:{\small $x_{bc}$}](v1) [black vertex] at (7,0) {};
  \node[label=above:{\small $x_{ab}$}](v2) [black vertex] at (6,1.5) {};
  \node[label={\small $c$}](c) at (6,-0.6) {};
  \node[label={\small $b$}](b) at (6.7,0.5) {};
  \node[label={\small $a$}](a) at (5.3,0.5) {};
  \Edge[color=black,lw=1.2pt](v0)(v1);
  \Edge[color=black,lw=1.2pt](v0)(v2);
  \Edge[color=black,lw=1.2pt](v1)(v2);

  \node[bb vertex](a5) at (5.310,0.45) {}; 
  \node[bb vertex](a4) at (5.405,0.6) {};
  \node[bb vertex](a3) at (5.500,0.75) {};
  \node[bb vertex](a2) at (5.595,0.9) {};
  \node[bb vertex](a1) at (5.690,1.05) {}; 

  \node[bb vertex](b1) at (6.296,1.05) {};
  \node[bb vertex](b2) at (6.391,0.90) {};
  \node[bb vertex](b3) at (6.486,0.75) {};
  \node[bb vertex](b4) at (6.571,0.60) {};

  \node[bb vertex](c1) at (5.77,0) {};
  \node[bb vertex](c2) at (6,0) {};
  \node[bb vertex](c3) at (6.23,0) {};

  \Edge[color=red,lw=1pt](a1)(b1);
  \Edge[color=red,lw=1pt](a2)(b2);
  \Edge[color=red,lw=1pt](a3)(b3);
  \Edge[color=red,lw=1pt](a4)(c2);
  \Edge[color=red,lw=1pt](a5)(c1);
  \Edge[color=red,lw=1pt](b4)(c3);
\end{tikzpicture}
\end{center}
\caption{(a)~$w'_a=w'_b+w'_c$ and (b) $w'_a<w'_b+w'_c$.} 
\label{fig:configurations}
\end{figure}
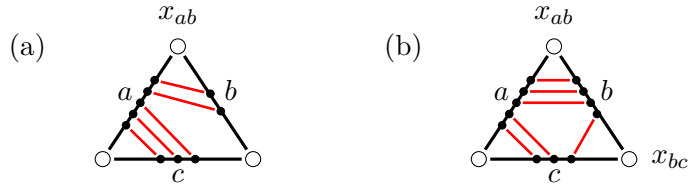

\begin{example}\label{example:curves} 
  Consider a triangulation of the 2-sphere whose corresponding graph is a~$K_4$.
  Note that the dual $K_4^*$ of a $K_4$ is isomorphic to $K_4$ (Figure~\ref{fig:pseudocircles}(a)). 
  It can be checked that
  $w_1=w_4=w_6=2, w_2=w_3=3, w_5=1, z_{e_1}=2, z_{e_2}=z_{e_3}=z_{e_4}=3$ and
  $w_1=w_2=w_3=w_4=w_5=w_6=2, z_{e_1}=z_{e_2}=z_{e_3}=z_{e_4}=3$ 
  are two integer points in $3\calQ_{K_4}$. 
  The corresponding induced nonintersecting pseudocircles are 
  illustrated in Figure~\ref{fig:pseudocircles}(b).

\begin{figure}[htb]
  \begin{minipage}[h]{0.5\textwidth}
    \scalebox{.5}{\includegraphics{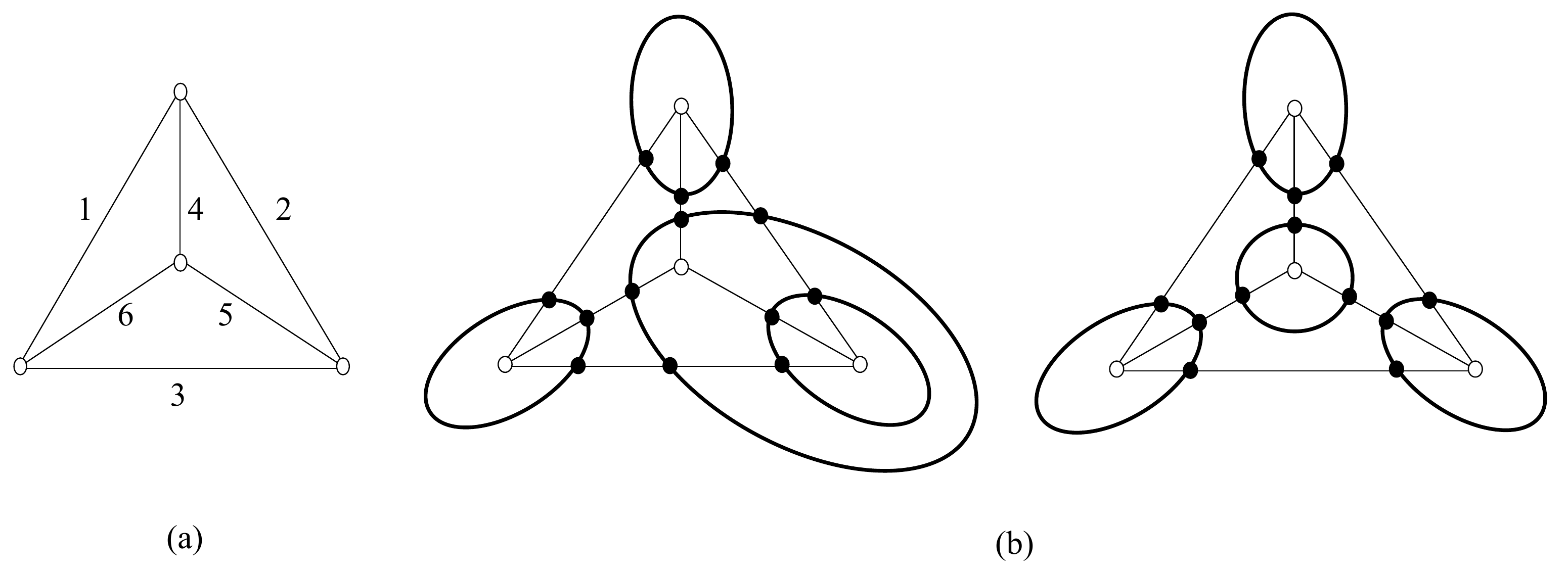}}
  \end{minipage}
  \caption{(a) $K_4$ with labeled edges, representing a triangulation of the plane; 
    (b) Two arrangements of curves of order 3.}
  \label{fig:pseudocircles}
\end{figure}
\end{example}

As a consequence of the above discussion we conclude that the number $L^{\calQ}_{T^*}(t)$ of integer points in $t\calQ_{T^*}$ is exactly the number of $T$-arrangements of pseudocircles of order at most $t$. 

\begin{lem}\label{lem:curves}
  For any triangulation $T$ of the 2-sphere, 
  the number of $T$-arrangements of pseudocircles of order at most~$t$ 
  is the number $L^{\calQ}_{T^*}(t)$ of integer points 
  in the rational polytope~$t\calQ_{T^*}$.
\end{lem}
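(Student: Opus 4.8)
The plan is to make rigorous the correspondence sketched just before the statement, promoting it to an explicit bijection. Fix a triangulation $T$ of the $2$-sphere and its dual cubic planar graph $T^*$. Recall that the edges of $T^*$ are in bijection with the edges of $T$ and the vertices of $T^*$ with the facial triangles of $T$; under these identifications a point of the ambient space is a pair $(w,z)$ assigning a real number $w_e$ to each edge $e$ of $T$ and a number $z_v$ to each facial triangle $v$. Consequently, an integer point of $t\calQ_{T^*}$ is exactly an integral assignment $w\colon E(T)\to\ZZ_{\ge 0}$ such that, for each facial triangle with edges $a,b,c$, the three triangle inequalities hold and $w_a+w_b+w_c=2z_v$ for an integer $z_v\le t$; here $z_v$ is determined by $w$, so these integer points are in bijection with the admissible assignments $w$.

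First I would build the map from integer points to arrangements. Given an admissible $w$, mark $w_e$ interior points on each edge $e$ of $T$, and in each facial triangle draw the system of pairwise non-crossing arcs joining these marked points in which no arc has both ends on the same edge, exactly as in Figure~\ref{fig:configurations}. Such a system exists precisely because the triangle inequalities together with the parity $w_a+w_b+w_c\in 2\ZZ$ force the numbers of arcs of the three types, namely $n_{ab}=(w_a+w_b-w_c)/2$, $n_{bc}=(w_b+w_c-w_a)/2$, $n_{ac}=(w_a+w_c-w_b)/2$, to be nonnegative integers; and once these type-counts are fixed the non-crossing condition determines the arcs uniquely up to isotopy. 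Gluing the arc systems of two triangles sharing an edge along that edge (matching the $i$-th point on one side to the $i$-th point on the other) produces a finite family of closed curves which is embedded in the sphere, since inside each triangle the arcs are disjoint simple arcs and at each marked point exactly two arc-ends meet and join transversally; hence we obtain a $T$-arrangement of pseudocircles. Its order equals $\max_v z_v$, because a facial triangle $v$ is traversed once per arc inside it and contains $z_v$ arcs, so the order is at most $t$.

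Next I would describe the inverse. Given a $T$-arrangement $C$ of order at most $t$, set $w_e:=|C\cap e|$ for each edge $e$ of $T$ and let $z_v$ be the number of arcs of $C$ inside the facial triangle $v$. Since condition (ii) forces every arc inside a triangle to join points on two distinct edges, in each facial triangle the number of endpoints on one edge is at most the sum of those on the other two, so the triangle inequalities hold; moreover $w_a+w_b+w_c$ equals twice the number of arcs, i.e.\ $2z_v$, and $z_v\le t$ by the order bound. Thus $(w,z)$ is an integer point of $t\calQ_{T^*}$. The two constructions are mutually inverse: starting from $(w,z)$ the arrangement produced meets each edge $e$ in $w_e$ points and has $z_v$ arcs in triangle $v$; conversely, starting from $C$, the uniqueness of the non-crossing arc system with prescribed endpoint counts shows that reconstructing the arcs from the numbers $w_e$ recovers $C$ up to the equivalence under which $T$-arrangements are counted (isotopy fixing $T$). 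Hence $L^{\calQ}_{T^*}(t)$ equals the number of $T$-arrangements of pseudocircles of order at most $t$.

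The main obstacle is the uniqueness statement used twice above: given the endpoint counts on the three sides of a facial triangle (equivalently, the type-counts $n_{ab},n_{bc},n_{ac}$), the system of pairwise non-crossing arcs with no arc internal to a single side is unique up to isotopy rel the boundary points. I would prove this by a standard ``peeling'' argument — the arcs incident to the marked points closest to a corner $x_{ab}$ must join side $a$ to side $b$, and deleting these arcs reduces to a strictly smaller instance — and it is precisely this uniqueness that makes both maps well defined and forces the correct equivalence relation on $T$-arrangements, so that the topological count is finite and matches the lattice-point count.
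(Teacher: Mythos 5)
Your proposal is correct and follows essentially the same route as the paper, which establishes the lemma exactly via the per-triangle arc construction sketched before the statement (the configurations in Figure~\ref{fig:configurations}) and the observation that the counts $w_e$ and $z_v$ can be read back off an arrangement. Your write-up merely makes explicit the inverse map and the uniqueness, up to isotopy rel $T$, of the non-crossing arc system with prescribed endpoint counts, which is the same normal-arc argument the paper leaves implicit.
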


We know by Ehrhart theory that the number of points in $t\calQ_{T^*}$ grows as a quasi-polynomial in~$t$, so that, by Lemma~\ref{lem:curves}, the number of $T$-arrangements of pseudocircles of order at most $t$ also grows as a quasi-polynomial in~$t$.  But we would like to know whether this quasi-polynomial collapses to a polynomial function of~$t$. 

\begin{question}
  Let $T$ be a triangulation of the plane and $t \geq 0$ be an integer. 
  Does the number of $T$-arrangements of pseudocircles of order $t$ grow polynomially in $t$?
\end{question}

A positive answer to the previous question would imply that $\calQ_{T^*}$ has period~1. 

\medskip 

A well-known problem in dimension 1 asks for the number of ways to construct an admissible set of $n$ parentheses for a word of length $2n$~\cite[Problem~6.19(b)]{Stanley1999}.  To put this problem into our context, this 1-dimensional counting problem is equivalent to the following question~\cite[Problem~6.19(o)]{Stanley1999}:

\begin{quote}
  \emph{What is the number of ways of connecting $2n$ points lying on a horizontal line by $n$ nonintersecting arcs, each arc connecting two of the points and lying above the points?} 
\end{quote}

\begin{figure}[htb]
  \centering
  \includegraphics[width=.95\linewidth]{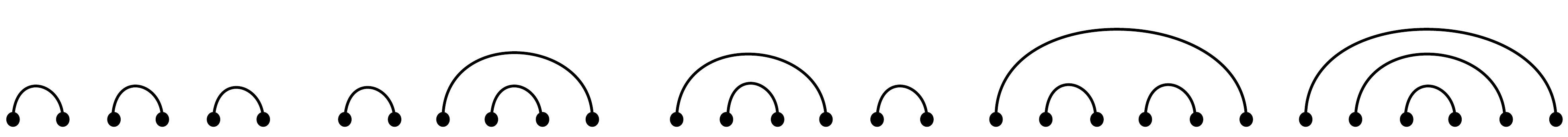}
  \caption{Five ways to connect 6 points by 3 nonintersecting arcs.}
  \label{fig:arcs}
\end{figure}

The answer to this problem is given by the Catalan numbers.  Note that each configuration in Figure~\ref{fig:arcs} naturally induces an arrangement of nonintersecting circles (by closing up arcs with their mirrors) which is closely connected to arrangements of pseudocircles.

\begin{question} 
Would the above information shed light on the understanding of $L_{\calQ_{T^*}}$?
\end{question}

\section*{Acknowledgements}

We would like to thank Arnaldo Mandel for helping us to achieve formula~\eqref{eq:recorrence}.

\bibliographystyle{plain}
\bibliography{CubicQGraphs}

\printglossaries
\end{document}